\title[Stability, well-posedness and blow-up criterion for the Incompressible Slice Model]{Stability, well-posedness and blow-up criterion for the Incompressible Slice Model}
\author[D.~Alonso-Or\'an]{Diego Alonso-Or\'an}
\address{Instituto de Ciencias Matem\'aticas CSIC-UAM-UC3M-UCM, 28049 Madrid, Spain.}
\email{diego.alonso@icmat.es}
\author[A.~Bethencourt de Le\'on]{Aythami Bethencourt de Le\'on}
\address{Department of Mathematics, Imperial College, London SW7 2AZ, UK. }
\email{ab1113@ic.ac.uk}
\theoremstyle{plain}
\newtheorem{theorem}{Theorem}[section]
\newtheorem{definition}[theorem]{Definition}
\newtheorem{lemma}[theorem]{Lemma}
\newtheorem{proposition}[theorem]{Proposition}
\newtheorem{corollary}[theorem]{Corollary}
\theoremstyle{definition}
\newtheorem{remark}[theorem]{Remark}
\def\tilde{\widetilde}
\numberwithin{equation}{section}
\renewcommand\hat{\widehat}
\DeclareMathOperator{\diff}{d\!}
\newcommand*\circled[1]{\tikz[baseline=(char.base)]{
            \node[shape=circle,draw,inner sep=2pt] (char) {#1};}}
\newcommand{\norm}[1]{\left\lVert#1\right\rVert}    
\newcommand\abs[1]{\left|#1\right|}    
\newcommand\restr[2]{{
  \left.\kern-\nulldelimiterspace 
  #1 
  \vphantom{\big|} 
  \right|_{#2} 
  }}
\begin{document}

\begin{abstract}
In atmospheric science, slice models are frequently used to study the behaviour of weather, and specifically the formation of atmospheric fronts, whose prediction is fundamental in meteorology. In 2013, Cotter and Holm introduced a new slice model, which they formulated using Hamilton's variational principle, modified for this purpose. In this paper, we show the local existence and uniqueness of strong solutions of the related ISM (Incompressible Slice Model). The ISM is a modified version of the Cotter-Holm Slice Model (CHSM) that we have obtained by adapting the Lagrangian function in Hamilton's principle for CHSM to the Euler-Boussinesq Eady incompressible case. Besides proving local existence and uniqueness, in this paper we also construct a blow-up criterion for the ISM, and study Arnold's stability around a restricted class of equilibrium solutions. These results establish the potential applicability of the ISM equations in physically meaningful situations.  \hfill \\\\ \today
\end{abstract}

\maketitle

\setcounter{tocdepth}{1}
\tableofcontents
\section{Introduction}
The Cotter-Holm Slice Model (CHSM) was introduced in \cite{CoHo2013} for oceanic and atmospheric fluid motions taking place in a vertical slice domain $\Omega \subset \mathbb{R}^2$, with smooth boundary $\partial \Omega$. The fluid motion in the vertical slice is coupled dynamically to the flow velocity transverse to the slice, which is assumed to vary linearly with distance normal to the slice. This assumption about the transverse flow through the vertical slice in the CHSM simplifies its mathematics while still capturing an important aspect of the 3D flow. The slice framework has been designed for the study of weather fronts; see, e.g., \cite{Ya}.
\\\\
In \cite{HosBre}, fronts were described mathematically and a general theory for studying fronts was developed. The necessary assumptions made are similar to the ones in CHSM. In general, slice models are used to study front formation with geostrophic balance in the cross-front direction. This assumption simplifies the analysis by formulating the dynamics in a two-dimensional slice, while still providing some realistic results.
\\\\
Front formation is directly related to baroclinic instability. Eady considered a classical model in 1949 (c.f. \cite{Eady}) in order to study the effects of baroclinic instability. Decades of observation have concluded that the most important source of synoptic scale variations in the atmosphere is due to the so called baroclinic instability. In \cite{BaWiHoFe}, this is linked to frontal systems and it is shown to trigger the formation of eddies in the North Sea.
\\\\
The Incompressible Slice Model (ISM) treated here comprises a particular case of the CHSM which is known as the Euler-Boussinesq Eady Slice Model. It should be emphasised that the ISM is potentially useful in numerical simulations of fronts. For instance, since the domain consists of a two-dimensional slice, computer simulations of ISM take much less time to run than a full three-dimensional model. Therefore, ISM is potentially useful in parameter studies for numerical weather predictions. There have been many studies on this kind of idealised models to predict and examine the formation and evolution of weather fronts (c.f. \cite{Ya}, \cite{NaHe}, \cite{Budd}, \cite{Vis}, \cite{Visram}). 
\\\\
The ISM resembles the standard 2D Boussinesq equations, which are commonly used to model large scale atmospheric and oceanic flows that are responsible for cold fronts and the jet stream \cite{Pedlosky}. The Boussinesq equations have been widely studied and considerable attention has been dedicated recently to their well-posedness and regularity  \cite{CanDiBenn}, \cite{Chae}, \cite{HouLi}. However, the fundamental question of whether their classical solutions blow up in finite time remains open. This problem is even discussed in Yudovich's ``eleven great problems of mathematical hydrodynamics" \cite{Yudo03}. Important progress in the global regularity problem has been made by Luo and Hou \cite{LuoHou1}, \cite{LuoHou2}, who have produced strong numerical evidence that smooth solutions of the 3D axisymmetric Euler equation system, which can be identified with the inviscid 2D Boussinesq equation, develop a singularity in finite time when the fluid domain has a solid boundary. Recently, Elgindi and Jeong \cite{ElgJeo} have shown finite-time singularity formation for strong solutions of the 2D Boussinesq system when the fluid domain is a sector of angle less than $\pi$.
\\\\
The goal of this paper is threefold: first, we provide a characterisation of a restricted class of equilibrium solutions of the ISM, and study the model's stability of solutions around it. This is performed by employing a modification of Arnold's techniques \cite{HoMaRaWe1985,Arnold78}. We also establish the local well-posedness of the ISM in Sobolev spaces, and provide a blow-up criterion for it. These are fundamental questions regarding the physical validity of these equations, which this paper answers in the affirmative.
\subsection*{Main results of the paper}
The ISM evolution equations for fluid velocity components $u_{S}(x,z,t):\Omega\subset\mathbb{R}^{2}\times\mathbb{R}^{+}\to\mathbb{R}^{2},$ scalar $u_{T}(x,z,t):\Omega\subset\mathbb{R}^{2}\times\mathbb{R}^{+}\to\mathbb{R}$ transverse to the slice, as well as the potential temperature $\theta_{S}(x,z,t):\Omega\subset\mathbb{R}^{2}\times\mathbb{R}^{+}\to\mathbb{R}$, are given by
\begin{subequations}\label{intialequations}
\begin{align}
\label{intro:Ism-eq:1} \partial_t u_S + u_S\cdot\nabla u_S 
-fu_T{\hat{x}}
&= -\nabla p 
 + \frac{g}{\theta_0}\theta_S\hat{z},  \\
\label{intro:Ism-eq:2} \partial_t u_T + u_S\cdot\nabla u_T 
+ fu_S\cdot\hat{{x}}
&=   -\frac{g}{\theta_0}zs,  \\
\label{intro:Ism-eq:3}\partial_t \theta_S + u_S\cdot\nabla\theta_S + u_T{s}  &= 0, \\
\label{intro:Ism-eq:4} \nabla\cdot u_S &=  0.
\end{align}
\end{subequations}
Here $g$ is the acceleration due to gravity, $\theta_{0}$ is the reference temperature, 	$f$ is the Coriolis parameter, which is assumed to be a constant, and $s$ is a constant which measures the variation of the potential temperature in the transverse direction. In these equations,  $\nabla$ denotes the 2D gradient in the slice, $p$ is the pressure obtained from incompressibility of the flow in the slice $(\nabla\cdot u_{S} =0)$, while $\hat{x}$ and $\hat{z}$ denote horizontal and vertical unit vectors in the slice. The flow is taken to be tangent to the boundary, so that
\begin{equation}\label{intro:ISM-bdy}
u_{S} \cdot n = 0 \text{ \ on  }  \partial \Omega.
\end{equation}
Here $n$ is the outward unit normal vector to the boundary $\partial{\Omega}$. The aim of the present paper is to prove the following theorems for the ISM equations \eqref{intro:Ism-eq:1}-\eqref{intro:Ism-eq:4} with boundary condition \eqref{intro:ISM-bdy}. The first of them deals with the formal and nonlinear stability of the equilibrium solutions of the Incompressible Slice Model:
\begin{theorem}[Equilibrium solutions of the ISM] \label{uno}
A restricted class of stationary solutions of the ISM equations  \eqref{intro:Ism-eq:1}-\eqref{intro:Ism-eq:4} with boundary condition \eqref{intro:ISM-bdy} is given by critical points of the generalised Hamiltonian $$H_\Phi = \int_\Omega \left\{ \frac{1}{2} (|u_S|^2 + u_T^2)  - \gamma_S \theta_S \right\} \diff{V} + \int_\Omega \Phi(q) \diff{V} + \sum_{i=0}^n a_i \int_{\partial \Omega_i } v_S \cdot \diff s.$$
These are given by the conditions
\begin{align*}
a_i &= \Phi'(\restr{q_e}{ \partial \Omega_i}), \quad \text{for \ } i=0, \ldots, n, \\
u_{Se} &=-\text{curl}(\Phi'(q_e) \hat{y}) s, \\
u_{Te} &= \text{curl}(\Phi'(q_e) \hat{y}) \cdot \nabla \theta_{Se},\\
\gamma_S &= \text{curl}(\Phi'(q_e) \hat{y}) \cdot (\nabla u_{Te} + f\hat{x}). 
\end{align*}

Here $\gamma_S = (g/\theta_0) z,$ $v_S = s u_S - (u_T + fx) \nabla \theta_S$ is the circulation velocity in the ISM, and $q={\rm curl} (v_S) \cdot \hat{y}$ is the potential vorticity.
Moreover, $\Phi$ can be written in terms of the Bernoulli function $K$ for the stationary solution as
\begin{equation*}
\Phi(\lambda) = \lambda  \left( \int_\lambda \frac{K(t)}{t^2} \diff t + C \right).
\end{equation*}
\end{theorem}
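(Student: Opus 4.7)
The plan is to follow the classical Arnold approach: because $H_\Phi$ is the sum of the conserved energy, a Casimir built from the potential vorticity $q$, and boundary terms representing conserved circulations on each component of $\partial\Omega$, its critical points on the admissible class automatically solve the stationary ISM equations. Accordingly I would compute the first variation $\delta H_\Phi$ with respect to independent perturbations $\delta u_S$ (subject to $\nabla\cdot \delta u_S = 0$ and $\delta u_S\cdot n = 0$ on $\partial\Omega$), $\delta u_T$, and $\delta\theta_S$, and show that the Euler--Lagrange equations obtained from $\delta H_\Phi = 0$ are equivalent to the four listed conditions. The chain rule applied to $v_S = s u_S - (u_T + fx)\nabla\theta_S$ yields
\begin{equation*}
\delta v_S = s\,\delta u_S - \delta u_T\,\nabla \theta_S - (u_T + fx)\,\nabla \delta\theta_S,
\end{equation*}
whence $\delta q = \mathrm{curl}(\delta v_S)\cdot \hat y$.

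The central computation is the two--dimensional integration-by-parts identity
\begin{equation*}
\int_\Omega \Phi'(q)\,\mathrm{curl}(\delta v_S)\cdot \hat y\,dV = \int_\Omega \mathrm{curl}(\Phi'(q)\hat y)\cdot \delta v_S\,dV - \sum_{i=0}^n \oint_{\partial\Omega_i} \Phi'(q)\,\delta v_S\cdot ds,
\end{equation*}
which splits the variation of the Casimir term into a bulk pairing against $\delta v_S$ and boundary line integrals. Substituting the three contributions to $\delta v_S$ and collecting the coefficients of $\delta u_S$, $\delta u_T$, and $\delta\theta_S$ in turn --- with one further integration by parts to transfer the gradient off $\delta\theta_S$ --- yields three bulk Euler--Lagrange equations. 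Since $\delta u_S$ is constrained to be divergence-free and tangent to $\partial\Omega$, stationarity only forces the corresponding functional derivative to be a pure gradient; a standard Neumann argument using $u_{Se}\cdot n = 0$ together with the boundary matching derived below (which implies $\mathrm{curl}(\Phi'(q_e)\hat y)\cdot n = 0$ on $\partial\Omega$) makes this gradient vanish. The bulk identities then reduce to the three equations $u_{Se} = -s\,\mathrm{curl}(\Phi'(q_e)\,\hat y)$, $u_{Te} = \mathrm{curl}(\Phi'(q_e)\,\hat y)\cdot \nabla\theta_{Se}$, and $\gamma_S = \mathrm{curl}(\Phi'(q_e)\,\hat y)\cdot (\nabla u_{Te} + f\hat x)$ stated in the theorem.

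The boundary contributions require care because $\partial\Omega$ may have several connected components. Combining the boundary term produced by the integration by parts above with the explicit circulation terms in $H_\Phi$ leaves
\begin{equation*}
\sum_{i=0}^n \oint_{\partial\Omega_i}\bigl(a_i - \Phi'(q_e)\bigr)\,\delta v_S\cdot ds = 0.
\end{equation*}
At equilibrium $q_e$ is transported by the stationary slice flow, and each boundary component $\partial\Omega_i$ is itself a streamline of $u_{Se}$, so $\Phi'(q_e)$ restricts to a single constant on each $\partial\Omega_i$. Arbitrariness of the admissible tangential trace $\delta v_S\cdot ds$ then forces the matching condition $a_i = \Phi'(q_e)|_{\partial\Omega_i}$ for every $i = 0,\ldots,n$.

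Finally, to obtain the closed form of $\Phi$ in terms of the Bernoulli function $K$, I would take the scalar product of the stationary momentum equation with $u_{Se}$ to produce a Bernoulli integral $K$ satisfying $u_{Se}\cdot \nabla K = 0$. Because $u_{Se}\cdot\nabla q_e = 0$ as well, $K$ is functionally dependent on $q_e$ along each streamline, i.e.\ $K = K(q_e)$. Combining this with $u_{Se} = -s\,\mathrm{curl}(\Phi'(q_e)\hat y)$, which exhibits $s\Phi'(q_e)$ as a stream function for the equilibrium slice flow, produces a Legendre-type algebraic identity of the form $K(\lambda) = \lambda\Phi'(\lambda) - \Phi(\lambda)$. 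Dividing by $\lambda^2$ gives $(d/d\lambda)[\Phi(\lambda)/\lambda] = K(\lambda)/\lambda^2$, and integrating recovers the stated formula. The main obstacles that I anticipate are the careful sign and orientation bookkeeping in the integrations by parts on a domain whose boundary may be disconnected, and the justification that $q_e$ (and therefore $\Phi'(q_e)$) is indeed constant on each $\partial\Omega_i$ so that the matching makes sense; the remainder of the argument is systematic vector calculus.
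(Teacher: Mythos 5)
Your proposal follows essentially the same route as the paper: the first variation of $H_\Phi$, the integration-by-parts identity based on $\operatorname{div}(A\times B)=B\cdot\operatorname{curl}A-A\cdot\operatorname{curl}B$ to pair $\Phi'(q)$ against $\delta v_S$ and produce the boundary matching $a_i=\Phi'(q_e|_{\partial\Omega_i})$, the substitution of $\delta v_S = s\,\delta u_S - \delta u_T\,\nabla\theta_S - (u_T+fx)\nabla\delta\theta_S$ to read off the three bulk conditions, and the Bernoulli function obtained by dotting the stationary momentum equation with $u_{Se}$. The only cosmetic difference is that you integrate the relation between $\Phi$ and $K$ once via the Legendre-type identity $K=\lambda\Phi'-\Phi$, whereas the paper derives the equivalent differential form $q_e\Phi''(q_e)=K'(q_e)$ and integrates twice; your explicit treatment of the divergence-free constraint on $\delta u_S$ (forcing the functional derivative to be a gradient that is then shown to vanish) is in fact slightly more careful than the paper's.
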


\begin{theorem}[Formal stability conditions for the ISM]
An equilibrium point of the ISM belonging to the restricted class specified in Theorem \ref{uno} is formally stable if
\begin{eqnarray}  \label{primacondicion}
\frac{(\hat{y} \times \nabla q_e) \cdot u_{Se}}{ |\nabla q_e|^2} >0.
\end{eqnarray}
\end{theorem}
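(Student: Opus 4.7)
The plan is to invoke Arnold's energy-Casimir method. Since $H_\Phi$ is a conserved functional of the ISM dynamics and, by Theorem \ref{uno}, the equilibrium $(u_{Se},u_{Te},\theta_{Se})$ is a critical point of $H_\Phi$, formal stability follows as soon as the second variation $\delta^2 H_\Phi$ at the equilibrium is sign-definite on the space of admissible perturbations $(\delta u_S,\delta u_T,\delta\theta_S)$, namely those satisfying $\nabla\cdot\delta u_S=0$ in $\Omega$ and $\delta u_S\cdot n=0$ on $\partial\Omega$. Definiteness of $\delta^2 H_\Phi$ produces a Lyapunov norm preserved by the flow, which yields the desired control on linearised perturbations.

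To extract this quadratic form, I would expand each term of $H_\Phi$ around the equilibrium. The kinetic part gives $\tfrac{1}{2}\int_\Omega(|\delta u_S|^2+(\delta u_T)^2)\,\diff{V}$, which is positive definite. The buoyancy term $-\int_\Omega \gamma_S\theta_S\,\diff{V}$ is linear in $\theta_S$ and has vanishing second variation, and the boundary-circulation integrals depend linearly on $v_S$ and likewise contribute nothing at second order. The Casimir $\int_\Omega \Phi(q)\,\diff{V}$, after writing $q=q_e+\delta q+\delta^2 q+\ldots$ with $q=\text{curl}(v_S)\cdot\hat{y}$ and $v_S=su_S-(u_T+fx)\nabla\theta_S$, yields
\begin{equation*}
\tfrac{1}{2}\int_\Omega \Phi''(q_e)(\delta q)^2\,\diff{V}\;+\;\int_\Omega \Phi'(q_e)\,\delta^2 q\,\diff{V}.
\end{equation*}
Using the equilibrium identities of Theorem \ref{uno} together with the prescribed boundary values $a_i=\Phi'(q_e|_{\partial\Omega_i})$ to eliminate boundary terms from the integrations by parts, the $\Phi'(q_e)\,\delta^2 q$ contribution is shown either to vanish or to be absorbed by the already-present quadratic pieces, leaving the clean expression
\begin{equation*}
\delta^2 H_\Phi \;=\; \tfrac{1}{2}\int_\Omega \bigl(|\delta u_S|^2+(\delta u_T)^2\bigr)\,\diff{V}\;+\;\tfrac{1}{2}\int_\Omega \Phi''(q_e)(\delta q)^2\,\diff{V}.
\end{equation*}

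The final step is to translate the pointwise sign of $\Phi''(q_e)$ into the geometric criterion of the statement. From the equilibrium relation $u_{Se}=-s\,\text{curl}(\Phi'(q_e)\hat{y})$ together with the slice identity $\text{curl}(\Phi'(q_e)\hat{y})=\nabla\Phi'(q_e)\times\hat{y}=-\Phi''(q_e)(\hat{y}\times\nabla q_e)$, one obtains
\begin{equation*}
\Phi''(q_e)\;=\;\frac{(\hat{y}\times\nabla q_e)\cdot u_{Se}}{s\,|\nabla q_e|^2}
\end{equation*}
at each point where $\nabla q_e\neq 0$. Hypothesis \eqref{primacondicion} therefore pins down the sign of $\Phi''(q_e)$ uniformly in $\Omega$, which renders the Casimir piece of $\delta^2 H_\Phi$ sign-definite and, combined with the sign-definite kinetic piece, establishes formal stability.

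The main technical obstacle is the bookkeeping of the $\delta^2 q$ term and of the boundary contributions produced by the integrations by parts: one must invoke the three pointwise equilibrium identities of Theorem \ref{uno} (for $u_{Se}$, $u_{Te}$ and $\gamma_S$) in unison with the prescribed circulation constants $a_i$ to ensure that all cross terms cancel and that the surviving quadratic form is indeed the one displayed above. A secondary subtlety is to identify the precise norm in which formal stability is obtained, since $\delta^2 H_\Phi$ directly controls $\delta u_S$, $\delta u_T$ and $\delta q$ rather than $\delta\theta_S$ itself.
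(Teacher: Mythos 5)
Your proposal follows essentially the same route as the paper: compute $\delta^2 H_\Phi$ at the critical point, observe that the kinetic part is positive definite and that the Casimir contributes $\int_\Omega \Phi''(q_e)(\delta q)^2\,\diff V$, and then convert the condition $\Phi''(q_e)>0$ into the geometric criterion via the equilibrium relation $u_{Se}=-s\,\mathrm{curl}(\Phi'(q_e)\hat y)$. If anything you are slightly more careful than the paper, which silently discards the $\Phi'(q_e)\,\delta^2 q$ contribution and states the conversion identity \eqref{relacionfun} without the factor of $s$ in the denominator that your (correct) computation retains; since $s$ is a constant this does not affect the sign condition when $s>0$.
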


\begin{remark}
This last result mimics the first Arnold's Theorem of formal stability for two-dimensional incompressible Euler \cite{Arnold78}, where the formal stability condition reads
$$\frac{(\hat{z} \times \nabla \omega_e) \cdot u_e}{|\nabla \omega_e|^2} >0.$$
Here $\omega_e = \text{curl}(u_e) \cdot \hat{z}.$ The extra term $q_R = - \text{curl}((u_T +fx) \nabla \theta_S)$ appearing in \eqref{primacondicion} is due to the introduction of a transverse velocity $u_{T}$ and a potential temperature $\theta_{S}$ . 
\end{remark}

\begin{theorem}[Nonlinear stability conditions for the ISM]
We can define a norm $Q$ on $\mathfrak{X}(\Omega) \circled{s} \mathcal{F}(\Omega) \times \wedge^2 (\Omega)$ such that an equilibrium point of the ISM belonging to the restricted class specified in Theorem \ref{uno} is nonlinearly stable with respect to $Q$ if
$$0 < \lambda_1 \leq \frac{(\hat{y} \times \nabla q_e) \cdot u_{Se}}{ |\nabla q_e|^2} \leq \lambda_2 < \infty.$$
\end{theorem}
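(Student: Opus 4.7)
The approach I would take is a nonlinear Arnold convexity argument based on the conserved generalized Hamiltonian $H_\Phi$ from Theorem \ref{uno}. Since the equilibria in the restricted class are critical points of $H_\Phi$, the difference $\Delta H_\Phi := H_\Phi[u_S,u_T,\theta_S] - H_\Phi[u_{Se},u_{Te},\theta_{Se}]$ has no linear part in the perturbation $(\Delta u_S, \Delta u_T, \Delta \theta_S) := (u_S - u_{Se}, u_T - u_{Te}, \theta_S - \theta_{Se})$. The plan is therefore to show that $\Delta H_\Phi$ itself is equivalent (up to constants) to a suitable norm $Q$ of the perturbation, and then to conclude from the conservation of $H_\Phi$ along ISM solutions.

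First I would expand $H_\Phi$ to second order and argue that, after using the equilibrium conditions of Theorem \ref{uno} (in particular the boundary matching $a_i = \Phi'(q_e|_{\partial\Omega_i})$, which absorbs the circulation integral), the quadratic part reduces to
\begin{equation*}
\delta^2 H_\Phi = \int_\Omega \bigl(|\Delta u_S|^2 + (\Delta u_T)^2 + \Phi''(q_e)(\Delta q)^2\bigr)\,dV,
\end{equation*}
where $\Delta q = q - q_e$ is expressed through the defining relations $v_S = s u_S - (u_T + fx)\nabla \theta_S$ and $q = \mathrm{curl}(v_S)\cdot \hat y$. Next, I would link $\Phi''(q_e)$ to the ratio in the hypothesis. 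Using $\mathrm{curl}(\Phi'(q_e)\hat y) = \Phi''(q_e)\,\nabla q_e \times \hat y$ together with $\nabla q_e \perp \hat y$, the equilibrium relation $u_{Se} = -s\,\mathrm{curl}(\Phi'(q_e)\hat y)$ yields
\begin{equation*}
s\,\Phi''(q_e) = \frac{(\hat y \times \nabla q_e)\cdot u_{Se}}{|\nabla q_e|^2}.
\end{equation*}
Therefore the hypothesis of the theorem is exactly the uniform convexity bound $\lambda_1/s \leq \Phi''(q_e) \leq \lambda_2/s$ on the range of $q$-values attained by the equilibrium.

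I would then define the norm on $\mathfrak{X}(\Omega)\circled{s}\mathcal F(\Omega)\times \wedge^2(\Omega)$ by
\begin{equation*}
Q(\Delta u_S,\Delta u_T,\Delta \theta_S)^2 := \int_\Omega \bigl(|\Delta u_S|^2 + (\Delta u_T)^2 + (\Delta q)^2\bigr)\,dV.
\end{equation*}
To pass from the formal second-variation estimate to a genuine nonlinear statement, I would invoke Arnold's convexity trick and rewrite
\begin{equation*}
\Delta H_\Phi = \int_\Omega \Bigl(\tfrac12|\Delta u_S|^2 + \tfrac12(\Delta u_T)^2 + \bigl[\Phi(q_e+\Delta q) - \Phi(q_e) - \Phi'(q_e)\Delta q\bigr]\Bigr)\,dV,
\end{equation*}
after absorbing the linear terms through integration by parts and the equilibrium conditions. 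The integral-remainder formula $\Phi(q_e+\Delta q) - \Phi(q_e) - \Phi'(q_e)\Delta q = \int_0^1(1-t)\Phi''(q_e + t\Delta q)(\Delta q)^2\,dt$ together with the two-sided bound on $\Phi''$ then gives $c_1 Q^2 \leq \Delta H_\Phi \leq c_2 Q^2$. Conservation of $H_\Phi$ along the ISM flow (which follows from its variational origin in \cite{CoHo2013}) finally produces $Q(t)^2 \leq (c_2/c_1)\,Q(0)^2$, i.e.\ nonlinear stability with respect to $Q$.

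The hard part will be ensuring that the uniform convexity bound on $\Phi''$ remains valid along the perturbed trajectory, since the mean-value estimate only applies on intervals where $\Phi''$ stays between $\lambda_1/s$ and $\lambda_2/s$. This will require a bootstrap/continuity argument: starting from initial data that is $Q$-small, the $\Delta H_\Phi$ control yields an $L^2$ bound on $\Delta q$, which must be upgraded to an $L^\infty$ bound keeping $q(t)$ inside the admissible window for $\Phi''$. This upgrade relies on the transport-like structure of the $q$-equation derived from \eqref{intro:Ism-eq:1}--\eqref{intro:Ism-eq:4} and is a standard but technical feature of Arnold's second theorem and its generalizations in \cite{HoMaRaWe1985}.
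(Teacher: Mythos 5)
Your proposal is correct and follows essentially the same Energy--Casimir route as the paper: the same norm built from $\int_\Omega\bigl(|\Delta u_S|^2+(\Delta u_T)^2+(\Delta q)^2\bigr)\diff V$, the same identification of $\Phi''(q_e)$ with the ratio $(\hat{y}\times\nabla q_e)\cdot u_{Se}/|\nabla q_e|^2$ coming from Theorem \ref{uno}, and the same convexity sandwich of $H_\Phi(u(0))-H_\Phi(u_e)$ between multiples of the norm, followed by conservation of $H_\Phi$. The only divergence is your closing bootstrap concern: the paper sidesteps it by requiring $\lambda_1\le\Phi''(x)\le\lambda_2$ for \emph{all} $x\in\mathbb{R}$ rather than only on the range of $q_e$, which is precisely the global two-sided bound your integral-remainder formula needs, so no continuity argument on $q(t)$ is required.
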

Next, let us state the result which establishes the well-posedness of the system:
\begin{theorem}[Local well-posedness of the ISM]
For $s > 2$ integer and initial data $(u_{S}^{0},u_{T}^{0},\theta_{S}^{0})\in H^{s}_\star(\Omega) \times H^{s} (\Omega) \times H^{s}(\Omega)$, there exists a time $T=T(\norm{(u_{S}^{0},u_{T}^{0},\theta_{S}^{0})}_{H^{s}})>0$ such that the ISM equations \eqref{intro:Ism-eq:1}-\eqref{intro:Ism-eq:4} with boundary condition \eqref{intro:ISM-bdy} have a unique solution $(u_{S},u_{T},\theta_{S})$ in $C([0,T]; H^s_\star \times H^s \times H^s).$
\end{theorem}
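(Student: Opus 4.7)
The plan is to follow the standard Kato strategy for Euler-type systems: regularize the equations, derive $\varepsilon$-uniform $H^s$ energy estimates, pass to the limit by compactness, and prove uniqueness via a low-norm estimate on the difference of two solutions. I would first mollify \eqref{intro:Ism-eq:1}--\eqref{intro:Ism-eq:4} either by adding a vanishing viscosity $\varepsilon\Delta$ with boundary conditions preserving $u_S\cdot n=0$ or by Friedrichs truncation inside the transport nonlinearities, while enforcing $\nabla\cdot u_S^\varepsilon=0$ through the Leray projection $\mathbb{P}$ adapted to $\Omega$. For each fixed $\varepsilon$, classical theory provides global smooth solutions; the task is to obtain bounds independent of $\varepsilon$ on a common time interval.

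The heart of the argument is the $H^s$ energy estimate. Applying $\partial^\alpha$ with $|\alpha|\le s$ to each equation and pairing with $\partial^\alpha u_S$, $\partial^\alpha u_T$, $\partial^\alpha \theta_S$, the pressure integrates out because $\nabla\cdot u_S=0$ and $u_S\cdot n=0$, and the two Coriolis terms $-fu_T\hat{x}$ in \eqref{intro:Ism-eq:1} and $+fu_S\cdot\hat{x}$ in \eqref{intro:Ism-eq:2} cancel at the $L^2$ level and, up to harmless commutators, at the $H^s$ level. The transport nonlinearities are handled by the Kato--Ponce commutator inequality
\begin{equation*}
\bigl\|[\partial^\alpha, u_S\cdot\nabla]v\bigr\|_{L^2}\lesssim \|\nabla u_S\|_{L^\infty}\|v\|_{H^s}+\|u_S\|_{H^s}\|\nabla v\|_{L^\infty},
\end{equation*}
combined with the Sobolev embedding $H^s(\Omega)\hookrightarrow W^{1,\infty}(\Omega)$, valid since $s>2$. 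The linear couplings $\tfrac{g}{\theta_0}\theta_S\hat{z}$, $-\tfrac{g}{\theta_0}zs$ and $u_T s$ are bounded directly using that $\Omega$ is bounded and $s$ is a fixed constant. Setting $E(t)=\|(u_S,u_T,\theta_S)(t)\|_{H^s}^2$, one is led to an inequality of the form
\begin{equation*}
\frac{d}{dt}E(t)\le C\bigl(1+\sqrt{E(t)}\bigr)E(t),
\end{equation*}
which yields a lifespan $T>0$ depending only on $E(0)$. With this $L^\infty(0,T;H^s)$ bound together with the bound on $\partial_t$ in $L^\infty(0,T;H^{s-1})$ read off from the equations, the Aubin--Lions lemma provides subsequential convergence in $C([0,T];H^{s-1})$, which is enough to pass to the limit in every nonlinear term. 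Strong $C([0,T];H^s)$ continuity is then recovered by the Bona--Smith approximation trick, and uniqueness follows from an $L^2$ energy estimate on the difference of two solutions, closed by $\|\nabla u_S\|_{L^\infty}\lesssim\|u_S\|_{H^s}$ and Gronwall.

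The principal obstacle is the simultaneous treatment of the pressure and the boundary condition $u_S\cdot n=0$ at the $H^s$ level. On $\mathbb{R}^2$ or $\mathbb{T}^2$ the Leray projector commutes with all derivatives, but on a bounded smooth domain one must solve the Neumann problem
\begin{equation*}
\Delta p=-\nabla\cdot\bigl(u_S\cdot\nabla u_S-fu_T\hat{x}+\tfrac{g}{\theta_0}\theta_S\hat{z}\bigr),\qquad \partial_n p=\bigl(\tfrac{g}{\theta_0}\theta_S\hat{z}-u_S\cdot\nabla u_S+fu_T\hat{x}\bigr)\cdot n,
\end{equation*}
and produce sharp elliptic estimates in $H^s$. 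Near $\partial\Omega$ tangential derivatives behave well but normal derivatives require care: one typically decomposes $\partial^\alpha$ into tangential and normal pieces and uses the divergence-free constraint to trade normal derivatives for tangential ones. This is precisely the role of the distinguished space $H^s_\star$ of divergence-free vector fields tangent to the boundary. Once the commutators between $\partial^\alpha$, $\mathbb{P}$ and the transport operator are shown to be bounded by $\|u_S\|_{H^s}$ times the appropriate $H^s$ norm, the scheme above closes and the theorem follows.
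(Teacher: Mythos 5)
Your overall architecture (a priori $H^s$ estimates, approximation, compactness, uniqueness by a low-norm Gr\"onwall argument) is a legitimate classical route and differs genuinely from the paper, which instead casts the Leray-projected system in the abstract framework of Kato--Lai: after adding the corrector $-\mathcal{Q}F(\mathcal{P}u_S,\mathcal{P}u_S)$ so that the constraint $\mathcal{P}u_S=u_S$ propagates automatically, the paper verifies the one-sided bound $(u,A(t,u))_H\geq-\beta(\norm{u}_H^2)$ for an admissible triplet $\{V,H,X\}$ and obtains a weak solution directly, with no regularization of the equations at all; continuity in $H^s$ is then recovered from norm convergence at $t=0$ together with a bespoke time-reversal substitution (the system is not time-reversible, so the substitution must also flip the signs of $u_T$ and $\theta_S$), rather than by Bona--Smith. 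Your energy estimates, the cancellation of the Coriolis terms, the Neumann problem for the pressure, and the uniqueness argument all match ingredients the paper actually uses (the pressure estimate appears in its blow-up section, following Temam).

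The genuine gap is at the regularization step. On a bounded domain, adding $\varepsilon\Delta u_S$ is incompatible with prescribing only $u_S\cdot n=0$: the viscous problem needs a full set of boundary conditions (no-slip or Navier-slip), and with no-slip the $H^s$ energy estimate does not close uniformly in $\varepsilon$, since integrating $\varepsilon\Delta$ by parts produces boundary terms carrying normal derivatives of $u_S$ that are not bounded independently of $\varepsilon$ because of the boundary layer; this is exactly the inviscid-limit problem, which is open in general, so it cannot be treated as a routine mollification. The alternative you mention, Friedrichs truncation of the nonlinearity, is equally problematic because convolution mollifiers preserve neither $\nabla\cdot u_S=0$ nor the tangency condition near $\partial\Omega$, and the Bona--Smith argument likewise presupposes a family of smoothing operators compatible with $H^s_\star$ that you would have to construct. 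To make your scheme work on a domain with boundary you would need a boundary-adapted approximation (Galerkin in eigenfunctions of a suitable Stokes-type operator, or Temam's elliptic regularization), or else bypass regularization entirely as the paper does via the Kato--Lai theorem.
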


In this paper we also prove a blow-up criterion for the Incompressible Slice Model, which reads as follows:
\begin{theorem}[Blow-up criterion for the ISM]   \label{blowupcriteria-intro}
Suppose that  $(u_{S}^{0},u_{T}^{0},\theta_{S}^{0})\in H^{s}_\star (\Omega)\times H^{s} (\Omega)\times H^{s} (\Omega)$ for $s>2$ an integer and that the solution $(u_{S},u_{T},\theta_{S})$ of equations \eqref{intro:Ism-eq:1}-\eqref{intro:Ism-eq:4} with boundary condition \eqref{intro:ISM-bdy} is of class $C([0,T];  H^{s}\times H^{s}\times H^{s})$. Then for $T^*< \infty,$ the following two statements are equivalent:
\begin{eqnarray}
&(i)& \quad E(t) < \infty, \quad \forall t < T^* \quad \mbox{ and } \quad \limsup_{t \to T^*} E(t) = \infty,
\label{eq:X(t)toinfty-intro} \\
&(ii)& \quad \int_0^{t} \norm{\nabla u_{S}(s)}_{L^{\infty}} \, \diff s < \infty, \quad \forall t < T^* \quad \mbox{ and } \quad \int_0^{T^*}  \norm{\nabla u_{S}(s)}_{L^{\infty}} \, \diff s = \infty,
\label{eq:utoinfty-intro}
\end{eqnarray}
where $E(t)=\norm{u_{S}}^{2}_{{H}^s}+\norm{u_{T}}^2_{{H}^s}+\norm{\theta_{S}}^{2}_{{H}^s}$. If such $T^*$ exists then $T^*$ is called the first-time blow-up and \eqref{eq:utoinfty-intro} is a blow-up criterion. 
\end{theorem}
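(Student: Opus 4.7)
The approach is a Beale--Kato--Majda type argument: first derive an a priori differential inequality of the form
\[\frac{d}{dt} E(t) \le C\bigl(1 + \norm{\nabla u_{S}(t)}_{L^\infty}\bigr) E(t) + C\, E(t)^{1/2},\]
and then deduce both directions of the equivalence using Gr\"onwall's inequality together with the Sobolev embedding $H^{s-1}(\Omega) \hookrightarrow L^\infty(\Omega)$, which is valid since $s > 2$ and $\Omega$ is two-dimensional.

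First I would perform high-order energy estimates: for each multi-index $\alpha$ with $|\alpha| \le s$, apply $D^\alpha$ to equations \eqref{intro:Ism-eq:1}--\eqref{intro:Ism-eq:3} and test them in $L^{2}(\Omega)$ against $D^\alpha u_{S}$, $D^\alpha u_{T}$, $D^\alpha \theta_{S}$ respectively. For each transport term I would use the decomposition $D^\alpha(u_{S}\cdot\nabla \phi) = u_{S}\cdot \nabla D^\alpha \phi + [D^\alpha, u_{S}\cdot \nabla]\phi$. After integration by parts, the first piece vanishes upon summation thanks to $\nabla \cdot u_{S} = 0$ and $u_{S}\cdot n = 0$ on $\partial \Omega$, while the commutator is controlled by the Kato--Ponce--Moser inequality $\norm{[D^\alpha, u_{S}\cdot\nabla]\phi}_{L^2} \le C(\norm{\nabla u_{S}}_{L^\infty} \norm{\phi}_{H^s} + \norm{\nabla \phi}_{L^\infty}\norm{u_{S}}_{H^s})$. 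The linear coupling terms $fu_{T}\hat{x}$, $fu_{S}\cdot \hat{x}$, $(g/\theta_{0})\theta_{S}\hat{z}$, and $u_{T}s$ contribute at most $CE(t)$ by Cauchy--Schwarz, and the spatially bounded forcing $-(g/\theta_{0})zs$ yields a harmless $CE(t)^{1/2}$.

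The pressure term requires more care. Taking the divergence of \eqref{intro:Ism-eq:1}, $p$ solves an elliptic Neumann problem whose right-hand side is quadratic in $u_{S}$ plus linear in $u_{T}, \theta_{S}$; the boundary datum $\partial_{n} p$ on $\partial\Omega$ can be rewritten using the identity $(u_{S}\cdot\nabla u_{S})\cdot n = -u_{S}\cdot(u_{S}\cdot \nabla n)$, which follows from $u_{S}\cdot n = 0$ and encodes only boundary curvature. Standard elliptic regularity combined with a Moser product inequality then yields $\norm{\nabla p}_{H^s} \le C(1 + \norm{u_{S}}_{L^\infty})E(t)^{1/2}$, and because $u_{S}\cdot n = 0$ the resulting term integrates cleanly against $D^\alpha u_{S}$ without uncontrolled boundary contributions. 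Summing over $|\alpha| \le s$ produces the stated differential inequality, and Gr\"onwall gives
\[E(t) \le \bigl(E(0) + Ct\bigr) \exp\!\left(C\!\int_{0}^{t} \bigl(1 + \norm{\nabla u_{S}(\tau)}_{L^\infty}\bigr) \diff \tau\right).\]

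Both directions of the equivalence now follow. For (ii)~$\Rightarrow$~(i): if the integral diverged at $T^{*}$ but $E$ remained bounded by $M$ on $[0,T^{*})$, then $\norm{\nabla u_{S}}_{L^\infty} \le C\norm{u_{S}}_{H^s} \le CM^{1/2}$ by the Sobolev embedding, contradicting the divergence. For (i)~$\Rightarrow$~(ii): if the integral stayed finite up to $T^{*}$, Gr\"onwall would keep $E$ bounded on $[0,T^{*}]$, so the local well-posedness theorem would allow continuation past $T^{*}$, contradicting $\limsup_{t\to T^{*}} E(t) = \infty$. The main obstacle I anticipate is carrying out the commutator and Moser product estimates in the bounded domain $\Omega$: on $\mathbb{R}^{2}$ or $\mathbb{T}^{2}$ these are textbook, but on a smooth domain one must work with tangential/normal derivative splittings to avoid spurious boundary terms when integrating by parts, and one must track the Neumann data in the elliptic pressure estimate so that no uncontrolled $\norm{u_{S}}_{H^s}^{2}$ factor survives --- only then does the right-hand side remain linear in $E$ and does Gr\"onwall close.
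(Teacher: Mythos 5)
Your overall architecture (high-order energy estimates with a commutator decomposition, an elliptic Neumann estimate for the pressure, then Gr\"onwall plus Sobolev embedding in both directions) matches the paper's, but there is a genuine gap in the step where you claim the differential inequality
$\frac{d}{dt}E(t)\le C(1+\norm{\nabla u_S}_{L^\infty})E(t)+CE(t)^{1/2}$.
The Kato--Ponce--Moser commutator bound you invoke, applied to the transport terms in the $u_T$ and $\theta_S$ equations, produces the cross terms
$\norm{\nabla u_T}_{L^\infty}\norm{u_S}_{H^s}\norm{u_T}_{H^s}$ and $\norm{\nabla \theta_S}_{L^\infty}\norm{u_S}_{H^s}\norm{\theta_S}_{H^s}$.
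These are \emph{not} controlled by $(1+\norm{\nabla u_S}_{L^\infty})E(t)$: if you bound $\norm{\nabla u_T}_{L^\infty}\lesssim\norm{u_T}_{H^s}$ by Sobolev embedding you get a term of order $E(t)^{3/2}$, and Gr\"onwall no longer closes under the sole hypothesis $\int_0^t\norm{\nabla u_S}_{L^\infty}\,\diff\tau<\infty$. This is precisely the obstruction that makes it nontrivial for the criterion to depend on $u_S$ alone, and your proposal does not address it.

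The paper closes this gap with an additional, essential step: taking the gradient of the $u_T$ and $\theta_S$ equations and performing $L^p$ estimates with a constant independent of $p$, then letting $p\to\infty$, to obtain
\begin{equation*}
\norm{\nabla \theta_{S}}_{L^{\infty}} + \norm{\nabla u_{T}}_{L^{\infty}} \lesssim \left(\norm{\nabla \theta^{0}_{S}}_{L^{\infty}}+\norm{\nabla u^{0}_{T}}_{L^{\infty}}\right)\exp\left( \int_{0}^{t} (\norm{\nabla u_{S}(\tau)}_{L^{\infty}}+1)\,\diff \tau\right).
\end{equation*}
This shows the problematic $L^\infty$ norms are themselves controlled by the quantity appearing in the blow-up criterion, so that the energy inequality (which genuinely contains $\norm{\nabla u_T}_{L^\infty}$ and $\norm{\nabla\theta_S}_{L^\infty}$ as coefficients) still yields boundedness of $E$ whenever $\int_0^{T^*}\norm{\nabla u_S}_{L^\infty}\,\diff\tau<\infty$. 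You would need to add this step (or an equivalent mechanism) for your argument to go through. A minor further point: your pressure bound should carry the factor $\norm{u_S}_{W^{1,\infty}}$ rather than $\norm{u_S}_{L^\infty}$, since the interior source $\sum_{i,j}D_ju_{S,i}D_iu_{S,j}$ requires one $L^\infty$ bound on $\nabla u_S$; this is harmless for the final Gr\"onwall argument but should be stated correctly.
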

\begin{remark}\rm
Theorem \ref{blowupcriteria-intro} could be used to validate whether the data from a given numerical simulation shows blow-up in finite time. Notice that the continuation criterion only depends on the velocity field $u_{S}$. This means we are able to control the scalar tracer velocity $u_{T}$ and the potential temperature $\theta_{S}$ globally in time by only obtaining a good control on $u_{S}$. 
\end{remark}
\begin{remark}\rm
Notice that we can recover the 2D incompressible Boussinesq system from the ISM \eqref{intro:Ism-eq:1}-\eqref{intro:Ism-eq:4}, by making $f,u_{T}=0$. The variable $u_{T}$ representing the transversal velocity to the slice, which naturally appears when deriving the ISM equations, gives rise to a more complex structure in the coupled system of equations. It is also worth mentioning that the conserved quantities available for the ISM are similar to the ones preserved in the 2D incompressible Boussinesq system.
\end{remark}

\subsection*{Structure of the paper} In Section \ref{2} we introduce some basic definitions and well-known lemmas about Sobolev spaces. We also include several preliminary results regarding Arnold's stability Theorem and the Kato-Lai Theorem for nonlinear evolution equations. At the end of Section \ref{2}, we establish the notation. Section \ref{1}  introduces the Cotter-Holm Slice Model by using its Lagrangian formulation, as carried out in \cite{CoHo2013}. In particular, we substitute the Euler-Boussinesq Lagrangian and derive the ISM equations, where we focus our interest throughout this manuscript. Since the Slice Model equations are Euler-Poincar\'e equations, they enjoy fundamental conservation laws (Kelvin's circulation, total energy, potential vorticity...). In Section \ref{nueva}, we characterise a class of equilibrium solutions of the ISM, and study formal and nonlinear stability around them by applying the Energy-Casimir method \cite{HoMaRaWe1985}. In Section \ref{3} we provide the first result of this paper, namely, we show the local well-posedness of the Incompressible Slice Model. In particular, we prove existence, uniqueness, and regularity of solutions, as well as their continuous dependence on the initial data. In Section \ref{4}, we construct a continuation type criterion which is well-known, for instance, for the Euler equation (see \cite{BKM84}). Finally, in Section \ref{5}, we propose some possible future research lines and comment on some open problems which are left to study.

\section{Preliminaries and basic notation}  \label{2}

Let $\Omega \subset \mathbb{R}^n$ be a bounded domain with smooth boundary $\partial \Omega$. For $\alpha\in\mathbb{N}^{n}$, $\alpha=(\alpha_{1},\alpha_{2},...,\alpha_{n}),$ and $f\in C^{\infty}(\Omega)$, we employ the multi-index notation
\[ D^\alpha f= D_{\alpha_1}D_{\alpha_2}\cdots D_{\alpha_n} f,\]
and denote $ |\alpha|=\displaystyle\sum_{i=1}^{n} |\alpha{_i}|.$ For any integer $s\in\mathbb{N}\cup\left\{ 0 \right\}$ and $p\in [1,\infty]$, we define the Sobolev norm
\[ \norm{f}_{W^{s,p}(\Omega)}= \sum_{\alpha\in\mathbb{N}^{n}: |\alpha|\leq s} \norm{D^{\alpha} f}_{L^p(\Omega)}. \]
Let us define the Sobolev space $W^{s,p}(\Omega)$ as the closure of $C^{\infty}$ functions with compact support  with respect to the norm $W^{s,p}(\Omega)$. When the spaces are $L^2$-based, they also turn out to be Hilbert and are denoted by $H^s(\Omega)=W^{s,2}(\Omega),$ with the interior product
\[ (f,g)_{H^{s}(\Omega)} = \sum_{\alpha\in\mathbb{N}^{n}: |\alpha|\leq s} \int_{\Omega}D^{\alpha} f D^{\alpha} g \ \diff V. \]
Let us introduce some important calculus inequalities \cite{BKM84}, \cite{KleMaj}:
\begin{lemma}\label{calculusineq}
\begin{minipage}[t]{\linegoal}
\begin{enumerate}[(i)]
\item \label{calculusineq:1} If $f,g\in H^{s}(\Omega)\cap C(\Omega)$, then 
\[\norm{fg}_{H^s} \leq C_{s,n} ( \norm{f}_{L^{\infty}}\norm{ D^{s} g }_{L^2} + \norm{D^{s} f}_{L^2} \norm{g}_{L^{\infty}}).\]
\item  \label{calculusineq:2} If $f\in H^{s}\cap C^{1}(\Omega)$ and $g\in H^{s-1}\cap C(\Omega)$, then for $|\alpha|\leq s,$
\[\norm{D^{\alpha}(fg)-fD^{\alpha}g}_{L^{2}} \leq C_{s,n}' ( \norm{f}_{W^{1,\infty}} \norm{g}_{H^{s-1}}+ \norm{f}_{H^s} \norm{g}_{L^{\infty}}).\]
\end{enumerate}
\end{minipage}
\end{lemma}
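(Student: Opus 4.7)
The plan is to derive both inequalities from the Leibniz rule combined with the Gagliardo--Nirenberg interpolation between $L^\infty$ and $H^s$, plus H\"older and Young. For a multi-index $\alpha$ with $|\alpha| \leq s$, Leibniz gives
\[
D^\alpha(fg) = \sum_{\beta \leq \alpha} \binom{\alpha}{\beta} D^\beta f \, D^{\alpha - \beta} g.
\]
For (i), take $|\alpha| = s$; the endpoint terms $\beta = 0$ and $\beta = \alpha$ already produce the right-hand side of (i). For the intermediate terms with $0 < |\beta| < s$, I would apply H\"older with conjugate exponents $p = 2s/|\beta|$ and $q = 2s/(s - |\beta|)$:
\[
\norm{D^\beta f \, D^{\alpha - \beta} g}_{L^2} \leq \norm{D^\beta f}_{L^p}\, \norm{D^{\alpha - \beta} g}_{L^q}.
\]
Each factor is then controlled by the Gagliardo--Nirenberg interpolation
\[
\norm{D^j h}_{L^{2s/j}} \leq C\, \norm{h}_{L^\infty}^{1 - j/s}\, \norm{D^s h}_{L^2}^{j/s}, \qquad 0 < j < s,
\]
so the bound becomes $C\, \norm{f}_{L^\infty}^{1 - |\beta|/s}\norm{D^s f}_{L^2}^{|\beta|/s}\norm{g}_{L^\infty}^{|\beta|/s}\norm{D^s g}_{L^2}^{1 - |\beta|/s}$. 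Young's inequality with the exponent pair $(s/|\beta|,\, s/(s - |\beta|))$ collapses this into a sum of $\norm{f}_{L^\infty}\norm{D^s g}_{L^2}$ and $\norm{D^s f}_{L^2}\norm{g}_{L^\infty}$; summing over $\alpha$ and disposing of the lower-order terms $|\alpha| < s$ by the same recipe (or simply H\"older plus the embedding $H^s \hookrightarrow L^\infty$) yields (i).

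For the commutator estimate (ii), the key observation is that the $\beta = 0$ term in the Leibniz expansion is precisely $f D^\alpha g$, which cancels, so
\[
D^\alpha(fg) - f D^\alpha g = \sum_{1 \leq |\beta| \leq |\alpha|} \binom{\alpha}{\beta}\, D^\beta f\, D^{\alpha - \beta} g.
\]
The top term $\beta = \alpha$ is bounded directly by $\norm{D^s f}_{L^2}\norm{g}_{L^\infty}$, feeding the second summand on the right of (ii). The bottom term with $|\beta| = 1$ is bounded by $\norm{\nabla f}_{L^\infty}\norm{D^{s - 1} g}_{L^2}$, feeding the first summand. For the intermediate terms with $1 < |\beta| < |\alpha|$, the same H\"older and Gagliardo--Nirenberg recipe as in (i) applies, except that the interpolation for $f$ is now carried out between $W^{1,\infty}$ and $H^s$ (so that one derivative is always absorbed into $\norm{\nabla f}_{L^\infty}$) and the interpolation for $g$ between $L^\infty$ and $H^{s-1}$. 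Young's inequality again collapses the mixed product into the stated sum.

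The main technical obstacle is that Gagliardo--Nirenberg is most naturally stated on $\mathbb{R}^n$, whereas the lemma is posed on a bounded domain $\Omega$ with smooth boundary. To transfer the estimate, one invokes a Stein--Calder\'on extension operator $E \colon H^s(\Omega) \to H^s(\mathbb{R}^n)$ that is simultaneously continuous from $L^\infty(\Omega)$ to $L^\infty(\mathbb{R}^n)$ (and from $W^{1,\infty}$ to $W^{1,\infty}$ when needed), applies the interpolation to $Ef$ and $Eg$ on the whole space, and restricts back; the smoothness of $\partial \Omega$ enters exactly here. Everything else---Leibniz, H\"older, Young---is elementary and dimension-free, so the proof reduces to the classical Moser and Kato--Ponce inequalities whose full details are given in the cited references \cite{BKM84,KleMaj}.
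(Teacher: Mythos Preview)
The paper does not prove this lemma at all; it merely states the inequalities and cites \cite{BKM84} and \cite{KleMaj} for the proofs. Your sketch is precisely the standard argument found in those references (Leibniz expansion, H\"older with exponents $2s/|\beta|$ and $2s/(s-|\beta|)$, Gagliardo--Nirenberg interpolation, then Young), so there is nothing to compare against and your approach is correct.

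One small caution: as written, the left-hand side of (i) is the full $H^s$ norm while the right-hand side involves only the top-order seminorm $\norm{D^s g}_{L^2}$; taken literally this cannot hold for the $|\alpha|=0$ contribution $\norm{fg}_{L^2}$, so in practice one either reads $\norm{D^s \cdot}_{L^2}$ as shorthand for the full $H^s$ norm or restricts the left side to $\norm{D^s(fg)}_{L^2}$. This is an ambiguity in the statement rather than a flaw in your argument, and the version actually used later in the paper (e.g.\ in the pressure estimate and the blow-up criterion) is consistent with either reading.
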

To estimate some boundary terms later on, we will invoke the so called Trace Theorem \cite{LioMag},  \cite{Aubin}.
\begin{theorem}\label{traceth}(Trace Theorem)
Let $u\in W^{s,p}(\Omega)$. Then there exist constants $C_{n,p,s}>0$ such that 
\[\norm{u}_{W^{s-\frac{1}{p},p}(\partial\Omega)}\leq C_{n,p,s}  \norm{u}_{W^{s,p}(\Omega)}. \]
\end{theorem}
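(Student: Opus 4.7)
The plan is to reduce the estimate on the curved domain $\Omega$ to a model trace inequality on the half-space $\mathbb{R}^{n}_{+}=\{x_n>0\}$ with trace taken on $\{x_n=0\}$, where the inequality can be obtained by direct Fourier/Besov analysis, and then reassemble the global estimate by a partition of unity. Using the smoothness of $\partial\Omega$, I cover $\overline{\Omega}$ by finitely many open sets $U_0,U_1,\ldots,U_N$ with $\overline{U_0}\subset\Omega$ and each $U_j$ (for $j\geq 1$) carrying a smooth diffeomorphism $\Psi_j\colon U_j\to B\subset\mathbb{R}^n$ that straightens $\partial\Omega\cap U_j$ into $\{y_n=0\}$. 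Take a subordinate partition of unity $\{\chi_j\}_{j=0}^{N}$ and write $u=\sum_j\chi_j u$; since $\chi_0 u$ has compact support in $\Omega$ it contributes nothing to the boundary norm. For each $j\geq 1$, set $v_j=(\chi_j u)\circ\Psi_j^{-1}\in W^{s,p}(\mathbb{R}^{n}_{+})$. The chain rule together with equivalence of Sobolev norms under smooth bi-Lipschitz changes of variables (whose constants depend only on $\Omega$) reduces the problem to establishing the model inequality
$$\|v(\cdot,0)\|_{W^{s-1/p,p}(\mathbb{R}^{n-1})}\leq C\|v\|_{W^{s,p}(\mathbb{R}^{n}_{+})}$$
for compactly supported $v$.

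For the half-space model I would first work with smooth functions $v\in\mathcal{S}(\overline{\mathbb{R}^{n}_{+}})$, where the pointwise trace $\gamma v(x')=v(x',0)$ literally makes sense, and then extend $\gamma$ to all of $W^{s,p}(\mathbb{R}^{n}_{+})$ by density. The starting identity is $v(x',0)=-\int_{0}^{\infty}\partial_{x_n}v(x',x_n)\,dx_n$, which already by H\"older's inequality yields the crude bound $\|\gamma v\|_{L^{p}(\mathbb{R}^{n-1})}\lesssim\|v\|_{W^{1,p}(\mathbb{R}^{n}_{+})}$. To capture the sharp fractional gain of exactly $1/p$ derivatives lost on the boundary, pass to the partial Fourier transform $\widehat{v}(\xi',x_n)$ in the tangential variables: the $W^{s-1/p,p}$ norm on the boundary is then a weighted norm of $\widehat{v}(\xi',0)$, and combining the same integral representation with the dyadic scaling $x_n\mapsto|\xi'|^{-1}t$ together with Minkowski/Hardy-type bounds produces precisely the correct power of $|\xi'|$ to close the estimate. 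For $p=2$ this is cleanest via Plancherel and is essentially an explicit calculation; for general $p\in(1,\infty)$ one invokes either the Besov identification $W^{s-1/p,p}(\mathbb{R}^{n-1})=B^{s-1/p}_{p,p}(\mathbb{R}^{n-1})$ together with the Littlewood--Paley description of $W^{s,p}(\mathbb{R}^{n}_{+})$, or an inductive reduction in $s$ via tangential and normal difference quotients that peels off integer derivatives until one is reduced to the base fractional case.

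Assembling the pieces yields the desired global bound
$$\|u\|_{W^{s-1/p,p}(\partial\Omega)}\leq\sum_{j=1}^{N}\|\gamma v_j\|_{W^{s-1/p,p}(\mathbb{R}^{n-1})}\leq C\sum_{j=1}^{N}\|v_j\|_{W^{s,p}(\mathbb{R}^{n}_{+})}\leq C\|u\|_{W^{s,p}(\Omega)},$$
with $C=C_{n,p,s}$ absorbing the (finite) geometric data of the charts and partition of unity. \emph{The main obstacle} is the half-space model inequality for general $p\neq 2$: extracting the precise fractional loss $1/p$ on the boundary outside the Hilbert setting requires either the Besov/Triebel--Lizorkin framework or a careful inductive argument in $s$, and at every stage one must verify that the constants produced by the changes of variables remain uniform and depend only on $n$, $p$, $s$, and the geometry of $\Omega$ rather than on the function $u$.
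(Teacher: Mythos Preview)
The paper does not prove this statement at all: the Trace Theorem is quoted in the preliminaries section as a classical result, with references to Lions--Magenes and Aubin, and is used later only as a black box (in the pressure estimate of Lemma~\ref{estimationpressure}). So there is no ``paper's own proof'' to compare against.

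That said, your sketch follows the standard textbook strategy (localise by a partition of unity, flatten the boundary, reduce to the half-space model, and handle the fractional exponent via Fourier/Besov machinery), which is precisely the route taken in the references the paper cites. The outline is sound as a sketch; the step you flag as the main obstacle --- the sharp $1/p$ loss on the half-space for $p\neq 2$ --- is indeed where the real work lies, and you correctly identify that it requires either the Besov identification $W^{s-1/p,p}=B^{s-1/p}_{p,p}$ or a real-interpolation argument rather than pure Plancherel. For the purposes of this paper, however, none of that is needed: the theorem is simply invoked, not proven.
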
\vspace{0.2cm}
Now we introduce some functional spaces used throughout the paper. Let
\[ H_{\star}^{m}=\{ u\in H^{m}(\Omega) : \ \text{div}  \ u=0, \ u\cdot n=0 \ \text{on} \  \partial \Omega \} \]
for $m \geq 1,$ and
\[ H_{\star}^{0}=\{ u\in L^{2}(\Omega) : \ \text{div} \ u=0, \ u\cdot n=0 \ \text{on} \  \partial\Omega\}. \] 

Let us also mention the Helmholtz-Hodge decomposition Theorem and some properties of the Leray's projection operator.
\begin{lemma} \label{HelmHodge}
Assume that $\Omega \subset \mathbb{R}^n$ is a bounded domain with smooth boundary $\partial \Omega$ and $w$ is a vector field defined on $\Omega$. Then we can decompose $w$ in the form
\[ w=u+\nabla p,\]
where 
\[ \text{div} \ u=0, \ \ u\cdot n=0 \ \text{on } \partial\Omega, \ \ \int_{\Omega} u\cdot \nabla p \ \diff V=0. \]
\end{lemma}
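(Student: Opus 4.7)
The plan is to reduce the decomposition to the solvability of a Neumann problem for the Laplacian: given $w$, isolate its gradient part by solving a scalar PDE, and then read off $u = w - \nabla p$ as the divergence-free, boundary-tangent remainder. This is the standard Leray-Hodge route and exploits the fact that the boundary of $\Omega$ is smooth.

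First I would look for $p$ solving
\begin{equation*}
\Delta p = \mathrm{div}\, w \quad \text{in } \Omega, \qquad \nabla p \cdot n = w \cdot n \quad \text{on } \partial \Omega.
\end{equation*}
The compatibility condition obtained by integrating the equation against the constant function $1$, namely $\int_\Omega \mathrm{div}\, w \diff V = \int_{\partial \Omega} w \cdot n \diff S$, is automatic by the divergence theorem, so there is no obstruction to solvability. To avoid imposing regularity on $w$ beyond what is strictly needed, I would work with the weak formulation: find $p \in H^1(\Omega)$ such that
\begin{equation*}
\int_\Omega \nabla p \cdot \nabla \varphi \diff V = \int_\Omega w \cdot \nabla \varphi \diff V \quad \text{for every } \varphi \in H^1(\Omega),
\end{equation*}
which by Lax-Milgram on the quotient space $H^1(\Omega)/\RR$ (with the Poincar\'e inequality for zero-mean functions providing coercivity) produces a solution, unique up to additive constants. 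If $w$ is smoother, standard elliptic regularity for the Neumann problem on a smooth bounded domain upgrades $p$ accordingly.

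Next, I would set $u := w - \nabla p$. Choosing test functions in the weak formulation shows that $u$ is $L^2$-orthogonal to every gradient, which is equivalent to $\mathrm{div}\, u = 0$ in $\Omega$ together with $u \cdot n = 0$ on $\partial \Omega$ (interpreted as an $H^{-1/2}(\partial \Omega)$ trace if $w$ is only in $L^2$). The orthogonality claim in the lemma then follows from a single integration by parts,
\begin{equation*}
\int_\Omega u \cdot \nabla p \diff V = -\int_\Omega p \, \mathrm{div}\, u \diff V + \int_{\partial \Omega} p \, (u \cdot n) \diff S = 0,
\end{equation*}
and is really the weak formulation with $\varphi = p$. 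Uniqueness of the decomposition is just as short: if $w = u_1 + \nabla p_1 = u_2 + \nabla p_2$, then $\nabla(p_1 - p_2) = u_2 - u_1$ is simultaneously a gradient and divergence-free with vanishing normal trace, and the same integration-by-parts pairing with itself forces $\nabla(p_1 - p_2) = 0$, hence $u_1 = u_2$.

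The only genuinely non-trivial step is the solvability and regularity of the Neumann problem, which is where the smoothness of $\partial \Omega$ assumed in the statement enters; everything else is algebraic manipulation via integration by parts. I do not expect any obstacle beyond citing the appropriate elliptic result from, e.g., Lions-Magenes, which is already invoked elsewhere in the paper for the trace theorem.
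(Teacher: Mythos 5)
Your proof is correct and complete: reducing the decomposition to the weak Neumann problem for $p$, obtaining solvability via Lax--Milgram on $H^1(\Omega)/\RR$, and reading off $\operatorname{div} u =0$, the vanishing normal trace, and the $L^2$-orthogonality from the variational identity is exactly the standard Leray--Hodge argument. The paper itself offers no proof of this lemma --- it is stated as the well-known Helmholtz--Hodge decomposition, with the Neumann solvability input appearing separately later as Lemma 5.2 --- so your write-up simply supplies the canonical argument the authors take for granted, and I see no gap in it.
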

The operator $\mathcal{P}:w\to u$ is called the Leray's projector and it has the following properties:
\begin{enumerate}[(i)]
\item $\text{div} \ (\mathcal{P}w)=0, \ \ (\mathcal{P}w) \cdot n=0 \ \text{on } \partial\Omega, \ \ \norm{\mathcal{P}w}_{L^2(\Omega)} \leq \norm{w}_{L^{2}(\Omega)}.$ \\
\item Set $\mathcal{Q}=1-\mathcal{P}$; this is, $\mathcal{Q}w=\nabla p$. Then if $w_{1}=u_{1}+\nabla p_{1},$ $w_{2}=u_{2}+\nabla p_{2}$, we have
\[ (\mathcal{P}w_{1},Qw_{2})_{L^2}=(u_{1},\nabla p_{2})_{L^2}=-(\text{div} \ u_{1},p_{2})_{L^2}= 0.\]
\end{enumerate} 

Among other things, we are interested in studying formal and nonlinear stability of the equilibrium solutions of the Incompressible Slice Model. We therefore introduce the method we use, which was developed in \cite{HoMaRaWe1985}. First of all, let us provide the definition of equilibrium solution:
\begin{definition}
Let $P$ be a Banach space of velocity fields $u$ and let $X:P \rightarrow P$ be an operator. This defines a dynamical system by 
\begin{equation}  \label{equilibrium}
\dot{u} = X(u).
\end{equation}
We say that a velocity profile $u_e$ is an equilibrium point of (\ref{equilibrium}) if $X(u_e)=0.$ 
\end{definition}

In a system like $(\ref{equilibrium}),$ one can study different types of stability:
 \begin{enumerate}[(i)]
 \item{Spectral stability. A system is said to be spectrally stable if the spectrum of its linearisation $DX(u_e)$ has no strictly positive real part.} \\
 \item{Linearised stability. A system is said to be linearised stable if its linearisation at the equilibrium point $u_e$ is stable. } \\
 \item{Formal stability. A system is said to be formally stable if there exists a conserved quantity such that its first variation vanishes at the equilibrium point $u_e,$ and whose second variation is definite (either positive of negative) at this point.}\\
 \item{Nonlinear stability. We say that an equilibrium solution $u_e$ is nonlinearly stable if there exists a norm $\norm{\cdot }$ and for every $\epsilon > 0$, there exists $\delta > 0$ such that if $\norm{u(0)-u_e}< \delta$ then $\norm{u(t) - u_e} < \epsilon,$ for $t > 0.$}
\end{enumerate}
\begin{remark}
It is important to study equilibrium solutions of physical systems and their stability. For instance, in the case of atmospheric models, these equilibrium solutions represent steady states of the atmospheric vector fields. Studying stability around steady states provides us with insight on whether some of these states can be destroyed in the short-term by small perturbations, or on the contrary, are expected to remain rather stable.
\end{remark}
We explain the Energy-Casimir algorithm presented in \cite{HoMaRaWe1985} for the study of formal and nonlinear stability of equilibrium solutions of a dynamical system. It consists of six steps:
\begin{enumerate}[(i)]
\item \label{stepa} Consider a system of the type (\ref{equilibrium}). The first step consists in finding an integral of motion $H$ for this system. Often, this equation can be expressed in terms of a Poisson bracket $\{\cdot, \cdot \},$ which we assume. \\
\item \label{stepb} Find a parametric family of constants of motion $C_\Phi$, where $\Phi$ belongs to some general class of functions. This is, these functions need to satisfy
$$\frac{D}{Dt} C_\Phi =0,$$
where $\dfrac{D}{Dt}$ stands for the total derivative. One way of doing this is to look for Casimirs of the Poisson bracket. These are functions $C$ such that $\left\{\cdot, C \right\} = 0.$ \\
\item \label{stepc} Construct a generalised conserved quantity
$$H_\Phi = H +  C_\Phi.$$
Impose $D H_\Phi(u_e) = 0$. This yields a condition on $ \Phi.$ \\
\item \label{stepd} Find quadratic forms $Q_1, Q_2$ on $P$ such that
\begin{align*}
Q_1(\Delta u) &\leq H(u_e +\Delta u)-H(u_e)-DH(u_e) \cdot \Delta u, \\ 
Q_2(\Delta u) &\leq C_\Phi (u_e +\Delta u)-C_\Phi (u_e)-DC_\Phi (u_e)\cdot \Delta u, 
\end{align*}
for $\Delta u \in P.$ Require that 
$$Q_1(\Delta u)+Q_2(\Delta u) > 0.$$
\item \label{stepe} Obtain the estimate
\begin{equation*}
Q_1(u(t)-u_e)+Q_2(u(t)-u_e) \leq H_\Phi (u(0))-H_\Phi(u_e),
\end{equation*}
often expressed as
\begin{equation*}
Q_1(\Delta u(t))+Q_2(\Delta u(t)) \leq H_\Phi (u(0))-H_\Phi(u_e).
\end{equation*}

\item \label{stepf} Define a norm on $P$ by
\[ 
\norm{u} = Q_1(u)+Q_2(u).
\]
\end{enumerate}

\begin{theorem}[Arnold Stability \cite{Arnold78}, \cite{HoMaRaWe1985}]
Assume steps \ref{stepa}-\ref{stepf} have been carried out. If $H_\Phi$ is continuous at $u_e$ on the norm  $\norm{\cdot}$, and solutions of \eqref{equilibrium} exist for all time, then $u_e$ is a nonlinearly stable equilibrium point.
\end{theorem}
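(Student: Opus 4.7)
The strategy is to exploit conservation of the augmented Hamiltonian $H_\Phi$ along the flow, combine this with the quadratic lower bound produced in step (iv), and then invoke continuity of $H_\Phi$ at $u_e$ to convert an initial $\norm{\cdot}$-smallness condition into a bound uniform in time. The argument is short once the six-step Energy-Casimir program has been executed; essentially every ingredient has been prepared by steps (i)--(vi).

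First I would observe that $H_\Phi = H + C_\Phi$ is an integral of motion of \eqref{equilibrium}: by step (i), $H$ is conserved by the dynamics, and by step (ii), $C_\Phi$ satisfies $\frac{D}{Dt} C_\Phi = 0$. Hence $H_\Phi(u(t)) = H_\Phi(u(0))$ for every $t$ in the interval of existence. Next I would add the two inequalities of step (iv) and use the critical-point condition $DH_\Phi(u_e) = 0$ from step (iii) to conclude that, for every admissible $\Delta u \in P$,
$$Q_1(\Delta u) + Q_2(\Delta u) \;\leq\; H_\Phi(u_e + \Delta u) - H_\Phi(u_e).$$
Applying this with $\Delta u = u(t) - u_e$ and invoking the conservation law yields the fundamental estimate
$$\norm{u(t) - u_e} \;=\; Q_1\bigl(u(t)-u_e\bigr) + Q_2\bigl(u(t)-u_e\bigr) \;\leq\; H_\Phi(u(0)) - H_\Phi(u_e),$$
which is precisely the bound announced in step (v).

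To close the argument and reproduce the $\varepsilon$--$\delta$ formulation of nonlinear stability, I would use the assumed continuity of $H_\Phi$ at $u_e$ with respect to $\norm{\cdot}$: given $\varepsilon > 0$, there exists $\delta > 0$ such that $\norm{u(0) - u_e} < \delta$ implies $|H_\Phi(u(0)) - H_\Phi(u_e)| < \varepsilon$, so that the chain of inequalities above yields $\norm{u(t) - u_e} < \varepsilon$ for all $t \geq 0$. The hypothesis of global-in-time existence is what lets $t$ range over $[0,\infty)$; without it, the same argument would yield stability only on the maximal interval of existence of the orbit starting at $u(0)$.

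I do not expect any deep obstacle in this abstract setting: all the analytic work has been pushed into verifying steps (i)--(vi) in the particular application. The only genuine subtlety is a regularity one, namely ensuring that the pair $(u(t), u_e)$ lies in the class for which the step (iv) inequalities apply and on which $H_\Phi$ is well defined; this is automatic once $P$ has been chosen so that the flow preserves it. Verifying that closure in the concrete ISM context, together with the positivity and continuity estimates for the chosen $Q_1, Q_2$ and $H_\Phi$, is where the real work lies.
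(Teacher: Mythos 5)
Your proof is correct and is precisely the standard Energy--Casimir argument: conservation of $H_\Phi$, the step (iv) bound combined with $DH_\Phi(u_e)=0$ to obtain the step (v) estimate $\norm{u(t)-u_e} \leq H_\Phi(u(0)) - H_\Phi(u_e)$, and continuity of $H_\Phi$ at $u_e$ to close the $\varepsilon$--$\delta$ loop. The paper states this theorem without proof, deferring to \cite{Arnold78} and \cite{HoMaRaWe1985}, and your reconstruction matches the argument in those references (and the one already laid out implicitly in steps \ref{stepa}--\ref{stepf}).
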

\begin{remark}A sufficient condition for the continuity of $H_\Phi$ is the existence of positive constants $C_1, C_2$ such that
\begin{align*}
H(u_e +\Delta u)-H(u_e)-DH(u_e) \cdot \Delta u &\leq C_1\norm{\Delta u}^2, \\
 C_\Phi (u_e + \Delta u)-C_\Phi (u_e)-D C_\Phi(u_e)\cdot \Delta u &\leq C_2\norm{\Delta u}^2.
\end{align*}
\end{remark}\vspace{0.5cm}
Finally, we also wish to formulate an abstract theorem by Kato and Lai \cite{KatLai} that we will use to prove our local existence and uniqueness result for the ISM equations \eqref{intro:Ism-eq:1}-\eqref{intro:Ism-eq:4} with boundary condition \eqref{intro:ISM-bdy}. Before stating the theorem, we need to introduce some definitions first. We say that a family $\{V,H,X\}$ of three real separable Banach spaces is an admissible triplet if the following conditions are met:
\begin{itemize}
\item $V\subset H\subset X$, with the inclusions being dense and continuous. \\

\item $H$ is a Hilbert space with inner product $(\cdot,\cdot)_{H}$ and norm $\norm{\cdot}_{H}=(\cdot,\cdot)^{\frac{1}{2}}_{H}$. \\
\item There is a continuous, nondegenerate bilinear form on $V\times X$, denoted by $<\cdot,\cdot>$, such that
$$ <v,u> = (v,u)_{H}, \ \ v \in V, \ u \in H.$$
\end{itemize}
\begin{remark}
Recall that a bilinear form $<\cdot,\cdot>$ is continuous if
\[ |<v,u>| \leq  C\norm{v}_{V}\norm{u}_{X}, \ \ \text{for some constant \ } C>0, v\in V, \ \text{and \ }  u\in X. \]
For nondegeneracy we need
\[ <v,u>= 0 \ \ \text{for all } \  u\in X \ \text{implies} \ v=0, \]
\[<v,u>= 0 \ \ \text{for all } \ v\in V \ \text{implies} \ u=0.\]
\end{remark}
We say that $A:[0,T]\times H\to X$ is a sequentially weakly continuous map if $A(t_{n},v_{n})\rightharpoonup A(t,v)$ in $X$ whenever $t_{n}\rightarrow t$ and $v_{n}\rightharpoonup v$ in $H$. We denote by $C_{w}([0,T];H)$ the space of sequentially weakly continuous functions from $[0,T]$ into $H$, and by $C^{1}_{w}([0,T]; X)$ the space of functions $f\in W^{1,\infty}([0,T];X)$ such that $\frac{df}{dt}\in C_{w}([0,T]; X)$. With these notions established, the existence theorem by Kato and Lai reads:
\begin{theorem}\label{abstractevolution}(\cite{KatLai})
Consider the abstract nonlinear evolution equation
\begin{equation}\label{evolution}
\begin{cases}
u_{t}+A(t,u)=0, \\
u(0)=\phi,
\end{cases}
\end{equation}
where $A(t,u)$ is a nonlinear operator. Let $\{V,H,X\}$ be an admissible triplet. Let $A$ be a weakly continuous map on $[0,T_{0}]\times H$ into $X$ such that
\begin{eqnarray} \label{condicion}
<u,A(t,u)> \ \geq -\beta(\norm{u}^{2}_{H}), \  \ \text{for} \ t\in[0,T_{0}], \ u\in V,
\end{eqnarray}
where $\beta(r)\geq 0$ is a monotone increasing function of $r \geq 0.$ Then for any $\phi\in H,$ there exists $T\in(0,T_{0})$ such that \eqref{evolution} has a solution 
\[ u\in C_{w}([0,T];H)\cap C^{1}_{w}([0,T];X). \]
Moreover,
\[ \norm{u(t)}^{2}_{H} \leq \rho(t), \ t \in [0, T], \]
where $\rho(t)$ is a continuous increasing function on $[0,T]$.
\end{theorem}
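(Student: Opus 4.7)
The plan is to establish existence via a Galerkin-type approximation combined with weak compactness, which is the standard approach for abstract evolution equations possessing a one-sided energy estimate. First I would select an increasing sequence $\{V_n\}$ of finite-dimensional subspaces of $V$ whose union is dense in $V$, and use the nondegenerate bilinear form $\langle\cdot,\cdot\rangle$ to produce projections $P_n$ onto $V_n$. This yields a sequence of approximate ODE problems
\begin{equation*}
\partial_t u_n + P_n A(t,u_n) = 0, \qquad u_n(0) = P_n\phi,
\end{equation*}
and since $V_n$ is finite-dimensional while $A$ is weakly continuous (hence strongly continuous in finite dimensions), Peano's theorem supplies local $C^1$ solutions.

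Next I would derive a uniform a priori bound. Pairing the approximate equation with $u_n$ itself and using the compatibility $\langle u_n,u_n\rangle = \norm{u_n}_H^2$ together with the hypothesis \eqref{condicion} gives
\begin{equation*}
\tfrac{1}{2}\tfrac{d}{dt}\norm{u_n(t)}_H^2 = -\langle u_n,\, A(t,u_n)\rangle \leq \beta(\norm{u_n}_H^2).
\end{equation*}
A comparison with the scalar ODE $\dot{\rho} = 2\beta(\rho)$, $\rho(0) = \norm{\phi}_H^2$, produces a time $T>0$ independent of $n$ on which each $u_n$ exists and satisfies $\norm{u_n(t)}_H^2 \leq \rho(t)$; this uniform $H$-bound, combined with the equation itself, also yields a uniform bound on $\partial_t u_n$ in $X$.

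Then I would extract a subsequence converging weak-$\ast$ in $L^\infty(0,T;H)$ to a candidate limit $u$. Equicontinuity of $\{u_n\}$ as maps into $X$, coming from the bound on $\partial_t u_n$, combined with weak compactness of balls in $H$, allows an Arzel\`a-Ascoli argument in the weak topology to yield pointwise weak convergence $u_n(t)\rightharpoonup u(t)$ in $H$ for every $t\in[0,T]$. Weak continuity of $A$ then supplies $A(t,u_n(t))\rightharpoonup A(t,u(t))$ in $X$, so one can pass to the limit in the approximate equation tested against any $v\in V$, producing a weak solution with the claimed regularity $u\in C_w([0,T];H)\cap C^1_w([0,T];X)$; the pointwise bound $\norm{u(t)}_H^2 \leq \rho(t)$ descends from weak lower semicontinuity of the norm.

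The hard part is the passage to the limit in the nonlinear term, because only weak continuity of $A$ is assumed, which by itself does not interact well with integration in time. The admissible-triplet structure is essential here: testing against $v\in V$ converts the weak convergence $u_n(t)\rightharpoonup u(t)$ in $H$ into convergence of the scalar pairings $\langle v,u_n(t)\rangle \to \langle v,u(t)\rangle$ via the compatibility $\langle v,u_n\rangle = (v,u_n)_H$, and the separability of $V$ allows the Arzel\`a-Ascoli step to be carried out on a countable dense test set before recovering the full weak convergence by density.
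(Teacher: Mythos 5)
A point of reference first: the paper offers no proof of this statement at all --- it is quoted from Kato and Lai \cite{KatLai} and used as a black box --- so the only meaningful comparison is with the original argument in that reference. Your sketch is essentially that argument: Kato and Lai likewise build Galerkin approximations on finite-dimensional subspaces $V_n\subset V$ with projections defined through the pairing by $(v,P_nw)_H=\langle v,w\rangle$ for $v\in V_n$, obtain the a priori bound by comparison with $\dot\rho=2\beta(\rho)$ exactly as you do (the crux being $(u_n,P_nA(t,u_n))_H=\langle u_n,A(t,u_n)\rangle$ because $u_n\in V_n$), and pass to the limit by an Ascoli argument in the weak topology using the sequential weak continuity of $A$. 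So the strategy is the right one, not a different route.

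One intermediate claim is unjustified as stated: that the equation yields ``a uniform bound on $\partial_t u_n$ in $X$.'' Since $\partial_t u_n=-P_nA(t,u_n)$ and the $P_n$ are built from the pairing rather than as projections bounded on $X$, there is no reason for $\norm{P_n}_{X\to X}$ to be uniformly bounded, so the uniform $X$-bound on $A(t,u_n)$ (which does hold: sequential weak continuity of $A$ plus the uniform $H$-bound gives boundedness of $A$ on bounded sets of $H$, by a uniform-boundedness argument) does not transfer to $P_nA(t,u_n)$, and equicontinuity of $u_n$ in the $X$-norm is not available. The correct equicontinuity statement --- the one Kato and Lai use --- is for the scalar functions $t\mapsto (v,u_n(t))_H$ with $v$ ranging over a countable dense subset of $\bigcup_m V_m$: for $v\in V_m$ and $n\ge m$ one has $\tfrac{d}{dt}(v,u_n)_H=-\langle v,A(t,u_n)\rangle$, bounded by $C\norm{v}_V\sup_t\norm{A(t,u_n(t))}_X$. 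Your final paragraph in fact describes exactly this repair, so the defect is one of bookkeeping rather than of substance: delete the claim about $\partial_t u_n$, run the diagonal/Ascoli argument on the scalar pairings to get $u_n(t)\rightharpoonup u(t)$ in $H$ for every $t$, and recover $u\in C^1_w([0,T];X)$ at the end from the integrated equation $u(t)=\phi-\int_0^tA(s,u(s))\,\diff s$ once $A(\cdot,u(\cdot))\in C_w([0,T];X)$ is known.
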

\begin{remark}
If $A ([0,T_0] \times V) \subset H,$ we can rewrite \eqref{condicion} in a more convenient form, namely,
$$(u, A(t,u))_H \geq - \beta (\norm{u}_H^2), \quad t \in [0,T_0], \hspace{0.1cm} u \in V.$$
\end{remark}
\begin{remark}
$T$ and $\rho$ can be determined by solving the scalar differential equation 
\begin{equation}
\begin{cases}\label{weak}
\rho_{t}=2\beta(\rho), \\
\rho(0)= \norm{\phi}^{2}_{H},
\end{cases}
\end{equation}
where $T$ is any value that ensures \eqref{weak} has a solution $\rho(t)$ in $[0,T]$. It is important to mention that Theorem \ref{abstractevolution} does not guarantee uniqueness of solutions.
\end{remark}

\subsection*{Notation} We write $(\cdot,\cdot)_{L^p}$, $(\cdot,\cdot)_{H^m}$  for the scalar product in $L^{p}(\Omega)$ and $H^{m}(\Omega)$, respectively. $\norm{\cdot}_{H^{m}}$ will represent the norm in $H^{m}(\Omega)$. We shall use $\norm{\cdot}_{L^p}$ to denote the norm of a function in $L^{p}(\Omega)$. We write $|f|$ for the absolute value of $f$. We might use $L^{p},H^s$ throughout the paper meaning $L^{p}(\Omega)$ and $H^{s}(\Omega)$, respectively, when the domain $\Omega$ is implicit from the context. We use $\int$  to denote integration in space over $\Omega$. $a\lesssim b$ means there exists $C$ such that $a\leq  C b$, where $C$ is a positive universal constant that may depend on fixed parameters, constant quantities, and the domain itself. Note also that this constant might differ from line to line.

\section{The Incompressible Slice Model (ISM)}  \label{1}
In this section, we provide a review of the Cotter-Holm Slice Model. For this, we consider its Lagrangian particle map, which defines the evolution of a fluid configuration after a time $t,$ and we show that the possible configurations of this fluid system form a Lie group. The natural space for the velocities is the group's Lie algebra. On the Lie algebra, and for a choice of Lagrangian function, we can appy Hamilton's principle and derive evolution equations for the Slice Model dynamics.
\subsection{The Cotter-Holm Slice Model}
In the CHSM  \cite{CoHo2013}, one considers a dynamical system with Lagrangian evolution map of the form
\begin{align}
\phi(X,Y,Z,t) = (x(X,Z,t), y(X,Z,t) + Y, z(X,Z,t)),
\label{lagrangianmap}
\end{align}
for $(X,Z) \in \Omega \subset \mathbb{R}^2$ an open bounded domain, and $Y \in \mathbb{R}.$ Here, $\Omega$ is called the slice and $y$ is the transverse component to the slice. The change in time of the Eulerian coordinate paths $x(X,Z,t)$ and $z(X,Z,t)$ representing the motion of a fluid parcel in the vertical slice $\Omega,$ is assumed to depend only on its Lagrangian coordinates, or labels, $(X,Z)$ in the reference slice configuration. The change in the transverse Eulerian component $y(X,Y,Z,t)$ is taken to depend on the coordinates $X, Z,$ plus a linear variation in $Y.$  \\

\begin{figure}[H]\label{fig1}
\includegraphics[width=0.75\textwidth]{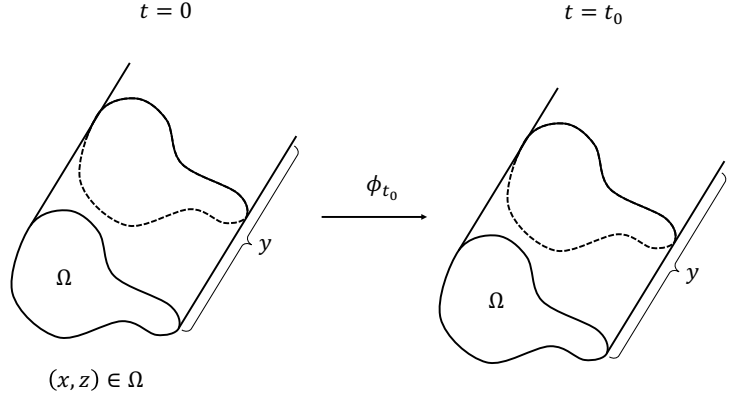}
\centering
\caption{The Lagrangian map $\phi$ explains how to move from a fluid configuration at time $t=0$ to a configuration at time $t=t_0.$}
\end{figure}

The set of Lagrangian maps of the type \eqref{lagrangianmap} can be modelled as $G$ = Diff$(\Omega)$ $\circled{s}$ $\mathcal{F}(\Omega),$ where Diff($\Omega$) denotes the group of diffeomorphisms in $\Omega,$ and $\mathcal{F}(\Omega)$ represents the group of differentiable real functions in $\Omega$. The symbol $\circled{s}$ denotes semi-direct product between two algebraic groups. $G$ can be endowed with a Lie group structure, with multiplication representing composition of Lagrangian maps \eqref{lagrangianmap}. Multiplication in the group is given by the formula
\begin{align}  
(\phi_1, f_1)*(\phi_2, f_2) = (\phi_2 \circ \phi_1, f_2 \circ \phi_1 + f_1),
\label{multiplication}
\end{align}
for $\phi_1, \phi_2 \in$ Diff($\Omega$), $f_1, f_2 \in \mathcal{F}(\Omega).$ This makes $(G, *)$ a Lie group. The operation $*$ turns out to be a right action.
\begin{remark}
Formula (\ref{multiplication}) represents the result of the composition of two Lagrangian maps of the type (\ref{lagrangianmap}), so the product on the Lie group describes the dynamics of the particles in the Slice Model. 
\end{remark}
If a Lie group $G$ represents the motions of a given physical system, the natural space for the velocities is its Lie algebra. The right-invariant Lie algebra of $G$ can be identified with the space $\mathfrak{g} = \mathfrak{X}(\Omega) \circled{s} \mathcal{F}(\Omega),$ where $\mathfrak{X}(\Omega)$ represents the set of right-invariant vector fields on $\Omega.$ This implies the velocity in this model has two components: the slice component $u_S \in \mathfrak{X}(\Omega),$ and the component in the $y$ direction (the transverse component), $u_T \in \mathcal{F}(\Omega).$ \\\\
The Lie bracket on $\mathfrak{g}$ for right-invariant vector fields (remember that the action on $G$ is right-invariant) is defined by the formula
$$[(u_S, u_T),(w_S, w_T)] = ([u_S, w_S], u_S \cdot \nabla w_T - w_S \cdot \nabla u_T),
\quad u_S, w_S \in \mathfrak{X} (\Omega), \quad u_T, w_T \in \mathcal{F}(\Omega).$$
                                                                                                                                                                                                                                                                                                                                                                                                                                                                                                                                                                                                                                                                                                                                                                                                                                                                                                                                                                                                                                                                                                                                                                                                                                                                                                                                                                                                                                                                                                                                                                                                                                                                                                                                                                                                                                                                                                                                                                                                                                                                                                                                                                                                                                                                                                                                                                                                                                                                                                                                                                                                                                                                                                                                                                                                                                                                                                                                                                                                                                                                                                                                                                                                                                                                                                                                                                                                                                                                                                                                                                                                                                                                                                                                                                                                                                                                                                                                                                                                                                                                                                                                                                                                                                                     Advected Eulerian quantities are defined to be the variables which are Lie transported by the Eulerian velocity. Conservation of mass comes from the right action by the inverse for advected quantities $a(t) = a_0g^{-1}(t),$ where $a_0$ denotes the advected quantity at time zero (in this case the mass density), and $g(t) \in G$ denotes the flow (in this case $\phi(t)).$ Conservation of mass in the CHSM reads
 \begin{equation*}                                                                                                                                                                                                                                                                                                                                                                                                                                                                                                                                                                                                                                                                                                                                                                                                                                                                                                                                                                                                                                                                                                                                                                                                                                                                                                                                                                                                                                                                                                                                                                                                                                                                                                                                                                                                                                                                                                                                                                                                                                                                                                                                                                                                                                                                                                                                                                                                                                                                                                                                                                                                                                                                                                                                                                                                                                                                                                                                                                                                                                                                                                                                                                                                                                                                                                                                                                                                                                                                                                                                                                                                                                                                                                                                                                                                                                                                                                                                                                                                                                                                                                                                                                                                                                                                                                                                                                                                                                                                                                                                                                                                                                                                                                                                                                                                                                                                                                                                                                                                                                                                                                                                                                                                                                                                                                                                                                                                                                                                                                                                                                                                                                                                                                                                                                                                                                                                                                                                                                                                                                                                                                                                                                                                                                                                                                                                                                                                                                                                                                                                                                                                                                                                                                                                                                                                                                                                                                                                                                                                                                                                                                                                                                                                                                                                                                                                                                                                                                                                                                                                                                                                                                                                                                                                                                                                                                                                                                                                                                                                                                                                                                                                                                                                                                                                                                                                                                                                                                                                                           \left( \frac{\partial}{\partial t} + \mathcal{L}_{(u_S, u_T)} \right) (D \diff^{ 3}x)                                                                                                                                                                                                                                                                                                                                                                                                                                                                                                                                                                                                                                                                                                                                                                                                                                                                                                                                                                                                                                                                                                                                                                                                                                                                                                                                                                                                                                                                                                                                                                                                                                                                                                                                                                                                                                                                                                                                                                                                                                                                                                                                                                                                                                                                                                                                                                                                                                                                                                                                                                                                                                                                                                                                                                                                                                                                                                                                                                                                                                                                                                                                                                                                                                                                                                                                                                                                                                                                                                                                                                                                                                                                                                                                                                                                                                                                                                                                                                                                                                                                                                                                                                                                                                           = \left( \frac{\partial D}{\partial t} + \nabla \cdot (u_S D) + \frac{\partial (u_T D)}{\partial y}  \right) \diff^{3} x =0,                                                                                                                                                                                                                                                                                                                                                                                                                                                                                                                                                                                                                                                                                                                                                                                                                                                                                                                                                                                                                                                                                                                                                                                                                                                                                                                                                                                                                                                                                                                                                                                                                                                                                                                                                                                                                                                                                                                                                                                                                                                                                                                                                                                                                                                                                                                                                                                                                                                                                                                                                                                                                                                                                                                                                                                                                                                                                                                                                                                                                                                                                                                                                                                                                                                                                                                                                                                                                                                                                                                                                                                                                                                                                                                                                                                                                                                                                                                                                                                                                                                                                                                                      
 \end{equation*}                                                                                                                                                                                                                                                                                                                                                                                                                                                                                                                                                                                                                                                                                                                                                                                                                                                                                                                                                                                                                                                                                                                                                                                                                                                                                                                                                                                                                                                                                                                                                                                                                                                                                                                                                                                                                                                                                                                                                                                                                                                                                                                                                                                                                                                                                                                                                                                                                                                                                                                                                                                                                                                                                                                                                                                                                                                                                                                                                                                                                                                                                                                                                                                                                                                                                                                                                                                                                                                                                                                                                                                                                                                                                                                                                                                                                                                                                                                                                                                                                                                                                                                                                                                                                                where $D$ denotes the mass density, and $\mathcal{L}_{(u_S, u_T)} (D \diff^{3} x)$ is the Lie derivative of the three form $D \diff^3 x =  D(x,y,z) \diff x \diff y \diff z$. Since $D$ and $u_T$ are assumed to be $y-$independent, conservation of mass can be reformulated as
\begin{eqnarray}  \label{mass}
                                                                                                                                                                                                                                                                                                                                                                                                                                                                                                                                                                                                                                                                                                                                                                                                                                                                                                                                                                                                                                                                                                                                                                                                                                                                                                                                                                                                                                                                                                                                                                                                                                                                                                                                                                                                                                                                                                                                                                                                                                                                                                                                                                                                                                                                                                                                                                                                                                                                                                                                                                                                                                                                                                                                                                                                                                                                                                                                                                                                                                                                                                                                                                                                                                                                                                                                                                                                                                                                                                                                                                                                                                                                                                                                                                                                                                                                                                                                                                                                                                                                                                                                                                                               \partial_t D + \nabla \cdot (u_S D)= 0.
                                                                                                                                                                                                                                                                                                                                                                                                                                                                                                                                                                                                                                                                                                                                                                                                                                                                                                                                                                                                                                                                                                                                                                                                                                                                                                                                                                                                                                                                                                                                                                                                                                                                                                                                                                                                                                                                                                                                                                                                                                                                                                                                                                                                                                                                                                                                                                                                                                                                                                                                                                                                                                                                                                                                                                                                                                                                                                                                                                                                                                                                                                                                                                                                                                                                                                                                                                                                                                                                                                                                                                                                                                                                                                                                                                                                                                                                                                                                                                                                                                                                                                                                                                                              \end{eqnarray}  
                                                                                                                                                                                                                                                                                                                                                                                                                                                                                                                                                                                                                                                                                                                                                                                                                                                                                                                                                                                                                                                                                                                                                                                                                                                                                                                                                                                                                                                                                                                                                                                                                                                                                                                                                                                                                                                                                                                                                                                                                                                                                                                                                                                                                                                                                                                                                                                                                                                                                                                                                                                                                                                                                                                                                                                                                                                                                                                                                                                                                                                                                                                                                                                                                                                                                                                                                                                                                                                                                                                                                                                                                                                                                                                                                                                                                                                                                                                                                                                                                                                                                                                                                                                             In this model, potential temperature is defined by
\begin{align}
\theta(x,y,z,t) = \theta_S(x,z,t) + (y-y_0) s,
\label{potentialt}
\end{align}
and the tracer equation for the potential temperature, implied by the right action of the inverse flow on advected quantities, becomes
\begin{eqnarray}  \label{temperature}
\partial_t \theta_S  + u_S \cdot \nabla \theta_S + u_T s= 0.
\end{eqnarray}

\begin{remark}
Note that \eqref{potentialt} is a very special way of defining potential temperature, since it is assumed it varies linearly on the $y$ direction. This proves to be useful for having circulation theorems (see Subsection \ref{cantidades}) which do not hold in the general case.
\end{remark}
Regarding the mathematical framework, $D$ is considered as an element in $\wedge^2 (\Omega),$ defined to be the space of two form-densities $\sum_{i=1}^m \alpha_i(x,z) \diff x \diff z,$ and $\theta_S$ is an element in $\mathcal{F}(\Omega)$ (a differentiable scalar function). Indeed, this is because equations $\eqref{mass}$ and $\eqref{temperature}$ can be rewritten respectively in Lie derivative notation as
$$(\partial_t + \mathcal{L}_{u_S}) (D \diff S) = 0,$$
and
$$(\partial_t + \mathcal{L}_{u_S}) \theta_S = -u_T s.$$
The CHSM considers the reduced Lagrangian function 
$$l[(u_S, u_T), (\theta_S,s), D]: (\mathfrak{X}(\Omega) \circled{s} \mathcal{F}(\Omega)) \circled{s} (( \mathcal{F}(\Omega) \times \mathbb{R}) \times \wedge^2 (\Omega)) \rightarrow \mathbb{R}.$$
As already explained, the tracers are advected by the Eulerian flow. The velocity vector field is right-invariant in Eulerian coordinates, so this implies that the Lagrangian function must be right-invariant, and therefore, reduction by symmetry can be performed. The equations of motion in the CHSM are the equations coming from the variational principle for this general Lagrangian function. This is, we consider the action functional
$$S[(u_S, u_T),(\theta_S,s), D] = \int_{0}^{T} l[(u_S, u_T), (\theta_S,s), D] \diff t,$$
where the last integral is taken over closed paths for the variables, vanishing at the endpoints. Apply Hamilton's principle and obtain
$$0 = \delta S = \delta \int_{0}^{T} l[(u_S, u_T), (\theta_S,s), D] \diff t,$$
from which one can derive the CHSM equations after integration by parts (c.f. \cite{CoHo2013}):
\begin{subequations}
\begin{align}
\label{Eady-EPSDeqns1} \partial_t \left( \frac{\delta l}{\delta u_S} \right) + \nabla \cdot \left( u_S \otimes \frac{\delta l}{\delta u_S} \right) + (\nabla u_S)^T \cdot \frac{\delta l}{\delta u_S} + \frac{\delta l}{\delta u_T} \nabla u_T &= D \nabla \frac{\delta l}{\delta D} - \frac{\delta l}{\delta \theta_S} \nabla \theta_S, \\
\label{Eady-EPSDeqns2} \partial_t  \left( \frac{\delta l}{\delta u_T} \right) + \nabla \cdot \left(u_S     \frac{\delta l}{\delta u_T}\right) &= - \frac{\delta l}{\delta \theta_S} s, \\
\label{Eady-EPSDeqns3} \partial_t \theta_S + u_S \cdot \nabla \theta_S + u_T s &= 0, \\
\label{Eady-EPSDeqns4} \nabla \cdot u_S &= 0.
\end{align}
\end{subequations}
These are the equations for a general (unspecified) Lagrangian function. In the next sections, we will substitute a particular Lagrangian, namely, the Lagrangian in the incompressible Euler-Boussinesq case, which has important physical significance.
\begin{figure}[H]\label{fig2}
\includegraphics[width=0.75\textwidth]{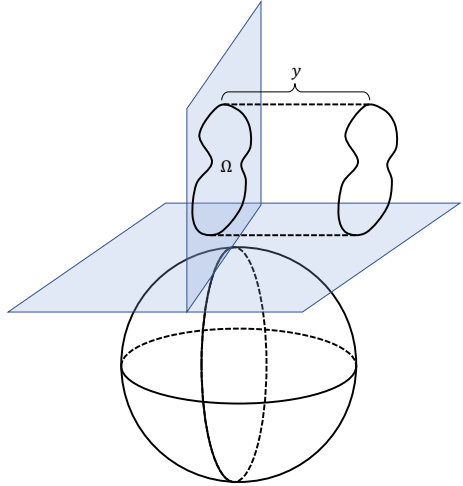}
\caption{Fluid motion in the vertical slice $\Omega$ coupled dynamically to the flow velocity transverse to the slice.}
\end{figure}

\subsection{The CHSM and front formation}
As a motivation for defining this model, we will say that slice models are very useful to study atmospheric processes where the dependence of the particles on one of the variables can be simply approximated. 

Following \cite{HosBre}, atmospheric fronts are generated by changing temperature gradients. More specifically, they are associated with discontinuities in velocity and potential temperature. There are many mechanisms which trigger frontogenesis, like:
\begin{enumerate}[(i)]
\item \label{horizontaldef} A horizontal deformation field.\\
\item \label{horizontalshear} Horizontal shearing motion.\\
\item \label{verticaldef} A vertical deformation field.\\
\item \label{diffvertical} Differential vertical motion.
\end{enumerate}
Mechanism \ref{horizontaldef} is the classical frontogenesis mechanism postulated by Bergeron (1928) \cite{Ber}. Mechanism \ref{horizontalshear} is crucial in the dynamics of frontal systems, and has been studied by Sawyer and Eliassen (1960s) \cite{Eli}. Mechanism \ref{diffvertical} can have either frontolytic or frontogenetic effects, and has been found responsible, for instance, for the lack of sharpness of surface fronts in the middle troposphere. Also, mechanisms \ref{horizontaldef}-\ref{horizontalshear} operate on the synoptic scale (they are large scale geostrophic processes), while \ref{verticaldef}-\ref{diffvertical} are dominant on the scale of the front (and are motions pertaining to the baroclinic flow which give rise to the rapid formation of a discontinuity). \\\\
Fronts form on the Earth when there is a strong temperature gradient on the North-South direction. In \cite{HosBre}, fronts are formulated mathematically. As an approximation, one considers geostrophic balance in the cross-front direction. After formulating the equations of motion and nondimensionalising, one seeks a solution of the type
\begin{align}\label{unaa} u &= -\alpha x + u'(x,z), \\ 
v &= \alpha y + v'(x,z), \\
w &= w(x,z), \\
\label{doos} \theta &= \theta(x,z), 
\end{align}
which is consistent with the approximation made in \cite{CoHo2013}. Note that $(u,v)$ in \eqref{unaa}-\eqref{doos} is represented by the 2D slice velocity $u_S$ in the CHSM, and $w$ is represented by $u_T$.

\subsection{The Incompressible Slice Model} 
For the incompressible Euler-Boussinesq Eady model in a smooth domain $(x,z)\in\Omega,$ the Lagrangian function is
\begin{eqnarray}  \label{lagrangiani}
l[u_S,u_T,D,\theta_S, p]  =\int_\Omega \left\{ \frac{D}{2} (|u_S|^2 + |u_T|^2) + Df u_T x + \frac{g}{\theta_0} D \theta_S z + p(1-D) \right\} \diff V,
\end{eqnarray}
where $g$ is the acceleration due to the gravity, $\theta_{0}$ is the reference temperature, $f$ is the Coriolis force parameter, which is assumed to be a constant, and $p$ is a multiplier which imposes the constraint $D=1,$ implying $\nabla \cdot u_S = 0$ (incompressibility).
\begin{remark}[Motivation for the Euler-Boussinesq Lagrangian]
Note that the integral in \eqref{lagrangiani} can be expressed as
\begin{equation*}
 \int_\Omega  KE(x,z) \diff V + \int_{\Omega} P_{f}E(x,z) \diff V + \int_\Omega IE(x,z) \diff V + \int_\Omega  p(1-D) \diff V =KE+P_{f}E+IE,
\end{equation*}
supplemented with constraint $D=1$. Above, $KE$ represents the kinetic energy, $P_{f}E$ the work done by the Coriolis force stored as potential energy in the fluid, and $IE$ the internal energy of the system.
\end{remark}
The ISM equations are the CHSM equations \eqref{Eady-EPSDeqns1}-\eqref{Eady-EPSDeqns4} for the Lagrangian function (\ref{lagrangiani}), which can be computed (\cite{CoHo2013}) to be
\begin{subequations}
\begin{align}
\label{Ism-eq:1} \partial_t u_S + u_S\cdot\nabla u_S 
-fu_T{\hat{x}}
&= -\nabla p 
 + \frac{g}{\theta_0}\theta_S\hat{z},  \\
\label{Ism-eq:2} \partial_t u_T + u_S\cdot\nabla u_T 
+ fu_S\cdot\hat{{x}}
&=   -\frac{g}{\theta_0} z {s},  \\
 \label{Ism-eq:3}\partial_t \theta_S + u_S\cdot\nabla\theta_S + u_T{s}  &= 0, \\
 \label{Ism-eq:4} \nabla\cdot u_S &=  0, 
\end{align}
\end{subequations}
supplemented with the boundary condition
\begin{equation} \label{Ism-eq:bdy}
 u_{S}\cdot n =0 \quad \text{on \ } \partial\Omega.
\end{equation}
Here $\hat{{x}}$ is the unit normal in the x-direction and $s$ is a constant which measures the variation of the potential temperature in the direction transverse to the slice (for further concreteness, see \cite{CoHo2013}).

\subsection{Conserved quantities in the ISM}  \label{cantidades}
The CHSM enjoys some conservation laws due to its variational character. Here, we state them in the particular case of the ISM, since it is the model we will be working with throughout the rest of this paper. The circulation velocity in the ISM is defined by
$$v_S = s u_S - (u_T + fx) \nabla \theta_S.$$
\begin{theorem}[Circulation conservation \cite{CoHo2013}]
The ISM \eqref{Ism-eq:1}-\eqref{Ism-eq:4} conserves the circulation of $v_S$ on loops $c(u_S)$ carried by $u_S$
\begin{align}
\frac{d}{dt}\oint_{c(u_s)} v_S\cdot \diff s 
= 	
\oint_{c(u_s)} 
\diff \pi =
0.
\label{EPEady-circons1}
\end{align}
\end{theorem}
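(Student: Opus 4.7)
The plan is to apply the standard Lie-derivative transport theorem for a $1$-form along a flow and show that the transported time derivative of $v_S$ is exact; then the Stokes' theorem on the closed loop gives the vanishing.

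\textbf{Step 1 (transport formula).} Since the loop $c(u_S)$ is advected by the slice velocity $u_S$, the Cartan-type identity for pullback of $1$-forms yields
\begin{equation*}
\frac{d}{dt}\oint_{c(u_S)} v_S \cdot \diff s
= \oint_{c(u_S)} \bigl(\partial_t v_S + \mathcal{L}_{u_S} v_S\bigr)\cdot \diff s.
\end{equation*}
Hence it is enough to exhibit a scalar $\pi$ such that $(\partial_t + \mathcal{L}_{u_S}) v_S = \nabla \pi$.

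\textbf{Step 2 (split the computation).} Using the definition $v_S = s u_S - (u_T + fx)\nabla \theta_S$, write
\begin{equation*}
(\partial_t + \mathcal{L}_{u_S}) v_S = s(\partial_t + \mathcal{L}_{u_S}) u_S - (\partial_t + \mathcal{L}_{u_S})\bigl[(u_T+fx)\nabla \theta_S\bigr].
\end{equation*}
For the first piece I would use the identity $\mathcal{L}_{u_S} u_S = (u_S\cdot \nabla) u_S + \tfrac{1}{2}\nabla |u_S|^2$ (valid for the flat $1$-form associated with $u_S$) and substitute \eqref{Ism-eq:1}, obtaining $f u_T \hat x - \nabla p + \tfrac{g}{\theta_0}\theta_S \hat z + \tfrac12 \nabla|u_S|^2$. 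For the second piece I would use $\mathcal{L}_{u_S}(g\,d\theta_S) = (u_S\cdot \nabla g)\,d\theta_S + g\,d(u_S\cdot \nabla \theta_S)$ with $g=u_T+fx$, then invoke \eqref{Ism-eq:2} and \eqref{Ism-eq:3} to rewrite the material derivatives of $u_T$ and $\theta_S$, giving $-\tfrac{gs}{\theta_0} z\,\nabla \theta_S - s(u_T+fx)\nabla u_T$.

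\textbf{Step 3 (recognise an exact form).} Combining the two pieces, the non-exact contributions should collapse: $\tfrac{g s}{\theta_0}\theta_S \hat z + \tfrac{g s}{\theta_0} z\nabla \theta_S = \tfrac{g s}{\theta_0}\nabla(z\theta_S)$, and, using $\hat x = \nabla x$, the identity $sfu_T\hat x + s(u_T+fx)\nabla u_T = sf\,\nabla(xu_T) + \tfrac{s}{2}\nabla u_T^2$. Therefore
\begin{equation*}
(\partial_t + \mathcal{L}_{u_S}) v_S = \nabla \pi, \qquad
\pi := -sp + \frac{s}{2}|u_S|^2 + \frac{s}{2} u_T^2 + sfxu_T + \frac{gs}{\theta_0}z\theta_S.
\end{equation*}

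\textbf{Step 4 (conclude).} Substituting into Step 1 gives $\frac{d}{dt}\oint_{c(u_S)} v_S\cdot \diff s = \oint_{c(u_S)} \diff \pi$, and the latter vanishes by Stokes' theorem since $c(u_S)$ is closed. No genuine obstacle is expected; the whole content lies in the algebraic cancellation of Step 3, which is essentially the manifestation that $v_S$ is the correct Kelvin--Noether $1$-form for the Euler--Poincaré structure of the ISM. The only point requiring a touch of care is the signs and scaling in the Lie-derivative identity for $u_S$ regarded as a $1$-form; everything else is a direct substitution of the ISM equations.
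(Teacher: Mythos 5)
Your computation is correct: I verified each step, and in particular the potential
\begin{equation*}
\pi=-sp+\tfrac{s}{2}|u_S|^2+\tfrac{s}{2}u_T^2+sfxu_T+\tfrac{gs}{\theta_0}z\theta_S
\end{equation*}
does satisfy $(\partial_t+\mathcal{L}_{u_S})\,v_S^\flat=\diff\pi$, so the circulation integral of $v_S$ over a material loop is conserved. Note, however, that the paper does not prove this statement at all: it is imported from \cite{CoHo2013}, where it arises as an instance of the abstract Kelvin--Noether theorem for Euler--Poincar\'e equations on semidirect products, i.e.\ $v_S$ is read off as $\frac{1}{D}\frac{\delta l}{\delta u_S}$ (suitably reduced to the slice) and conservation is automatic from the variational structure \eqref{Eady-EPSDeqns1}--\eqref{Eady-EPSDeqns4}. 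Your route is a direct, coordinate-level verification: you take the transport theorem for $1$-forms, substitute the three evolution equations \eqref{Ism-eq:1}--\eqref{Ism-eq:3}, and exhibit the exact remainder explicitly. The geometric route buys generality (it works for any choice of Lagrangian and explains \emph{why} $v_S$ is the right circulation velocity, which you yourself flag at the end); your route buys concreteness and independence from the variational machinery, and it produces the explicit Bernoulli-type potential $\pi$ that the paper's statement leaves unspecified. The only point I would tighten is Step 1: you should state explicitly that you are identifying the vector field $v_S$ with its flat $1$-form and that $\mathcal{L}_{u_S}(u_S^\flat)=\bigl((u_S\cdot\nabla)u_S+\tfrac12\nabla|u_S|^2\bigr)^\flat$ holds for the Euclidean metric on the slice, since that identity is where the $\tfrac12\nabla|u_S|^2$ term enters; as written it is correct but stated somewhat in passing.
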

\begin{corollary}[Conservation of potential vorticity \cite{CoHo2013}]
The ISM potential vorticity, defined by $q={\rm curl} (v_S) \cdot \hat{y}$ is conserved on fluid parcels,
\begin{equation}\label{generalisedvorti}
\frac{Dq}{Dt} = \partial_t q + u_S\cdot \nabla q = 0\,.
\end{equation}
\end{corollary}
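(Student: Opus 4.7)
The plan is to deduce the corollary directly from the Kelvin circulation theorem just proved, together with incompressibility $\nabla \cdot u_S = 0$. Since $Dq/Dt = 0$ is a pointwise statement and $q$ is a scalar, my strategy is to promote the integral identity coming from circulation conservation to a pointwise one by testing against arbitrary flow-transported surface patches in the slice $\Omega$.

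First, I would fix an arbitrary smooth two-dimensional patch $\Sigma_0 \subset \Omega$ with smooth boundary $c_0 = \partial \Sigma_0$, and denote by $\Sigma(t)$ and $c(t) = \partial \Sigma(t)$ their images under the flow map of $u_S$. Applying Stokes' theorem on the slice yields
$$\oint_{c(t)} v_S \cdot \diff s \;=\; \int_{\Sigma(t)} \text{curl}(v_S)\cdot \hat{y}\, \diff A \;=\; \int_{\Sigma(t)} q\, \diff A.$$
By the circulation theorem, the left-hand side is constant in $t$, so the Reynolds transport theorem applied to the right-hand side gives
$$0 \;=\; \frac{d}{dt}\int_{\Sigma(t)} q\, \diff A \;=\; \int_{\Sigma(t)} \bigl( \partial_t q + \nabla \cdot (q\, u_S) \bigr)\diff A.$$

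Second, I would invoke incompressibility $\nabla \cdot u_S = 0$ from \eqref{Ism-eq:4} to rewrite $\nabla \cdot (q\, u_S) = u_S \cdot \nabla q$, reducing the above identity to
$$\int_{\Sigma(t)} \bigl( \partial_t q + u_S \cdot \nabla q \bigr)\diff A \;=\; 0$$
for every flow-transported patch $\Sigma(t)$. Continuity of the integrand (ensured by the Sobolev solution class used in Section \ref{3}) together with the arbitrariness of $\Sigma_0$ then forces the integrand to vanish pointwise, which is exactly \eqref{generalisedvorti}.

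The main difficulty is essentially bookkeeping: one must verify that the transported family $\Sigma(t)$ remains smooth and embedded for small $t$, which follows from standard ODE theory applied to the flow of the regular vector field $u_S$, and that $q$ is regular enough for Reynolds transport to apply. An equivalent algebraic route would be to take the 2D curl of the Lie-transport equation $(\partial_t + \mathcal{L}_{u_S})(v_S\cdot \diff s) = \diff \pi$ satisfied by the circulation one-form, and then use $\text{curl}(\nabla \pi) = 0$ along with $\nabla \cdot u_S = 0$ to conclude that the two-form $q\, \diff A$ is Lie-transported by $u_S$, which when $u_S$ is divergence-free is equivalent to scalar advection of $q$.
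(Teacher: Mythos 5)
Your argument is correct. Note that the paper itself supplies no proof of this corollary --- it is stated as a direct consequence of the circulation theorem with a citation to \cite{CoHo2013}, where the derivation is carried out in the geometric (Euler--Poincar\'e) framework: since the exterior derivative commutes with the Lie derivative, applying $\diff$ to the transport equation $(\partial_t + \mathcal{L}_{u_S})(v_S\cdot \diff x) = \diff \pi$ kills the right-hand side and shows that the two-form $q\,\diff x\wedge \diff z$ is advected; incompressibility then preserves the area element, so $q$ is advected as a scalar. Your primary route --- Stokes' theorem on an arbitrary material patch, Reynolds transport, $\nabla\cdot u_S=0$, and a localisation argument --- is the classical, more elementary version of exactly this computation, and the ``equivalent algebraic route'' you sketch at the end is essentially the cited derivation. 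The integral route buys transparency and requires only the circulation theorem as a black box; the Lie-derivative route buys brevity and makes manifest why the result is structural (it holds for any Euler--Poincar\'e system of this type). One small caveat worth keeping in mind: $q$ involves first derivatives of $u_S$, $u_T$, $\theta_S$, so $\nabla q$ in \eqref{generalisedvorti} involves second derivatives, and for solutions merely in $H^s$ with $2<s\le 3$ the pointwise identity should be read in a weak or formal sense (or for the smooth solutions of the variational derivation), which is consistent with how the paper treats its conservation laws.
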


\begin{remark}
Note that the definition of potential vorticity above differs from the usual definition of potential vorticity in fluid dynamics, since we have to take into account the transverse velocity $u_T,$ the Coriolis force $f,$ and the potential temperature $\theta_S.$ In other words, we are adding the Ertel potential vorticity $\nabla \theta_S \times \nabla (u_T + fx)$.
\end{remark}
The ISM has also the following integral conserved quantities:
\begin{theorem}[Conserved energy and enstrophy \cite{CoHo2013}]
The ISM \eqref{Ism-eq:1}-\eqref{Ism-eq:4} conserves energy and generalised enstrophy:
\begin{align}
h=\int_\Omega \left \{  \frac12 |u_S|^2 + \frac12 u_T^2 - \gamma_S \theta_S \right \} \diff V,
\quad\hbox{(Energy)} \quad
\,\label{EPEady-erg1}
\end{align}
\begin{align}
C_\Phi=\int_\Omega \Phi(q) \diff V,
\quad\hbox{(Generalised enstrophy)}\quad
\label{EPEady-erg2}
\end{align}
for any differentiable function $\Phi$ of the potential vorticity $q:={\rm curl}(v_S) \cdot \hat{y},$ and $\gamma_S = (g/\theta_0) z$.
\end{theorem}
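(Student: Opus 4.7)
The plan is to verify each conservation law by differentiating under the integral sign, substituting the evolution equations \eqref{Ism-eq:1}--\eqref{Ism-eq:3}, and then cancelling the resulting terms by integration by parts, using incompressibility $\nabla\cdot u_S=0$ together with the no-flux boundary condition $u_S\cdot n=0$ on $\partial\Omega$.

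For the energy, I would first write
\begin{align*}
\frac{dh}{dt} = \int_\Omega \left( u_S\cdot\partial_t u_S + u_T \partial_t u_T - \gamma_S \partial_t \theta_S \right)\,\diff V,
\end{align*}
and substitute the three time derivatives from \eqref{Ism-eq:1}--\eqref{Ism-eq:3}. The resulting terms group into four families which each cancel: (a) the advective contributions combine to $-\tfrac12 u_S\cdot\nabla(|u_S|^2+u_T^2)$, whose integral vanishes by $\nabla\cdot u_S=0$ and the boundary condition; (b) the pressure term $-\int u_S\cdot\nabla p\,\diff V$ vanishes for the same reason; (c) the Coriolis cross-terms $\pm f u_T(u_S\cdot\hat{x})$ coming from \eqref{Ism-eq:1} and \eqref{Ism-eq:2} cancel pairwise; (d) the $-\tfrac{g s}{\theta_0} z u_T$ source in \eqref{Ism-eq:2} cancels with $\gamma_S u_T s=\tfrac{g s}{\theta_0} z u_T$ arising from $-\gamma_S\partial_t\theta_S$. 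What remains is the pair of buoyancy contributions
\begin{align*}
\int_\Omega \tfrac{g}{\theta_0}\theta_S(u_S\cdot\hat{z})\,\diff V + \int_\Omega \tfrac{g}{\theta_0} z (u_S\cdot\nabla\theta_S)\,\diff V,
\end{align*}
whose sum vanishes after integrating the second by parts and using $\nabla\cdot(z u_S)=z\nabla\cdot u_S + u_S\cdot\nabla z=u_S\cdot\hat{z}$, together again with the no-flux boundary condition.

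For the generalised enstrophy $C_\Phi$, I would directly invoke the potential-vorticity transport equation \eqref{generalisedvorti} established in the corollary above. This gives $\Phi'(q)\partial_t q = -\Phi'(q) u_S\cdot\nabla q = -u_S\cdot\nabla \Phi(q)$, hence
\begin{align*}
\frac{dC_\Phi}{dt} = -\int_\Omega u_S\cdot\nabla \Phi(q)\,\diff V = \int_\Omega \Phi(q)\nabla\cdot u_S\,\diff V - \int_{\partial\Omega} \Phi(q)\, u_S\cdot n\,\diff S = 0.
\end{align*}

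Neither step presents any real analytical difficulty; the computation is essentially bookkeeping. The one point requiring care is step (d) in the energy calculation, where the interaction between the buoyancy term in \eqref{Ism-eq:1}, the explicit factor $z$ inside $\gamma_S$, and the $u_T s$ source in \eqref{Ism-eq:3} only closes after recognising that integration by parts on $\int z(u_S\cdot\nabla\theta_S)\,\diff V$ produces exactly $-\int\theta_S(u_S\cdot\hat z)\,\diff V$. Tracking signs and the Coriolis conventions carefully is what makes the cancellations transparent.
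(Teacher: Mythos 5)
Your proof is correct. The paper itself offers no proof of this theorem --- it is quoted from Cotter--Holm \cite{CoHo2013}, where energy conservation follows abstractly from the Euler--Poincar\'e/Lie--Poisson structure (the energy is the Legendre transform of the right-invariant Lagrangian, and $C_\Phi$ is a Casimir of the semidirect-product bracket). Your direct verification --- substituting \eqref{Ism-eq:1}--\eqref{Ism-eq:3} into $dh/dt$, cancelling the advective, pressure, Coriolis and source terms, and closing the buoyancy pair via $\int_\Omega z\,(u_S\cdot\nabla\theta_S)\,\diff V=-\int_\Omega\theta_S\,(u_S\cdot\hat z)\,\diff V$ under $\nabla\cdot u_S=0$ and $u_S\cdot n=0$ --- is a sound, self-contained alternative, and the enstrophy argument via the transport equation \eqref{generalisedvorti} is exactly the standard one. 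The computational route buys transparency and independence from the geometric machinery; the variational route buys generality (it explains \emph{why} these functionals are conserved for any choice of slice Lagrangian). All sign and boundary bookkeeping in your step (d) checks out.
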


\begin{remark}
Conserved quantities are fundamental to apply the Energy-Casimir algorithm (\cite{HoMaRaWe1985}) for the study of stability, and serve to guarantee integrability properties of the system.
\end{remark}

\section{Characterisation of ISM equilibrium solutions via Energy-Casimir}\label{nueva}
To study formal and nonlinear stability via the Energy-Casimir method \cite{HoMaRaWe1985} (see Section \ref{2} for a description of the algorithm), one constructs a generalised conserved quantity of the type
$$H_\Phi = h + C_\Phi .  $$
\begin{theorem} \label{equilibrio}
Critical points of $H_\Phi=h+C_\Phi$ are Eady equilibrium solutions of \eqref{Eady-EPSDeqns1}-\eqref{Eady-EPSDeqns4}.
\end{theorem}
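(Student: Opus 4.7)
The plan is to compute $\delta H_\Phi$ as a linear functional on admissible variations $(\delta u_S,\delta u_T,\delta\theta_S)$ --- with $\delta u_S$ divergence-free and tangent to $\partial\Omega$ --- impose that it vanishes, and verify that the resulting Euler--Lagrange conditions force the right-hand sides of \eqref{Ism-eq:1}--\eqref{Ism-eq:4} to vanish at $(u_{Se},u_{Te},\theta_{Se})$, so that the critical point is genuinely stationary.

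The variation of the energy is immediate,
$$\delta h = \int_\Omega \bigl(u_S\cdot\delta u_S + u_T\,\delta u_T - \gamma_S\,\delta\theta_S\bigr)\diff V.$$
For the Casimir, writing $v_S = s u_S - (u_T+fx)\nabla\theta_S$ and using $q=\text{curl}(v_S)\cdot\hat y$, the chain rule gives
$$\delta q = \text{curl}\bigl(s\,\delta u_S - \delta u_T\,\nabla\theta_S - (u_T+fx)\nabla\delta\theta_S\bigr)\cdot\hat y.$$
A first integration by parts moves the curl off $\delta v_S$ onto $\Phi'(q)\hat y$, yielding the bulk term $\int_\Omega \text{curl}(\Phi'(q)\hat y)\cdot\delta v_S\,\diff V$ together with boundary integrals along each connected component $\partial\Omega_i$. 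A second integration by parts applied to the $(u_T+fx)\nabla\delta\theta_S$ piece --- using that $\text{curl}(\Phi'(q)\hat y)$ is automatically divergence-free --- reduces it to $\int_\Omega \delta\theta_S\,\text{curl}(\Phi'(q)\hat y)\cdot\nabla(u_T+fx)\,\diff V$ plus further boundary contributions.

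Collecting the coefficients of the independent variations gives the Euler--Lagrange system. Since admissible $\delta u_S$ are divergence-free and tangent to $\partial\Omega$, the coefficient of $\delta u_S$ must be $L^2$-orthogonal to that subspace, hence a gradient, and by the Hodge--Leray decomposition of Lemma \ref{HelmHodge} it can be absorbed into the pressure to produce $u_{Se} = -s\,\text{curl}(\Phi'(q_e)\hat y)$. The purely algebraic coefficient of $\delta u_T$ yields $u_{Te} = \text{curl}(\Phi'(q_e)\hat y)\cdot\nabla\theta_{Se}$, and the coefficient of $\delta\theta_S$ gives the constraint $\gamma_S = \text{curl}(\Phi'(q_e)\hat y)\cdot(\nabla u_{Te}+f\hat x)$. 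These are exactly the three bulk equilibrium conditions listed in Theorem \ref{uno}.

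Substituting these identities back into \eqref{Ism-eq:1}--\eqref{Ism-eq:4}, every time-derivative cancels by design; for instance \eqref{Ism-eq:3} becomes
$$u_{Se}\cdot\nabla\theta_{Se} + u_{Te}\,s = -s\,\text{curl}(\Phi'(q_e)\hat y)\cdot\nabla\theta_{Se} + s\,\text{curl}(\Phi'(q_e)\hat y)\cdot\nabla\theta_{Se} = 0,$$
with analogous (slightly longer) cancellations for \eqref{Ism-eq:1}--\eqref{Ism-eq:2} after absorbing a gradient into $p$. The main obstacle I anticipate is not the interior algebra but the bookkeeping of the boundary contributions: for the boundary integrals to vanish against arbitrary admissible variations one must require $\Phi'(q_e)$ to be constant, equal to some $a_i$, on each component $\partial\Omega_i$ --- geometrically, $q_e$ is constant along each component of $\partial\Omega$ --- and it is precisely this matching that produces the circulation terms $\sum_i a_i \int_{\partial\Omega_i} v_S\cdot\diff s$ appended to the refined Hamiltonian in the statement of Theorem \ref{uno}.
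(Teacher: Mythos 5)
Your proposal is correct in substance, but it takes a genuinely different route from the paper's proof of this particular theorem. The paper disposes of Theorem \ref{equilibrio} in one line by invoking the abstract Energy--Casimir property from \cite{HoMaRaWe1985}: since $C_\Phi$ is a Casimir of the Lie--Poisson bracket, $H_\Phi=h+C_\Phi$ generates the same dynamics as $h$, and at a critical point of $H_\Phi$ the corresponding Hamiltonian vector field vanishes, so the point is an equilibrium --- no computation with the specific Eady Lagrangian is needed. You instead verify the claim concretely: you compute $\delta H_\Phi$, extract the Euler--Lagrange conditions, and substitute back into \eqref{Ism-eq:1}--\eqref{Ism-eq:4}. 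This is legitimate and has the advantage of not presupposing the Lie--Poisson machinery, but note that it essentially reproduces the first-variation computation that the paper carries out separately in the proof of Theorem \ref{th:equilibria} (including your correct observations about the Hodge--Leray projection of the $\delta u_S$ coefficient and the boundary matching $a_i=\Phi'(q_e|_{\partial\Omega_i})$, which the paper handles identically). The one place where your argument is thinner than it should be is the momentum equation: the claim that \eqref{Ism-eq:1} holds ``after absorbing a gradient into $p$'' requires showing that $u_{Se}\cdot\nabla u_{Se}-fu_{Te}\hat{x}-\tfrac{g}{\theta_0}\theta_{Se}\hat{z}$ is a gradient, which is not a pointwise cancellation like the one you display for \eqref{Ism-eq:3}. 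It does follow, because $u_{Se}=-s\,\mathrm{curl}(\Phi'(q_e)\hat{y})$ gives $q_e\,\hat{y}\times u_{Se}=-s\,q_e\Phi''(q_e)\nabla q_e=-s\nabla K(q_e)$ with $K'(\lambda)=\lambda\Phi''(\lambda)$, and one then runs the paper's identities \eqref{importante1}--\eqref{unaecuacion} in reverse to exhibit the remaining terms as $\nabla\bigl(p_e+|u_{Se}|^2/2+u_{Te}^2/2-\gamma_S\theta_{Se}\bigr)$; you should make this Bernoulli step explicit rather than folding it into ``analogous cancellations.''
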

\begin{proof}
This is a general property of the Energy-Casimir algorithm. For the proof, see the appendix in \cite{HoMaRaWe1985}.
\end{proof}
We present the first result of this paper, which consists in a characterisation of a class of equilibrium solutions of the ISM model.
\begin{theorem}[Equilibrium solutions of the ISM]\label{th:equilibria}
A class of stationary solutions of the ISM equations  \eqref{Ism-eq:1}-\eqref{Ism-eq:4} with boundary condition \eqref{Ism-eq:bdy} is given by critical points of the generalised Hamiltonian $$H_\Phi = \int_\Omega \left\{ \frac{1}{2} (|u_S|^2 + u_T^2)  - \gamma_S \theta_S \right\} \diff{V} + \int_\Omega \Phi(q) \diff{V} + \sum_{i=0}^n a_i \int_{\partial \Omega_i } v_S \cdot \diff s.$$
These are given by the conditions
\begin{subequations}\label{intro:conditions:equilibrium}
\begin{align}
a_i &= \Phi'(\restr{q_e}{ \partial \Omega_i}), \quad \text{for \ } i=0, \ldots, n, \label{intro:conditions:equi:1}\\
u_{Se} &=-\text{curl}(\Phi'(q_e) \hat{y}) s, \label{intro:conditions:equi:2}\\
u_{Te} &= \text{curl}(\Phi'(q_e) \hat{y}) \cdot \nabla \theta_{Se}, \label{intro:conditions:equi:3}\\
\gamma_S &= \text{curl}(\Phi'(q_e) \hat{y}) \cdot (\nabla u_{Te} + f\hat{x}). \label{intro:conditions:equi:4}
\end{align}
\end{subequations}
Here $\gamma_S = (g/\theta_0) z,$ $v_S = s u_S - (u_T + fx) \nabla \theta_S$ is the circulation velocity in the ISM, and $q={\rm curl} (v_S) \cdot \hat{y}$ is the potential vorticity. $\Omega_i,$ $i=0, \ldots, n,$ represent the different connected components of the domain $\Omega.$\\\\ 
Moreover, $\Phi$ can be written in terms of the Bernoulli function $K$ for the stationary solution as
\begin{equation*}
\Phi(\lambda) = \lambda \left( \int_\lambda \frac{K(t)}{t^2} \diff t + C \right).
\end{equation*}
\end{theorem}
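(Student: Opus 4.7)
The plan is to exploit Theorem \ref{equilibrio}: any critical point of $H_\Phi = h + C_\Phi$ is automatically an equilibrium solution, so it suffices to compute the first variation of $H_\Phi$ with respect to each independent field $(u_S, u_T, \theta_S)$ and to read off the bulk Euler--Lagrange equations together with the boundary matching conditions on each connected component $\partial\Omega_i$. The final representation of $\Phi$ in terms of the Bernoulli function $K$ is then a separate computation that uses the steady form of \eqref{Ism-eq:1}--\eqref{Ism-eq:3}.

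The key preliminary calculation is the first variation of $q = \text{curl}(v_S)\cdot\hat{y}$. Since $v_S = s u_S - (u_T + fx)\nabla\theta_S$, one finds
\[
\delta q \;=\; s\,\text{curl}(\delta u_S)\cdot\hat{y} \;-\; \nabla\delta u_T\times\nabla\theta_S \;-\; \nabla(u_T+fx)\times\nabla\delta\theta_S,
\]
with $a\times b := a_1 b_2 - a_2 b_1$ the planar scalar cross product on the slice. I would then integrate $\int_\Omega \Phi'(q)\,\delta q\,\diff V$ by parts. For the $u_S$ piece, the two-dimensional Stokes identity $\int_\Omega \Phi'(q)\,\text{curl}(\delta u_S)\cdot\hat{y}\,\diff V = \int_\Omega \text{curl}(\Phi'(q)\hat{y})\cdot\delta u_S\,\diff V + \oint_{\partial\Omega}\Phi'(q)\,\delta u_S\cdot \diff s$ transfers the curl onto $\Phi'(q)\hat{y}$; combined with the kinetic term the bulk variation becomes $\int (u_S + s\,\text{curl}(\Phi'(q)\hat{y}))\cdot\delta u_S\,\diff V$, so modulo an irrelevant gradient (absorbed by the pressure multiplier) stationarity forces \eqref{intro:conditions:equi:2}. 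The resulting boundary integral combines with the term $s a_i\oint_{\partial\Omega_i}\delta u_S\cdot \diff s$ coming from the variation of the loop integrals, and since $\delta u_S$ is arbitrary tangential on each $\partial\Omega_i$ this yields \eqref{intro:conditions:equi:1}. For the $u_T$ and $\theta_S$ variations I would use the planar product rule $\int_\Omega \Phi'(q)\,\nabla f\times\nabla g\,\diff V = -\int_\Omega f\,\nabla\Phi'(q)\times\nabla g\,\diff V + \text{bdy}$ together with the identity $\nabla\theta_S\times\nabla\Phi'(q) = \text{curl}(\Phi'(q)\hat{y})\cdot\nabla\theta_S$, reproducing exactly \eqref{intro:conditions:equi:3}--\eqref{intro:conditions:equi:4}; the boundary terms generated here cancel automatically once $a_i$ has been fixed above.

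For the Bernoulli representation, I would take the scalar product of steady \eqref{Ism-eq:1} with $u_{Se}$, multiply \eqref{Ism-eq:2} by $u_{Te}$, and add them, using \eqref{Ism-eq:3} to rewrite $\gamma_S(\theta_{Se}u_{Sz} - s u_{Te}) = u_{Se}\cdot\nabla(\gamma_S\theta_{Se})$; this yields $u_{Se}\cdot\nabla K = 0$ for $K = \tfrac12(|u_{Se}|^2 + u_{Te}^2) - \gamma_S\theta_{Se} + p$. Since the steady potential vorticity also satisfies $u_{Se}\cdot\nabla q_e = 0$, we conclude $K = K(q_e)$ on streamlines. Combining this with the stream-function interpretation $\psi_e = -s\,\Phi'(q_e)$ of \eqref{intro:conditions:equi:2} and differentiating along streamlines produces a Legendre-type ODE of the form $K(q) = q\Phi'(q) - \Phi(q)$. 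This rearranges to $(\Phi(\lambda)/\lambda)' = -K(\lambda)/\lambda^2$, which integrates to the stated formula $\Phi(\lambda) = \lambda\bigl(\int_\lambda K(t)/t^2\,\diff t + C\bigr)$.

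The main obstacle I foresee is the bookkeeping of boundary terms across the three variations: each integration by parts leaves a line integral on $\partial\Omega$, and these have to interlock cleanly with the prescribed loop integrals $\sum_i a_i\oint_{\partial\Omega_i} v_S\cdot \diff s$ so that a single scalar equation $a_i = \Phi'(q_e)|_{\partial\Omega_i}$ falls out on each connected component $\partial\Omega_i$ and no stray boundary contribution remains to spoil the bulk identities \eqref{intro:conditions:equi:2}--\eqref{intro:conditions:equi:4}. A secondary delicate point is the passage from the full equilibrium system to the Legendre-type ODE for $\Phi$, which requires simultaneously exploiting the stream-function structure inherited from \eqref{intro:conditions:equi:2} and the precise form of $v_S$ when evaluating $K$ on the equilibrium, rather than a purely symbolic manipulation.
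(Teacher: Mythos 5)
Your variational half of the argument is essentially the paper's own proof, merely organized field-by-field instead of through the single quantity $\delta v_S$: the paper writes $\delta q=\text{curl}(\delta v_S)\cdot\hat y$, integrates by parts once using $\text{div}(A\times B)=B\cdot\text{curl}A-A\cdot\text{curl}B$, matches the resulting boundary integral against $\sum_i a_i\int_{\partial\Omega_i}\delta v_S\cdot \diff s$ to obtain \eqref{intro:conditions:equi:1}, and only then expands $\delta v_S=s\,\delta u_S-\delta u_T\nabla\theta_S-(u_T+fx)\nabla\delta\theta_S$. One bookkeeping point you get slightly wrong: the boundary term produced when integrating by parts on $\nabla\delta\theta_S$ does not cancel ``once $a_i$ has been fixed''; it vanishes for a different reason, namely \eqref{assumptionboundary}, i.e.\ $\text{curl}(\Phi'(q_e)\hat y)\cdot n=0$ on $\partial\Omega$, which follows from \eqref{intro:conditions:equi:2} together with $u_{Se}\cdot n=0$. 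That is a self-consistency condition at the critical point, not a consequence of the $a_i$ matching.

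The genuine gap is in the derivation of $\Phi(\lambda)=\lambda\bigl(\int_\lambda K(t)/t^2\,\diff t+C\bigr)$. Your route --- dot the steady momentum equation with $u_{Se}$, multiply the transverse equation by $u_{Te}$, add, and use the temperature equation --- is correct and does yield $u_{Se}\cdot\nabla K=0$ with $K=\tfrac12(|u_{Se}|^2+u_{Te}^2)+p_e-\gamma_S\theta_{Se}$, hence $K=K(q_e)$; this is a more direct derivation of the Bernoulli property than the paper's Lamb-form manipulation. But that projection onto $u_{Se}$ discards exactly the information needed for the functional relation between $K$ and $\Phi$: the component of the steady momentum balance transverse to the streamlines. ``Differentiating along streamlines'' produces only $0=0$, since both $K(q_e)$ and $q_e$ are constant there. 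What is actually required is the full vector identity \eqref{unaecuacion}, $-(q_e/s)\,\hat y\times u_{Se}=\nabla K(q_e)$, whose cross-stream component, combined with \eqref{intro:conditions:equi:2}, gives $q_e\nabla\Phi'(q_e)=\nabla K(q_e)$ and hence $q_e\Phi''(q_e)=K'(q_e)$ --- the differentiated form of your Legendre relation $K=q\Phi'-\Phi$. Without retaining that vector identity (which is precisely what the paper's curl-form computation preserves), the relation $K=q\Phi'-\Phi$ is asserted rather than derived. A minor slip in the same step: $K=\lambda\Phi'-\Phi$ gives $(\Phi(\lambda)/\lambda)'=+K(\lambda)/\lambda^2$, not $-K(\lambda)/\lambda^2$; only with the plus sign does the integration reproduce the stated formula.
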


\begin{proof}
We construct a generalised conserved quantity by
$$H_\Phi = h + C_\Phi = \int_\Omega \left\{ \frac{1}{2} (|u_S|^2 + u_T^2)  - \gamma_S \theta_S \right\} \diff{V} + \int_\Omega \Phi(q) \diff{V} + \sum_{i=0}^n a_i \int_{\partial \Omega_i } v_S \cdot \diff s.$$
Taking the first variation, one obtains
\begin{align*}
\delta H_\Phi(\delta u_S, \delta u_T, \delta \theta_S) &= \int_\Omega  (u_S \cdot \delta u_S + u_T \delta u_T  - \gamma_S \delta \theta_S ) \diff{V} \\
&+ \int_\Omega \Phi'(q) \delta q \diff{V} + \sum_{i=0}^n a_i \int_{\partial \Omega_i } \delta v_S \cdot \diff s:= I_{1}+I_{2}+I_{3}.
\end{align*}
We have that
\begin{align}
I_{2}+I_{3}&= \int_\Omega \Phi'(q) \text{curl}(\delta v_S) \cdot \hat{y} \diff{V} + \sum_{i=0}^n a_i \int_{\partial \Omega_i } \delta v_S \cdot \diff s \nonumber \\
&=\int_\Omega \text{curl}(\Phi'(q) \hat{y}) \cdot \delta v_S \diff{V}- \int_\Omega \text{div}(\Phi'(q) \hat{y} \times \delta v_S) \diff{V} + \sum_{i=0}^n a_i \int_{\partial \Omega_i } \delta v_S \cdot \diff s \nonumber \\
 \label{firstvar:1}&=\int_\Omega \text{curl}(\Phi'(q) \hat{y}) \cdot \delta v_S \diff{V} + \sum_{i=0}^n (a_i - \Phi'(q)_{|\partial \Omega_i}) \int_{\partial \Omega_i} \delta v_S \cdot \diff s, 
\end{align}
so we obtain $a_i = \Phi'(q_e | \partial \Omega_i),$ $i=1, \ldots, n.$ Here we have used the well-known calculus formula \[\text{div}(A \times B) = B \cdot \text{curl } A - A \cdot \text{curl } B, \quad \text{with } A,B \ \text{vector fields,}\] and the Divergence Theorem.
 We can rewrite the first term in \eqref{firstvar:1} as
\begin{align}
\int_\Omega \text{curl}(\Phi'(q) \hat{y}) \cdot \delta v_S \diff{V} &= \int_\Omega \text{curl}(\Phi'(q) \hat{y}) s \cdot \delta u_S \diff{V} - \int_\Omega \text{curl}(\Phi'(q) \hat{y}) \cdot \nabla \theta_S \delta u_T \diff{V} \nonumber \\
 \label{firstvar:2}&- \int_\Omega \text{curl}(\Phi'(q) \hat{y}) (u_T + f x) \cdot \nabla \delta \theta_S \diff{V}.
\end{align}
Notice that the last term in \eqref{firstvar:2}
\begin{align*}
 \int_\Omega \text{curl}(\Phi'(q) \hat{y}) (u_T + f x) \cdot \nabla \delta \theta_S \diff{V} = - \int_\Omega \text{curl}(\Phi'(q) \hat{y}) \cdot (\nabla u_T + f\hat{x}) \delta \theta_S \diff{V},
 \end{align*}
where we have taken into account that $\text{div } \text{curl } g = 0,$ for any smooth vector field $g,$ and we have needed the condition 
\begin{equation}\label{assumptionboundary}
\int_\Omega \text{div}(\text{curl}(\Phi'(q) \hat{y})(u_T + fx) \delta \theta_S) \diff{V} = \int_{\partial \Omega} (\text{curl}(\Phi'(q) \hat{y})(u_T + fx) \delta \theta_S) \cdot n \diff{s} =0.
\end{equation}
It is easily checked that \eqref{assumptionboundary} is guaranteed if $\text{curl}(\Phi'(q_e) \hat{y}) \cdot n =0$ at the boundary, which holds since
\[s \text{ curl}(\Phi'(q_e) \hat{y}) = -u_{Se},\]
and due to the boundary condition \eqref{Ism-eq:bdy}. Collecting all our previous computations we have that
\begin{align*}
\delta H_\Phi(\delta u_S, \delta u_T, \delta \theta_S) &= \int_\Omega  (u_S \cdot \delta u_S + u_T \delta u_T  - \gamma_S \delta \theta_S ) \diff{V} + \int_\Omega \text{curl}(\Phi'(q) \hat{y}) \cdot \delta v_S \diff{V} \\
&= \int_\Omega  (u_S \cdot \delta u_S + u_T \delta u_T  - \gamma_S \delta \theta_S ) \diff{V} + \int_\Omega \text{curl}(\Phi'(q) \hat{y}) s \cdot \delta u_S \diff{V} \\
&- \int_\Omega \text{curl}(\Phi'(q) \hat{y}) \cdot \nabla \theta_S \delta u_T \diff{V} +   \int_\Omega \text{curl}(\Phi'(q) \hat{y}) \cdot (\nabla u_T + f\hat{x}) \delta \theta_S \diff{V},
\end{align*}
obtaining the first part of the theorem. 
\begin{remark}
Note that the equations obtained for the variables after imposing the first variation of $H_\Phi$ to vanish satisfy the equilibrium conditions for the solutions of the ISM \eqref{Ism-eq:1}-\eqref{Ism-eq:4}, as explained in Theorem \ref{equilibrio}.  
\end{remark}
Let us now show how to rewrite the ISM equations in curl form at equilibrium, which is useful if we want to characterise its equilibrium solutions in terms of a Bernoulli function. We define $\omega_S = \text{curl}(u_S)$. One can express equation \eqref{Ism-eq:1} as
$$\frac{\partial u_S}{\partial t}  = - \omega_S \times u_S  - \nabla (p + |u_S|^2/2) + f u_T \hat{x}  + \frac{g}{\theta_0} \theta_S \hat{z}.$$\
Collecting all the gradient terms on the right-hand side, we have
\begin{align}
\frac{\partial u_S}{\partial t}  &= - \omega_S \times u_S  - \nabla (p + |u_S|^2/2) + f u_T \nabla x  + \frac{g}{\theta_0} \theta_S \nabla z \nonumber \\
\label{importante1}
&=- \omega_S \times u_S  - \nabla \left(p + |u_S|^2/2- f u_T x  -  \frac{g}{\theta_0} \theta_S z \right) - fx \nabla u_T  -\frac{g}{\theta_0} z\nabla  \theta_S.
\end{align}
We calculate
\begin{equation*} 
- (\nabla \theta_S \times \nabla (u_T + fx)) \times u_S = u_S \cdot \nabla (u_T + fx) \nabla \theta_S - (u_S \cdot \nabla \theta_S) \nabla(u_T +fx).
\end{equation*} 
By using \eqref{Ism-eq:2} and \eqref{Ism-eq:3}, one can obtain the relation
\begin{equation}\label{importante2}
 - (\nabla \theta_{Se} \times \nabla (u_{Te} + fx))/s \times u_{Se} 
= -\frac{g}{\theta_0} z \nabla \theta_{Se} -  fx \nabla u_{Te} + \nabla(u_{Te}^2/2 + u_{Te} fx).
\end{equation}
Now, put together $(\ref{importante1})$ and $(\ref{importante2})$ to derive 
\begin{equation} \label{unaecuacion}
- (q_e /s) \times u_{Se}  - \nabla \left(p_e + |u_{Se}|^2/2 + u_{Te}^2/2 -  \frac{g}{\theta_0} z \theta_{Se} \right)  = 0.
\end{equation}
By dotting \eqref{unaecuacion} against $u_{Se},$ one obtains
$$u_{Se} \cdot \nabla \left(p_e + |u_{Se}|^2/2 + u_{Te}^2/2  - \gamma_S \theta_{Se} \right)  = 0,$$
which, together with the equation for conservation of potential vorticity \eqref{generalisedvorti}, yields the following Bernoulli condition for the ISM:
$$p_e + |u_{Se}|^2/2 + u_{Te}^2/2  -\gamma_S \theta_{Se} = K(q_e).$$
Here $K$ stands for a real differentiable function. Note that this becomes the Bernoulli condition for incompressible 2D Euler upon substitution of $\theta_S=0,$ $u_T =0.$ To provide a explicit formula for the function $K$, we express \eqref{unaecuacion} as
$$q_e \hat{y} \times u_{Se} = -s \nabla K(q_e).$$
Applying cross product with $\hat{y}$ on the left-hand side we have that
$$-q_e u_{Se} = - s \hat{y} \times \nabla K(q_e).$$
Therefore, by taking into account the equilibrium condition \eqref{intro:conditions:equi:2},
$$ q_e \text{curl}(\Phi'(q_e) \hat{y}) = - \hat{y} \times \nabla K(q_e).$$
Rewrite this as
$$ q_e \nabla^T (\Phi'(q_e)) = \nabla^T (K(q_e)),$$
and applying the chain rule 
$$ q_e \Phi''(q_e) \nabla^T q_e =  K'(q_e) \nabla^T q_e,$$
so we infer that
$$ q_e \Phi''(q_e)  =  K'(q_e).$$
Hence, upon integration, we find that the function $\Phi$ can be expressed as
$$\Phi(\lambda) = \lambda \left( \int_\lambda \frac{K(t)}{t^2} \diff t + C  \right) ,$$
where $C$ is an integration constant. Also, since $\nabla K(q) = K'(q) \nabla q,$ we obtain
\begin{eqnarray}  \label{relacionfun}
\Phi''(q_e)= \frac{(\hat{y} \times \nabla q_e) \cdot u_{Se}}{ |\nabla q_e|^2}.
\end{eqnarray}

\end{proof}
\begin{remark}
Relation \eqref{relacionfun} is fundamental when deriving formal and nonlinear stability conditions for the ISM.
\end{remark}

\subsection{Formal stability conditions for the ISM}
To study the formal stability of the ISM around our restricted class of equilibrium solutions, we need to calculate the second variation of $H_\Phi,$ which reads
$$\delta^2 H_\Phi = \int_\Omega \left\{ |\delta u_S|^2 + (\delta u_T)^2 \right\} \diff V + \int_\Omega \Phi''(q) (\delta q)^2 \diff V.$$
Therefore, one obtains a straightforward (albeit quite general) condition for formal stability, namely
$$\Phi''(q_e) >0.$$

\begin{theorem}[Formal stability conditions for the ISM] \label{formalth}
An equilibrium point of the ISM belonging to the restricted class specified in Theorem \ref{th:equilibria} is formally stable if $\Phi''(q_e)>0.$ This is, an equilibrium point is formally stable if 
$$\frac{(\hat{y} \times \nabla q_e) \cdot u_{Se}}{ |\nabla q_e|^2} >0.$$
\end{theorem}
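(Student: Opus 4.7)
The plan is to verify directly the two ingredients of the formal-stability definition from Section~\ref{2}: vanishing of the first variation of a conserved functional at the equilibrium, and sign-definiteness of its second variation there. Theorem~\ref{equilibrio} and the explicit first-variation computation in the proof of Theorem~\ref{th:equilibria} already guarantee that $DH_\Phi$ vanishes on the equilibria characterised by \eqref{intro:conditions:equilibrium}. The real work is therefore to establish the displayed second variation
\[\delta^2 H_\Phi = \int_\Omega \{|\delta u_S|^2 + (\delta u_T)^2\} \diff V + \int_\Omega \Phi''(q_e)(\delta q)^2 \diff V\]
and then to exploit the identity \eqref{relacionfun}.

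First I would justify the second-variation formula carefully. The kinetic part $\tfrac{1}{2}(|u_S|^2 + u_T^2)$ of $H_\Phi$ immediately gives the manifestly quadratic term $|\delta u_S|^2 + (\delta u_T)^2$, while the affine term $-\gamma_S\theta_S$ is linear in $\theta_S$ and contributes nothing at second order. For the Casimir $\int_\Omega \Phi(q)\diff V$ the chain rule produces $\Phi''(q)(\delta q)^2 + \Phi'(q)\,\delta^2 q$, and the boundary contribution expands as $\sum_i a_i \int_{\partial\Omega_i} \delta^2 v_S\cdot \diff s$. The plan is to show that the piece involving $\Phi'(q)\,\delta^2 q$ is exactly cancelled by the $\delta^2 v_S$ boundary contribution, via a Stokes-type integration by parts that mirrors \eqref{firstvar:1} and uses the equilibrium boundary identification $a_i = \Phi'(q_e)|_{\partial\Omega_i}$ from \eqref{intro:conditions:equi:1}, together with $u_{Se}\cdot n = 0$ on $\partial\Omega$.

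Once the formula is in hand, the remainder is algebraic. The first integrand is nonnegative by construction, and pointwise positivity of $\Phi''(q_e)$ makes the second integrand nonnegative as well, so $\delta^2 H_\Phi \geq 0$ with equality forcing $\delta u_S = 0$, $\delta u_T = 0$, and $\delta q = 0$; this delivers the definiteness required by the formal-stability definition. To convert the abstract condition $\Phi''(q_e) > 0$ into the geometric criterion stated in the theorem I would simply substitute the identity
\[\Phi''(q_e) = \frac{(\hat{y} \times \nabla q_e)\cdot u_{Se}}{|\nabla q_e|^2}\]
from \eqref{relacionfun}, which was derived in the proof of Theorem~\ref{th:equilibria} by matching the equilibrium curl equation with the Bernoulli relation.

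The main technical obstacle will be the cancellation between $\int_\Omega \Phi'(q_e)\,\delta^2 q \diff V$ and the second-order expansion of the boundary circulation $\sum_i a_i\int_{\partial\Omega_i}\delta^2 v_S\cdot\diff s$; this requires the same interplay between the curl structure of $\delta v_S$, Stokes' theorem, and the equilibrium identification of the $a_i$'s that powered the first-variation computation. After that, the algebraic conclusion and the substitution of \eqref{relacionfun} are immediate.
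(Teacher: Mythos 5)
Your proposal follows the paper's route exactly: the paper likewise takes the first variation to vanish by Theorem~\ref{th:equilibria}, writes down the second variation
\[
\delta^2 H_\Phi = \int_\Omega \{|\delta u_S|^2 + (\delta u_T)^2\}\,\diff V + \int_\Omega \Phi''(q)(\delta q)^2\,\diff V,
\]
reads off the condition $\Phi''(q_e)>0$, and substitutes \eqref{relacionfun}. The one point where you go beyond the paper is also the one point where your argument as stated does not quite close: you claim that $\int_\Omega \Phi'(q)\,\delta^2 q\,\diff V$ is \emph{exactly} cancelled by the boundary term $\sum_i a_i\int_{\partial\Omega_i}\delta^2 v_S\cdot\diff s$. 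Running the same Stokes argument as in \eqref{firstvar:1} with $\delta v_S$ replaced by $\delta^2 v_S$ and using \eqref{intro:conditions:equi:1} does kill the boundary mismatch, but it leaves the bulk term $\int_\Omega \text{curl}(\Phi'(q_e)\hat{y})\cdot\delta^2 v_S\,\diff V$; since $v_S = s u_S - (u_T+fx)\nabla\theta_S$ is quadratic in the fields, $\delta^2 v_S = -2\,\delta u_T\,\nabla\delta\theta_S \neq 0$, and by \eqref{intro:conditions:equi:2} this residue equals $\tfrac{2}{s}\int_\Omega \delta u_T\, u_{Se}\cdot\nabla\delta\theta_S\,\diff V$, a cross term that does not vanish in general. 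The paper sidesteps this by simply asserting the second-variation formula (in effect varying $q$ as an independent quantity, as is customary in the Energy--Casimir method of \cite{HoMaRaWe1985}); if you insist on deriving it from variations of $(u_S,u_T,\theta_S)$, you must either account for this cross term or justify the choice of independent variations.
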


\subsection{Nonlinear stability for the ISM}
To derive nonlinear stability conditions for the ISM, we follow the Energy-Casimir algorithm \ref{stepa}-\ref{stepf}. We need to construct two quadratic forms $Q_1$ and $Q_2$ depending on the variables $\delta u_S,$ $\delta \theta_S,$ $\delta u_T.$ Note that $Q_1$ has to satisfy
\begin{align*}
Q_1(\Delta u_S, \Delta u_T, \Delta \theta_S) &\leq h(u_{Se} + \Delta u_S, u_{Te} + \Delta u_T, \theta_{Se} + \Delta \theta_S) - h(u_{Se}, u_{Te}, \theta_{Se}) \\
&- Dh(u_{Se}, u_{Te}, \theta_{Se}) \cdot (\Delta u_S, \Delta u_T, \Delta \theta_S).
\end{align*}
Since the conserved Hamiltonian $h$ is quadratic plus a linear term on $\theta_S$ (see \eqref{EPEady-erg1}), this can be done by choosing $Q_1 = h + \gamma_S \theta_S.$ Next, since $Q_2$ has to satisfy
\begin{align*}
Q_2(\Delta u_S, \Delta u_T, \Delta \theta_S) &\leq C_\Phi(u_{Se} + \Delta u_S, u_{Te} + \Delta u_T, \theta_{Se} + \Delta \theta_S) - C_\Phi (u_{Se}, u_{Te}, \theta_{Se}) \\
& - D C_\Phi (u_{Se}, u_{Te}, \theta_{Se}) \cdot (\Delta u_S, \Delta u_T, \Delta \theta_S),
\end{align*}
we select
$$Q_2 = \lambda_1 \int_\Omega (\Delta q)^2 \diff V,$$
where $\lambda_1 \in \mathbb{R}$ is such that 
\begin{center}
$\lambda_1 \leq \Phi''(x),$ \quad for all $x \in \mathbb{R}.$ 
\end{center}
Condition \ref{stepd} in the Energy-Casimir algorithm requires 
\[ Q_{1}+Q_{2}= h +  \lambda_1 \int_\Omega (\Delta q)^2 \diff V > 0, \]
which is guaranteed for instance if $\lambda_1 >0.$ We point out that $H_\Phi$ is continuous with respect to the defined norm
$$\norm{(\delta u_S, \delta u_T, \delta \theta_S)}_{Q_1 + Q_2}=\int_\Omega \left \{  \frac12 |\delta u_S|^2 + \frac12 (\delta u_T)^2  \right \} \diff V +  \lambda_1 \int_\Omega (\delta q)^2 \diff V,$$
provided $\Phi''(x) \leq \lambda_2 < \infty,$ for all $x \in \mathbb{R}.$ \\\\
Finally, we show the estimate required in \ref{stepe}:
\begin{align*}
Q_1 + Q_2 &= \int_\Omega \left\{ |\Delta u_S(t)|^2 + (\Delta u_T(t))^2 \right\} \diff V + \lambda_1 \int_\Omega  (\Delta q(t))^2 \diff V \\
 &\leq \int_\Omega \left\{ \frac{1}{2} (|u_{S}(0)|^2 + u_{T}(0)^2)  - \gamma_S \theta_{S}(0) \right\} \diff{V} + \int_\Omega \Phi(q(0)) \diff{V} \\
&- \int_\Omega \left\{ \frac{1}{2} (|u_{Se}|^2 + u_{Te}^2)  - \gamma_S \theta_{Se} \right\} \diff{V} 
- \int_\Omega \Phi(q_e) \diff{V} .
\end{align*}
Taking into account the construction above, we can derive the following theorem.
\begin{theorem}[Nonlinear stability conditions for the ISM]
We can define a norm $Q$ on $\mathfrak{X}(\Omega) \circled{s} \mathcal{F}(\Omega) \times \wedge^2 (\Omega)$ such that an equilibrium point of the ISM belonging to the restricted class specified in Theorem \ref{th:equilibria} is nonlinearly stable with respect to $Q$ if
$$0 < \lambda_1 \leq \frac{(\hat{y} \times \nabla q_e) \cdot u_{Se}}{ |\nabla q_e|^2} \leq \lambda_2 < \infty.$$
\end{theorem}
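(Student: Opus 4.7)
My approach is a direct application of the Energy-Casimir algorithm from Section \ref{2} (steps \ref{stepa}--\ref{stepf}) with the generalised conserved quantity $H_\Phi = h + C_\Phi$ already identified in Theorem \ref{th:equilibria}. Steps \ref{stepa}--\ref{stepc} are essentially done: $h$ and $C_\Phi$ are conserved along the ISM flow, and by Theorem \ref{th:equilibria} the requirement $DH_\Phi(u_e)=0$ is precisely what selects the restricted class of equilibria under consideration, so the equilibrium condition \eqref{intro:conditions:equilibrium} is built in.

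The core computation is step \ref{stepd}. Since $h = \int_\Omega (\tfrac12|u_S|^2 + \tfrac12 u_T^2 - \gamma_S \theta_S)\diff V$ is a quadratic-plus-linear functional of the state variables, its second order Taylor expansion terminates exactly at
\begin{equation*}
h(u_e+\Delta u)-h(u_e)-Dh(u_e)\cdot \Delta u \;=\; \tfrac12 \int_\Omega \bigl(|\Delta u_S|^2 + (\Delta u_T)^2\bigr) \diff V ,
\end{equation*}
so I take $Q_1(\Delta u) := \tfrac12 \int_\Omega (|\Delta u_S|^2 + (\Delta u_T)^2) \diff V$, with equality rather than just inequality. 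For the Casimir, integral-form Taylor's theorem yields
\begin{equation*}
C_\Phi(q_e+\Delta q)-C_\Phi(q_e)-\Phi'(q_e)\Delta q \;=\; \int_\Omega\!\int_0^1 (1-\tau)\,\Phi''(q_e+\tau\Delta q)(\Delta q)^2 \diff \tau \diff V,
\end{equation*}
so the hypothesis $\Phi''(\cdot)\ge \lambda_1>0$ lets me set $Q_2(\Delta u) := \tfrac12 \lambda_1 \int_\Omega (\Delta q)^2 \diff V$, and positive-definiteness of $Q := Q_1+Q_2$ provides the norm on $\mathfrak{X}(\Omega)\circled{s}\mathcal{F}(\Omega)\times \wedge^2(\Omega)$ called for in step \ref{stepf}.

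For step \ref{stepe}, I would combine conservation of $H_\Phi$ along ISM trajectories with the vanishing of the first variation $DH_\Phi(u_e)\cdot \Delta u(t) = 0$, obtaining
\begin{equation*}
Q_1(\Delta u(t))+Q_2(\Delta u(t)) \;\le\; H_\Phi(u(0))-H_\Phi(u_e).
\end{equation*}
To upgrade this into nonlinear stability via the Arnold stability theorem, I still need continuity of $H_\Phi$ at $u_e$ in the $Q$-norm. By the remark following that theorem, it suffices to produce matching quadratic upper bounds on the two Taylor remainders: for $h$ the upper bound is immediate, while for $C_\Phi$ the hypothesis $\Phi''(\cdot)\le \lambda_2<\infty$ gives $C_\Phi(u_e+\Delta u)-C_\Phi(u_e)-DC_\Phi(u_e)\cdot\Delta u \le \tfrac12 \lambda_2 \int_\Omega (\Delta q)^2\diff V$. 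Translating the uniform two-sided bounds on $\Phi''$ into the stated quotient via the identity \eqref{relacionfun}, namely $\Phi''(q_e) = (\hat{y}\times\nabla q_e)\cdot u_{Se}/|\nabla q_e|^2$, delivers precisely the hypothesis of the theorem.

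The subtle point I expect to be the main obstacle is that the stated condition $0<\lambda_1 \le (\hat y \times \nabla q_e)\cdot u_{Se}/|\nabla q_e|^2 \le \lambda_2 <\infty$ a priori only bounds $\Phi''$ at the equilibrium value $q_e$, whereas the Taylor-remainder estimates require control of $\Phi''$ over the full range of perturbed potential vorticities $q_e + \tau\Delta q$ visited by the dynamics. The theorem should therefore be read as implicitly extending $\Phi''$ by the same uniform bounds on all of $\mathbb{R}$, an extension made consistent by the fact that $\Phi$ is free to be chosen in a parametric family of Casimirs; once this extension is fixed, the norm $Q$ genuinely controls perturbations and the Arnold stability theorem applies.
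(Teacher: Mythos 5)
Your proposal is correct and follows essentially the same route as the paper: the Energy--Casimir algorithm with $Q_1$ equal to the quadratic part of $h$, $Q_2$ a multiple of $\int_\Omega (\Delta q)^2 \diff V$ built from the two-sided bound on $\Phi''$, and the identity \eqref{relacionfun} translating that bound into the stated quotient condition. Your closing remark about needing the bounds on $\Phi''$ over the whole range of perturbed potential vorticities, not just at $q_e$, is exactly how the paper handles it (it requires $\lambda_1 \le \Phi''(x) \le \lambda_2$ for all $x \in \mathbb{R}$), so there is no gap.
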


\section{Local well-posedness of the ISM}\label{3}
In this section, we establish the local existence and uniqueness of solutions of \eqref{Ism-eq:1}-\eqref{Ism-eq:4} on bounded domains $\Omega\subset\mathbb{R}^{2}$ with smooth boundary $\partial\Omega$ satisfying the boundary condition \eqref{Ism-eq:bdy}. We will assume that the constants $f=s=\theta_{0}=g=1$ without loss of generality. Therefore, the equations can be written as
\begin{subequations}
\begin{align}
 \label{ISM1} \partial_t u_{S} + u_{S}\cdot\nabla u_{S}
-u_{T}{\hat{x}}
= -\nabla p 
 + \theta_{S}\hat{z},  \\
\label{ISM2}\partial_t u_{T} + u_{S}\cdot\nabla u_{T}
+ u_{S}\cdot\hat{{x}}=-z, \\
 \label{ISM3} \partial_t \theta_{S} + u_{S}\cdot\nabla\theta_{S} + u_{T} = 0,\\
 \label{ISM4}  \nabla \cdot u_{S} =  0, 
\end{align}
\end{subequations}
with boundary condition
\begin{equation}\label{ISM5}
u_S \cdot n = 0 \text{ \ on  }  \partial \Omega.
\end{equation}
We prove the following theorem:
\begin{theorem}\label{mainth}
For $s > 2$ integer and initial data $(u^{0}_{S},u^{0}_{T},\theta^{0}_{S})\in H^{s}_\star(\Omega) \times H^{s} (\Omega) \times H^{s}(\Omega)$, there exists a time $T=T(\norm{(u^{0}_{S},u^{0}_{T},\theta^{0}_{S})}_{H^{s}})>0$ such that the equations \eqref{ISM1}-\eqref{ISM4} with boundary condition \eqref{ISM5} have a unique solution $(u_{S},u_{T},\theta_{S})$ in $C([0,T]; H^s_\star \times H^s \times H^s).$ 
\end{theorem}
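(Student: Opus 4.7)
The plan is to apply the Kato-Lai existence theorem (Theorem \ref{abstractevolution}) to recast the ISM as an abstract evolution equation on an appropriate Sobolev triplet, and then establish uniqueness and strong continuity by separate arguments. First I would apply the Leray projector $\mathcal{P}$ from Lemma \ref{HelmHodge} to equation \eqref{ISM1} in order to remove the pressure and encode incompressibility directly in the function space. The system becomes $u_t + A(t,u) = 0$ for $u = (u_S, u_T, \theta_S)$, where
\begin{equation*}
A(t, u) = \big( \mathcal{P}(u_S \cdot \nabla u_S - u_T \hat{x} - \theta_S \hat{z}),\ u_S \cdot \nabla u_T + u_S \cdot \hat{x} + z,\ u_S \cdot \nabla \theta_S + u_T \big).
\end{equation*}
I would then choose the admissible triplet $V = H^{s+1}_\star \times H^{s+1} \times H^{s+1}$, $H = H^s_\star \times H^s \times H^s$, and $X = H^{s-1}_\star \times H^{s-1} \times H^{s-1}$, with the natural $H^s$ pairing continuously extended to $V \times X$. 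The Sobolev embedding $H^s(\Omega) \hookrightarrow W^{1,\infty}(\Omega)$, available for $s > 2$ in two dimensions, is then enough to verify that $A : H \to X$ is well-defined and sequentially weakly continuous.

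The heart of the proof is the dissipation-type inequality
\begin{equation*}
(u, A(t,u))_{H^s} \geq -\beta(\norm{u}_{H^s}^2), \qquad u \in V.
\end{equation*}
To establish this, I would apply $D^\alpha$ with $|\alpha| \leq s$ to each component of the system, pair with the corresponding $D^\alpha$ of the variable, and sum. The dangerous transport contributions $\int D^\alpha(u_S \cdot \nabla f) \cdot D^\alpha f \,\diff V$ are rewritten as
\begin{equation*}
\int (u_S \cdot \nabla D^\alpha f) \cdot D^\alpha f \,\diff V + \int ([D^\alpha, u_S \cdot \nabla] f) \cdot D^\alpha f \,\diff V.
\end{equation*}
The first integral vanishes after integration by parts using $\nabla \cdot u_S = 0$ together with the boundary condition $u_S \cdot n = 0$; the second is controlled via the commutator estimate in Lemma \ref{calculusineq}\eqref{calculusineq:2} by $\norm{u_S}_{W^{1,\infty}}\norm{f}_{H^s} + \norm{u_S}_{H^s}\norm{\nabla f}_{L^\infty}$, which after Sobolev embedding is $\lesssim \norm{u}_{H^s}^3$. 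The linear terms $u_T \hat{x}$, $\theta_S \hat{z}$, $u_S \cdot \hat{x}$, $u_T$, and $z$ contribute at most quadratically via Cauchy-Schwarz. One can therefore take $\beta(r) \lesssim 1 + r^{3/2}$, and the associated scalar ODE $\rho' = 2\beta(\rho)$ supplies the existence time $T = T(\norm{u^0}_{H^s})$ asserted in the theorem, yielding a solution $u \in C_w([0,T]; H) \cap C_w^1([0,T]; X)$.

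For uniqueness, I would subtract two solutions with the same initial data, test the resulting system for $(\delta u_S, \delta u_T, \delta \theta_S)$ against itself in $L^2$, bound the interaction terms using $H^s \hookrightarrow W^{1,\infty}$, and close via Gronwall. Strong continuity in $H^s$ (upgrading from $C_w$ to $C$) then follows from continuity of $t \mapsto \norm{u(t)}_{H^s}^2$, obtained by a standard Bona-Smith mollification argument combined with uniqueness. The main obstacle I anticipate is the interaction of the Leray projector with higher-order Sobolev norms on a bounded domain: unlike on the torus or the full space, $\mathcal{P}$ does not commute with $D^\alpha$, so one must appeal to regularity of the Stokes problem on $\Omega$ to guarantee that $\mathcal{P}$ preserves $H^s_\star$ and that the top-order pressure contribution in the energy estimate for $u_S$ can be controlled without loss of derivatives. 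A secondary technical point is the boundary integrals produced by integration by parts at top order, since $D^\alpha u_S$ is not tangential along $\partial \Omega$; these are handled by a tangential/normal decomposition near $\partial \Omega$ (using the Trace Theorem \ref{traceth}) or by regularizing the data and passing to the limit.
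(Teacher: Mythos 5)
Your overall strategy (Leray projection plus the Kato--Lai theorem, $L^2$ energy estimate plus Gr\"onwall for uniqueness) is the same as the paper's, but there is a genuine gap at the point you pass over most quickly: the admissible triplet. You take $V=H^{s+1}_\star\times\cdots$, $H=H^s_\star\times\cdots$, $X=H^{s-1}_\star\times\cdots$ ``with the natural $H^s$ pairing continuously extended to $V\times X$.'' On a bounded domain this extension is exactly the nontrivial step: writing $(v,u)_{H^s}=\sum_{|\alpha|\le s}\int D^\alpha v\,D^\alpha u$ and integrating by parts to move derivatives off $u$ produces boundary terms, and one must exhibit a \emph{continuous, nondegenerate} bilinear form on $V\times X$ that agrees with $(\cdot,\cdot)_{H^s}$ on $V\times H$; this does not come for free. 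The paper sidesteps this by taking $X=L^2\times L^2\times L^2$ and $V$ to be the domain of a self-adjoint elliptic operator $S$ with Neumann boundary conditions, so that $\langle v,u\rangle=(Sv,u)_{L^2}$, and it explicitly avoids building the divergence-free and tangency constraints into the triplet ``due to the extra complications that arise from the divergence and boundary conditions in order to define a suitable space $V$.'' The price the paper pays is that $H$ is then not contained in $H^s_\star$, so it must work with the modified system \eqref{ISMP5}--\eqref{ISMP7} (with the correction term $-\mathcal{Q}F(\mathcal{P}u_S,\mathcal{P}u_S)$) and prove separately that $\mathcal{Q}u_S\equiv 0$ propagates from the initial data. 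Your constrained triplet would avoid that detour, but only if you actually construct $V$ and the pairing inside $H^s_\star$ --- which is the hard part, not a remark. A smaller inaccuracy: the boundary term you worry about at top order involves the transport field $u_S$ itself (via $\int u_S\cdot\nabla\bigl(|D^\alpha f|^2/2\bigr)\,\diff V$), which \emph{is} tangential, so no tangential/normal decomposition of $D^\alpha u_S$ is needed there.

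The second divergence is the upgrade from $C_w([0,T];H^s)$ to $C([0,T];H^s)$. You invoke a Bona--Smith mollification argument; the paper instead proves norm continuity at $t=0$ from the Kato--Lai bound $\|u(t)\|_H^2\le\rho(t)$, deduces right-continuity on $[0,T]$ by uniqueness, and then --- because the ISM is \emph{not} time-reversible owing to the forcing $-z$ and the linear couplings --- obtains left-continuity via the specific substitution $\tilde u_S(X,t)=u_S(-X,-t)$, $\tilde u_T(X,t)=-u_T(-X,-t)$, $\tilde\theta_S(X,t)=-\theta_S(-X,-t)$, which maps solutions to solutions of a structurally identical system. Your route is viable in principle, but Bona--Smith on a bounded domain is delicate (mollification does not preserve $\nabla\cdot u_S=0$ or $u_S\cdot n=0$ near $\partial\Omega$), so if you keep it you should say how the approximating data are constructed; otherwise the paper's reflection trick is the cleaner way to close this step.
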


Before starting with the proof of Theorem \ref{mainth}, we project our equations \eqref{ISM1}-\eqref{ISM4} by using the Leray's projector (see Lemma \ref{HelmHodge}) $\mathcal{P}$
into the following new system of equations
\begin{subequations}
\begin{align}
\label{ISM1P} \partial_t u_{S} + \mathcal{P} (u_{S}\cdot\nabla u_{S}) 
-\mathcal{P} (u_{T}\hat{x}) &= \mathcal{P} (\theta_{S}\hat{z}) ,  \\
 \label{ISM2P}\partial_t u_{T} + u_{S}\cdot\nabla u_{T}
+ u_{S}\cdot\hat{{x}}&= -z,  \\
\label{ISM3P} \partial_t \theta_{S} + u_{S}\cdot\nabla\theta_{S} + u_{T}  &= 0, \\
\label{ISM4P} \mathcal{P} u_{S} &= u_{S}.
\end{align}
\end{subequations}
We will show that we can go from \eqref{ISM1P}-\eqref{ISM4P} to \eqref{ISM1}-\eqref{ISM4} by solving a Poisson problem for the pressure. The following lemma is well-known \cite{Lion},

\begin{lemma}\label{Neumann}(Neumann problem) Given $f\in W^{k,p}(\Omega)$ for $k \in \mathbb{N}$, and $g\in W^{k+1-\frac{1}{p},p}(\partial\Omega)$ satisfying the compatibility condition
\[\int_{\Omega} f \diff V=\int_{\partial\Omega} g \diff V,\]
there exists $\phi\in W^{k+2,p}(\Omega)$ satisfying
\begin{align}
\begin{split}
\Delta \phi &=f, \text{ \ in \ } \Omega, \\
\frac{\partial\phi}{\partial n} &=g, \text{ \ on \ } \partial\Omega.
\end{split}
\end{align}
Moreover, 
\[ \norm{\nabla \phi}_{W^{k+1,p}(\Omega)}\lesssim \norm{f}_{W^{k,p}(\Omega)} + \norm{g}_{W^{k+1-\frac{1}{p},p}(\partial\Omega)} .\]
\end{lemma}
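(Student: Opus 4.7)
The plan is to reduce the inhomogeneous boundary data problem to a homogeneous Neumann problem and then invoke standard elliptic regularity. First, by surjectivity of the normal-trace map $W^{k+2,p}(\Omega) \to W^{k+1-1/p,p}(\partial\Omega)$ (built from Theorem \ref{traceth} via local straightening of $\partial\Omega$ and a bounded right inverse), I would pick a lifting $\psi \in W^{k+2,p}(\Omega)$ with $\partial_n \psi = g$ on $\partial\Omega$ and $\norm{\psi}_{W^{k+2,p}(\Omega)} \lesssim \norm{g}_{W^{k+1-1/p,p}(\partial\Omega)}$. Writing $\phi = \tilde\phi + \psi$, the problem reduces to
\[
\Delta \tilde\phi = f - \Delta\psi =: \tilde f \text{ \ in \ } \Omega, \qquad \partial_n \tilde\phi = 0 \text{ \ on \ } \partial\Omega,
\]
with $\tilde f \in W^{k,p}(\Omega)$. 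Applying the Divergence Theorem to $\psi$ together with the compatibility hypothesis $\int_\Omega f \diff V = \int_{\partial\Omega} g \diff s$ gives $\int_\Omega \tilde f \diff V = 0$, which is exactly what is needed for solvability of the reduced homogeneous problem.

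Next, I would construct a weak solution $\tilde\phi$. For $p=2$, the bilinear form $a(u,v) = \int_\Omega \nabla u \cdot \nabla v \diff V$ is coercive on the quotient $H^1(\Omega)/\mathbb{R}$ by the Poincar\'e--Wirtinger inequality, so the Lax--Milgram Theorem produces a unique mean-zero $\tilde\phi \in H^1(\Omega)$ satisfying $a(\tilde\phi, v) = -\int_\Omega \tilde f \, v \diff V$ for every $v \in H^1(\Omega)$; the compatibility condition is precisely what makes the right-hand side descend to the quotient. For general $p \in (1,\infty)$, a weak solution is produced by duality against the conjugate exponent $p' = p/(p-1)$ together with the Calder\'on--Zygmund representation via the Neumann Green's function of $\Delta$, as carried out in \cite{Lion}.

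The final and most delicate step is to upgrade this weak solution to the desired regularity via Agmon--Douglis--Nirenberg elliptic theory. Starting from $\tilde\phi \in L^p$, interior Calder\'on--Zygmund estimates together with boundary estimates (tangential difference quotients supplemented by the equation $\Delta\tilde\phi = \tilde f$ to recover the normal derivatives) yield $\tilde\phi \in W^{k+2,p}(\Omega)$ with
\[
\norm{\tilde\phi}_{W^{k+2,p}(\Omega)} \lesssim \norm{\tilde f}_{W^{k,p}(\Omega)} + \norm{\tilde\phi}_{L^p(\Omega)},
\]
and the last term is absorbed after normalising $\tilde\phi$ to have zero mean via Poincar\'e--Wirtinger. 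Setting $\phi = \tilde\phi + \psi$ and combining the two estimates yields the claimed bound on $\norm{\nabla\phi}_{W^{k+1,p}(\Omega)}$. The main obstacle is precisely this boundary-regularity step, which requires flattening $\partial\Omega$ with $C^\infty$ diffeomorphisms and handling the Neumann condition carefully under bootstrapping; these arguments are classical and contained in \cite{Lion}, which justifies the authors' decision simply to cite that reference.
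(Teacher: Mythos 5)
The paper offers no proof of this lemma at all: it is stated as a well-known fact and simply attributed to \cite{Lion}, so there is no in-paper argument to compare yours against. Your sketch is the standard classical route and its architecture is sound: lift $g$ to $\psi\in W^{k+2,p}(\Omega)$ via surjectivity of the normal-derivative trace, check that the reduced source $\tilde f=f-\Delta\psi$ has zero mean (your Divergence Theorem computation is correct, and this is exactly where the compatibility hypothesis enters), solve the homogeneous Neumann problem weakly (Lax--Milgram on $H^1(\Omega)/\mathbb{R}$ for $p=2$, duality for general $p$), and bootstrap with Agmon--Douglis--Nirenberg boundary regularity. The one place where the write-up is looser than it should be is the final absorption step: Poincar\'e--Wirtinger alone turns $\norm{\tilde\phi}_{L^p(\Omega)}$ into $\norm{\nabla\tilde\phi}_{L^p(\Omega)}$, which is still a lower-order piece of the left-hand side and cannot be absorbed into $\norm{\tilde\phi}_{W^{k+2,p}(\Omega)}$ without circularity; one genuinely needs either an a priori $W^{1,p}$ bound for the mean-zero weak solution in terms of $\tilde f$ (the energy estimate when $p=2$, a duality argument otherwise) or the usual compactness--uniqueness contradiction argument exploiting that the only mean-zero solution of the homogeneous Neumann problem is zero. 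That is a standard repair rather than a genuine gap, and with it your outline matches the proof one would find in the cited reference; given that the authors chose to cite rather than prove, your reconstruction is a reasonable and essentially complete account of what is being invoked.
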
\vspace{0.3cm}
Let $F(f,g)$ denote the operator $f \cdot \nabla g,$ where $f,g$ can be vector fields on $\Omega$ or a scalar functions. In order to prove that \eqref{ISM1P}-\eqref{ISM4P} admits a unique local strong solution, we will apply Theorem \ref{abstractevolution} to the following variation of the aforementioned equations, in which the solution is sought in $H^s \times H^s \times H^s$ rather than in $H^s_\star \times H^s \times H^s$:
\begin{subequations}
\begin{align}
   \label{ISMP5}\partial_t u_{S} + F(\mathcal{P} u_{S}, u_{S}) - \mathcal{Q} F (\mathcal{P} u_{S}, \mathcal{P} u_{S}) 
-\mathcal{P} (u_{T}{\hat{x}}) - \mathcal{P} (\theta_{S}\hat{z}) 
&= 0, \\
 \label{ISMP6} \partial_t u_{T} + F(\mathcal{P} u_{S},u_{T})
+ \mathcal{P} u_{S}\cdot\hat{{x}}
+z &= 0,  \\
 \label{ISMP7} \partial_t \theta_{S} + F(\mathcal{P} u_{S},\theta_{S}) + u_{T}  &= 0. 
\end{align}
\end{subequations}
We claim that the condition $\mathcal{P} u_{S} = u_{S}$ follows immediately for all the solutions in $C_w([0,T]; H^s)$ of equation \eqref{ISMP5}-\eqref{ISMP7}. Indeed, first note that
\begin{align}
(\mathcal{P} (u_{T} \hat{x} + \theta_{S} \hat{z}), \mathcal{Q} u_{S})_{L^2} &= 0, \label{cond1:Q}\\
(\mathcal{Q} F (\mathcal{P} u_{S}, \mathcal{P} u_{S}), \mathcal{Q} u_{S})_{L^2} &= (F (\mathcal{P} u_{S}, \mathcal{P} u_{S}), \mathcal{Q} u_{S})_{L^2}. \label{cond2:Q}
\end{align}
Therefore, using \eqref{cond1:Q} and \eqref{cond2:Q} we obtain
\begin{align*}
\frac{1}{2}\frac{d}{d t} \norm{\mathcal{Q} u_{S}}^{2}_{L^{2}} &= (\partial_{t} u_{S}, \mathcal{Q} u_{S})_{L^2} \\
&= - (F(\mathcal{P} u_{S}, u_{S} - \mathcal{P} u_{S}) , \mathcal{Q} u_{S} )_{L^2} = - (F(\mathcal{P} u_{S}, \mathcal{Q} u_{S}), \mathcal{Q} u_{S})_{L^2} = 0,
\end{align*}
since $(F(f,g),g)_{L^2} = 0$ if $f$ is divergence-free. Hence, any weak solution in $C_w([0,T]; H^s)$ of \eqref{ISMP5}-\eqref{ISMP7} satisfies $ \mathcal{Q} u_S = 0.$\\\\
Now we state some estimates (cf. \cite{KatLai},\cite{Kato2}) we will need in the proof of Theorem \ref{mainth}: \\
\begin{proposition} \label{estimadas} 
\begin{enumerate}[(i)] Set $s_{0}=3$ and $s\in\mathbb{N}$.
\item  Let $s \geq s_0$. 
\begin{enumerate} 
\item \label{est:basic:1a} Let $f \in H_{\star}^{s},$ $g \in H^{s+1}.$ We have that
\begin{align}
\norm{F(f,g)}_{H^s} &\lesssim \norm{f}_{H^{s_{0}-1}} \norm{g}_{H^{s+1}} + \norm{f}_{H^{s}} \norm{g}_{H^{s_0}}, \label{est:basic:1a'} \\
\abs{(g, F(f,g))_{H^{s}}}&\lesssim \norm{f}_{H^{s_0}} \norm{g}^{2}_{H^{s}} + \norm{f}_{H^{s}} \norm{g}_{H^{s}} \norm{g}_{H^{s_0}}.  \label{est:basic:1a''}
\end{align}
\item  \label{est:basic:1b} Let $f,g \in H_{\star}^{s}.$ Then $Q F(f,g) \in H^s$ and
$$\norm{QF(f,g)}_{H^{s}} \lesssim \norm{f}_{H^{s_0}} \norm{g}_{H^{s}} +  \norm{f}_{H^{s}} \norm{g}_{H^{s_0}}.$$ 
\item  \label{est:basic:1c} Let $f\in H_{\star}^{s},$ $g\in H_{\star}^{s+1}.$ We have
$$|(g, \mathcal{P} F(f,g))_{H^s}| \lesssim \norm{f}_{H^{s_{0}}} \norm{g}^{2}_{H^{s}}+ \norm{f}_{H^{s}} \norm{g}_{H^{s}} \norm{g}_{H^{s_0}}.$$
\end{enumerate}
\item  Let $1 \leq s \leq s_0-1.$ 
\begin{enumerate}
\item \label{est:basic:2a}For $f \in H_{\star}^{s_0},$ $g \in H^{s+1}$
\begin{align}
\norm{F(f,g)}_{H^{s}} &\lesssim \norm{f}_{H^{s_0-1}} \norm{g}_{H^{s+1}}, \label{est:basic:2a'}\\
\abs{(g, F(f,g))_{H^s}} &\lesssim \norm{f}_{H^{s_0}} \norm{g}^{2}_{H^{s}}. \label{est:basic:2a''}
\end{align}
\item \label{est:basic:2b}If $f\in H_{\star}^{s_0},$ $g\in H_{\star}^{s}$ we have
$$\norm{Q(f,g)}_{H^s} \lesssim \norm{f}_{H^{s_0}} \norm{g}_{H^{s}}.$$
\item \label{est:basic:2c} If $f\in H_{\star}^{s_0},$ $g\in H_{\star}^{s+1}$ then
$$\abs{(f, \mathcal{P} F(f,g))_{H^s}}\lesssim \norm{f}_{H^{s_0}} \norm{g}^{2}_{{H^s}}.$$
\end{enumerate}
\end{enumerate}
\end{proposition}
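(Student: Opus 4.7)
My plan is to deduce every estimate from four basic ingredients: the Moser-type product estimate in Lemma \ref{calculusineq}\ref{calculusineq:1}, the commutator estimate in Lemma \ref{calculusineq}\ref{calculusineq:2}, the Sobolev embeddings $H^{s_0-1}=H^2\hookrightarrow L^\infty$ and $H^{s_0}=H^3\hookrightarrow W^{1,\infty}$ valid in dimension two with $s_0=3$, and integration by parts against a divergence-free tangential vector field. Estimates involving $\mathcal{Q}$ will be reduced to the Neumann elliptic bound in Lemma \ref{Neumann}.

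For \ref{est:basic:1a}, the norm bound \eqref{est:basic:1a'} is obtained by applying Lemma \ref{calculusineq}\ref{calculusineq:1} to $f\cdot\nabla g$ and upgrading the $L^\infty$-norms that appear to the stated Sobolev norms via the embeddings above. For \eqref{est:basic:1a''} I would expand
\[
(g,F(f,g))_{H^s}=\sum_{|\alpha|\leq s}(D^\alpha g,\,f\cdot\nabla D^\alpha g)_{L^2}+\sum_{|\alpha|\leq s}(D^\alpha g,[D^\alpha,f\cdot\nabla]g)_{L^2},
\]
observe that the first sum vanishes by integration by parts (using $\mathrm{div}\,f=0$ and $f\cdot n=0$ on $\partial\Omega$), and control the commutator by Lemma \ref{calculusineq}\ref{calculusineq:2} applied with $g$ replaced by $\nabla g$, followed by Sobolev embedding. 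Item \ref{est:basic:1b} uses $\mathcal{Q}F(f,g)=\nabla p$, where $p$ solves the Neumann problem with interior data $\nabla\cdot F(f,g)=(\partial_i f^j)(\partial_j g^i)$ (thanks to $\mathrm{div}\,f=0$) and boundary data $F(f,g)\cdot n$. The interior contribution is bounded in $H^{s-1}$ by the product estimate, while $f\cdot n = g\cdot n=0$ on $\partial\Omega$ contracts the boundary datum with the shape operator of $\partial\Omega$ into a tame bilinear expression handled via Theorem \ref{traceth}; applying Lemma \ref{Neumann} concludes. Item \ref{est:basic:1c} then follows at once from \eqref{est:basic:1a''}, \ref{est:basic:1b}, the splitting $\mathcal{P}=I-\mathcal{Q}$, and Cauchy--Schwarz.

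The estimates in part (ii), in the regime $1\leq s\leq s_0-1=2$, follow the same template, but the extra assumption $f\in H^{s_0}$ allows the $\|f\|_{H^s}\|g\|_{H^{s_0}}$-contribution present in part (i) to be absorbed, yielding the cleaner right-hand side $\|f\|_{H^{s_0}}\|g\|_{H^s}^2$. The principal technical obstacle will be at the borderline $s=1$: in two dimensions $H^{s+1}=H^2$ does not embed into $W^{1,\infty}$, so the Moser inequality cannot be applied verbatim to replace a factor $\|\nabla g\|_{L^\infty}$ by a norm of $g$. I would bypass this by distributing derivatives differently via Hölder in $L^4\times L^4$ together with the embedding $H^1(\Omega)\hookrightarrow L^4(\Omega)$ available in two dimensions, so that typical problematic terms of the form $\|(\nabla^2 f)(\nabla g)\|_{L^2}$ can be bounded by $\|\nabla^2 f\|_{L^4}\|\nabla g\|_{L^4}\lesssim\|f\|_{H^{s_0}}\|g\|_{H^s}$. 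With this refinement in place, the same cancellation by integration by parts and the same Neumann-problem reasoning as in part (i) deliver \eqref{est:basic:2a'}, \eqref{est:basic:2a''}, \ref{est:basic:2b} and \ref{est:basic:2c}.
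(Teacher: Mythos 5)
The paper does not actually prove Proposition \ref{estimadas}: it is stated with a pointer to \cite{KatLai} and \cite{Kato2}, so there is no in-text argument to compare yours against, and your sketch in effect reconstructs the standard proof from those references. It is essentially sound, and it stays within the toolkit the paper has already set up: the Moser and commutator estimates of Lemma \ref{calculusineq} combined with the two-dimensional embeddings $H^{s_0-1}=H^2\hookrightarrow L^\infty$ and $H^{s_0}=H^3\hookrightarrow W^{1,\infty}$; the cancellation $(D^\alpha g,\,f\cdot\nabla D^\alpha g)_{L^2}=0$ for $f$ divergence-free and tangent to $\partial\Omega$; and the reduction of $\mathcal{Q}F(f,g)=\nabla p$ to the Neumann problem of Lemma \ref{Neumann}, where the crucial one-derivative gain comes from $\operatorname{div}(f\cdot\nabla g)$ being a zeroth-order quadratic expression and from rewriting the boundary datum $(f\cdot\nabla g)\cdot n$ as $-\sum_{i,j}f^ig^j\psi_{ij}$ via the second fundamental form --- the same device the paper uses later for the pressure in Lemma \ref{estimationpressure}. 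Your identification of the borderline case $s=1$ in part (ii), where $H^2\not\hookrightarrow W^{1,\infty}$ in two dimensions, and the fix via $H^1(\Omega)\hookrightarrow L^4(\Omega)$ and H\"older in $L^4\times L^4$, is exactly the right move and is the only genuinely delicate point. One small correction: the identity $\operatorname{div}(f\cdot\nabla g)=\sum_{i,j}(\partial_i f^j)(\partial_j g^i)$ follows from $\operatorname{div}g=0$ (the term it eliminates is $f\cdot\nabla(\operatorname{div}g)$), not from $\operatorname{div}f=0$ as you assert; this is harmless here because in items \ref{est:basic:1b} and \ref{est:basic:2b} both fields lie in $H^s_\star$, but the attribution should be fixed. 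You should also make explicit that the boundary datum is computed by differentiating $g\cdot n=0$ along the tangential field $f$ after extending the normal as $\nabla\phi/|\nabla\phi|$, a step your phrase ``contracts the boundary datum with the shape operator'' leaves implicit.
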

We are ready to start with the proof of Theorem \ref{mainth}.
\begin{proof}[Proof of Theorem \ref{mainth}]
We wish to apply Theorem \ref{abstractevolution} to equations \eqref{ISMP5}-\eqref{ISMP7}. To do this we need to construct an admissible triplet $\{V,H,X\}.$ Define $X= H^0 \times H^0 \times H^0,$ and for $s \geq s_0$ set $H = H^{s, s_{0}}$ to be the Hilbert space equipped with the norm
$$((u_{S},u_{T},\theta_{S}), (u'_{S},u'_{T},\theta'_{S}))_{H} $$
$$ = (u_{S},u'_{S})_{H^{s_0}} + (u_{S}, u'_{S})_{H^s} + (u_{T},u'_{T})_{H^{s_0}} + (u_{T}, u'_{T})_{H^s} + (\theta_{S},\theta'_{S})_{H^{s_0}} + (\theta_{S}, \theta'_{S})_{H^s}. $$
Note that since $s \geq s_0,$ the $H$-norm is equivalent to the $H^s$-norm because of the inequalities
$$\norm{f}^{2}_{H^{s}} \leq \norm{f}^{2}_{H} \leq 2 \norm{f}^{2}_{H^{s}}, \quad f \in H.$$
We are left to construct $V.$ This is selected as the subspace of $H^s$ corresponding to the domain of the self-adjoint unbounded nonnegative operator $S$ taking values in $H^0$ defined by 
$$S = \displaystyle\sum_{|\alpha| \leq s_0} (-1)^{\alpha}D^{2\alpha} + \displaystyle\sum_{|\alpha| \leq s} (-1)^{\alpha}D^{2\alpha},$$
with Neumann boundary conditions. Then one has (cf. \cite{Nire}, \cite{Lions})
$$V \subset H^{2s} \subset H^{s+1}.$$
Note that we have not used $H_{*}^{m}$ or $H_{*}^{0}$ to define the admissible triplet above. This is due to the extra complications that arise from the  divergence and boundary conditions in order to define a suitable space $V$. \\\\ We apply Theorem \ref{abstractevolution} to the operator
$$A(t,(u_{S},u_{T},\theta_{S})) = \left(A_1(t,(u_{S},u_{T},\theta_{S})), A_2(t,(u_{S},u_{T},\theta_{S})), A_3(t,(u_{S},u_{T},\theta_{S}))\right),$$
where
\begin{align*}
A_1 &= F(\mathcal{P} u_{S}, u_{S}) - \mathcal{Q} F (\mathcal{P} u_{S}, \mathcal{P} u_{S}) 
-\mathcal{P} (u_{T}{\hat{x}}) - \mathcal{P} (\theta_{S}\hat{z}), \\
 A_2 &= F(\mathcal{P} u_{S},u_{T}) + \mathcal{P} u_{S}\cdot\hat{{x}} - z, \\
A_3 &= F(\mathcal{P} u_{S},\theta_{S}) + u_{T}.
\end{align*}
Due to Proposition \ref{estimadas} (\ref{est:basic:1a}-\ref{est:basic:1b} and \ref{est:basic:2a}-\ref{est:basic:2b}), the operator $A$ maps $[0,T] \times H = [0,T] \times H^s \times H^s \times H^s$ into $H^{s-1} \times H^{s-1} \times H^{s-1}$ $\subset H^{0}=X$ weakly continuously. Now we bound
\begin{align*}
((u_{S},u_{T},\theta_{S}), A(t,(u_{S},u_{T},\theta_{S})))_H  
 &= (u_{S}, F(\mathcal{P} u_{S}, u_{S}) - \mathcal{Q} F (\mathcal{P} u_{S}, \mathcal{P} u_{S}) 
-\mathcal{P} (u_{T}{\hat{x}}) - \mathcal{P} (\theta_{S}\hat{z}))_{H^{s_0}} \\
&+ (u_{S}, F(\mathcal{P} u_{S}, u_{S}) - \mathcal{Q} F (\mathcal{P} u_{S}, \mathcal{P} u_{S}) -\mathcal{P} (u_{T}{\hat{x}}) - \mathcal{P} (\theta_{S}\hat{z}) )_{H^{s}} \\
&+ (u_{T}, F(\mathcal{P} u_{S} , u_{T}) + \mathcal{P} u_{S} \cdot \hat{{x}} - z)_{H^{s_0}}  \\ 
&+  (u_{T}, F(\mathcal{P} u_{S},u_{T}) +  \mathcal{P} u_{S} \cdot\hat{{x}} - z )_{H^{s}} \\
&+ (\theta_{S}, F(\mathcal{P} u_{S},\theta_{S}) + u_{T})_{H^{s_0}} + (\theta_{S}, F(\mathcal{P} u_{S},\theta_{S}) + u_{T})_{H^{s}}. 
\end{align*}
By using \ref{est:basic:1a} in Proposition \ref{estimadas}, we obtain the estimates
\begin{align*}
\abs{(u_{S}, F(\mathcal{P} u_{S}, u_{S})_{H^{s_0}}} &\lesssim ||u_{S}||^{3}_{H^{s_0}}, \\
\abs{ (u_{T}, F(\mathcal{P} u_{S}, u_{T}))_{H^{s_0}}} &\lesssim \norm{u_{S}}_{H^{s_0}} \norm{u_{T}}^{2}_{H^{s_0}}, \\
\abs{(\theta_{S}, F(\mathcal{P} u_{S}, \theta_{S}))_{H^{s_0}}} &\lesssim \norm{u_{S}}_{H^{s_0}} \norm{\theta_{S}}^{2}_{H^{s_0}}.
\end{align*}
Moreover, by \ref{est:basic:1a}
$$\abs{(u_{S}, \mathcal{Q} F (\mathcal{P} u_{S}, \mathcal{P} u_{S}))_{H^{s_0}}} \lesssim \norm{u_S}_{H^{s_0}} \norm{\mathcal{Q} F (\mathcal{P} u_{S}, \mathcal{P} u_{S})}_{H^{s_0}} \lesssim \norm{u_{S}}^{3}_{H^{s_0}}.$$
In the same way we can derive the bounds:
\begin{align*}
\abs{(u_{S}, F(\mathcal{P} u_{S}, u_{S}))_{H^{s}} } &\lesssim \norm{u_{S}}_{H^{s_0}} \norm{u_{S}}^{2}_{H^{s}}, \\
\abs{(u_{T}, F(\mathcal{P} u_{S}, u_{T}))_{H^{s}} } &\lesssim  \norm{u_{S}}_{H^{s_0}} \norm{u_{T}}^{2}_{H^{s}} + \norm{u_{S}}_{H^{s}} \norm{u_{T}}_{H^{s}}\norm{u_{T}}_{H^{s_{0}}} , \\
\abs{(\theta_{S}, F(\mathcal{P} u_{S}, \theta_{S}))_{H^{s}} } &\lesssim  \norm{u_{S}}_{H^{s_0}} \norm{\theta_{S}}^{2}_{H^{s}} + \norm{u_{S}}_{H^{s}} \norm{\theta_{S}}_{H^{s}} \norm{\theta_{S}}_{H^{s_{0}}}.
\end{align*}
Also, by \ref{est:basic:1a}
$$|(u_S, \mathcal{Q} F (\mathcal{P} u_S, \mathcal{P} u_S))_{H^s}| \lesssim ||u_S||_{H^s} || \mathcal{Q} F (\mathcal{P} u_S, \mathcal{P} u_S) ||_{H^s} \lesssim ||u_S||_{H^s}^2 ||u_S||_{H^{s_0}}.$$
Note that the linear terms $\mathcal{P} (u_{T}{\hat{x}}), \mathcal{P} (\theta_{S}\hat{z}), u_{S} \cdot \hat{x}, z, u_{T},$ can be straightforwardly estimated. Therefore
\begin{align}
((u_{S},u_{T},\theta_{S}), A(t,(u_{S},u_{T},\theta_{S})))_H  &\lesssim \left(\norm{u_{S}}_{H^{s_0}}  + \norm{u_T}_{H^{s_0}}  + \norm{\theta_{S}}_{H^{s_0}}\right)^3 \nonumber \\
 &+ \left(\norm{u_{S}}_{H^{s_0}}  + \norm{u_{T}}_{H^{s_0}}  + \norm{\theta_{S}}_{H^{s_0}} + 1\right)  \left(\norm{u_{S}}_{H^{s}}  + \norm{u_{T}}_{H^{s}}  + \norm{\theta_{S}}_{H^{s}}\right)^2 \nonumber \\
&\lesssim  \norm{(u_{S},u_{T},\theta_{S})}_H^3 +  \norm{(u_{S},u_{T},\theta_{S})}_H^2 + \norm{(u_{S},u_{T},\theta_{S})}_H \nonumber \\
& \label{last:estimate} \lesssim \left( \norm{(u_{S},u_{T},\theta_{S})}_{H} +1 \right)^3.
\end{align}
Hence, Theorem \ref{abstractevolution} can be applied to equations \eqref{ISMP5}-\eqref{ISMP7} with initial data $(u^{0}_{S}, u^{0}_{T}, \theta^{0}_{S})$ in $H^s,$ guaranteeing the existence of a locally weak solution $(u_{S},u_{T},\theta_{S}) \in C_w([0,T]; H^s)$ for any integer $s>2$. \\\\
Estimate \eqref{last:estimate} is not sharp. Obtaining better estimates for the existence time is just a matter of carrying out the estimates more carefully. However, for the sake of exposition clarity it was convenient to write them as we did. The bound condition in \eqref{condicion} is satisfied with
$$\beta(r) = \dfrac{K}{2}(r+1)^{3/2},$$
so the differential equation \eqref{weak} (to be solved in order to obtain a local existence time) becomes
\begin{equation}\label{differential:equation}
\begin{cases}
\rho_t = K (\rho + 1)^{3/2}, \\
\rho(0)= \norm{\phi}^{2}_{H}.
\end{cases}
\end{equation}

The solution of this ODE can be explicitly written as
\begin{equation*}
 \rho(t) = \frac{-2 C K t - C^2 - (2K t)^2 + 4}{(C + K t)^2}, \quad \text{with  \quad } C = \frac{2}{\sqrt{\norm{\phi}^{2}_{H}+1}}.
\end{equation*}
This means the solution of \eqref{differential:equation} is at least guaranteed to exist until 
\[T =  \dfrac{2}{K\sqrt{\norm{\phi}^{2}_{H}+1}}.\]

To complete the proof of Theorem \ref{mainth}, we need to justify the regularity and uniqueness of the solutions. First, we focus on showing uniqueness of the weak solution. Indeed, first note that $(u_{S},u_{T},\theta_{S}) \in \text{Lip} ([0,T]; H^0_{\star})$ since $\partial_t (u_{S},u_{T},\theta_{S}) \in C_w([0,T]; H^0_{\star}),$ so $\norm{(u_{S},u_{T},\theta_{S})}_{L^{2}}$ is differentiable in time. To show uniqueness we argue by contradiction. Suppose that $(u^{1}_{S},u^{1}_{T},\theta^{1}_{S})$ and $(u^{2}_{S},u^{2}_{T},\theta^{2}_{S})$ are two different solutions of \eqref{ISMP5}-\eqref{ISMP7} with the same initial data $(u^{0}_{S},u^{0}_{T},\theta^{0}_{S})$. Define the differences $\tilde{u_{S}} = u^{1}_{S}-u^{2}_{S},$ $\tilde{u_{T}} = u^{1}_{T} - u^{2}_T,$ $\tilde{\theta_{S}} = \theta^{1}_{S} - \theta^{2}_{S}.$ We have
\begin{eqnarray*}
\frac{d}{d t} \frac{1}{2} \norm{\tilde{u_{S}}}^{2}_{L^{2}} &=& -( \mathcal{P} (u^{1}_{S} \cdot \nabla \tilde{u_{S}}) , \tilde{u_{S}} )_{L^{2}} - ( \mathcal{P} (\tilde{u_{S}} \cdot \nabla u^{2}_{S}) , \tilde{u_{S}} )_{L^2} + ( \mathcal{P} (\tilde{u_{T}} \hat{x}) , \tilde{u_{S}} )_{L^{2}} + (\mathcal{P} (\tilde{\theta_{S}} \hat{z}) , \tilde{u_{S}} )_{L^{2}} , \\
\frac{d}{dt} \frac{1}{2} \norm{\tilde{u_{T}}}^{2}_{L^{2}} &=& -(\mathcal{P} u^{1}_{S} \cdot \nabla \tilde{u_{T}}, \tilde{u_{T}} )_{L^2} - ( \mathcal{P}\tilde{u_{S}} \cdot \nabla u^{2}_{T}, \tilde{u_{T}} )_{L^{2}} - (\mathcal{P} \tilde{u_{S}} \cdot \hat{x}, \tilde{u_{T}} )_{L^2}, \\
\frac{d}{dt} \frac{1}{2} \norm{\tilde{\theta_{S}}}^{2}_{L^{2}}  &=& -( \mathcal{P}u^{1}_{S} \cdot \nabla \tilde{\theta_{S}}, \tilde{\theta_{S}} )_{L^{2}} - ( \mathcal{P} \tilde{u_{S}} \cdot \nabla \theta^{2}_{S}, \tilde{\theta_{S}} )_{L^2} - ( \tilde{u_{T}}, \tilde{\theta_{S}} )_{L^{2}}.
\end{eqnarray*} 
By using the properties of $\mathcal{P}$ stated in Lemma \ref{HelmHodge} we observe that
$$( \mathcal{P} (u^{1}_{S} \cdot \nabla \tilde{u_{S}}), \tilde{u_{S}})_{L^{2}} = ( u^{1}_{S} \cdot \nabla \tilde{u_{S}}, \mathcal{P}\tilde{u_{S}})_{L^2} =  (u^{1}_{S} \cdot \nabla \tilde{u_{S}}, \tilde{u_{S}})_{L^2} = 0, $$
and
$$(\mathcal{P} u^{1}_{S} \cdot \nabla \tilde{u_{T}}, \tilde{u_{T}})_{L^2} = 0, \quad ( \mathcal{P} u^{1}_{S} \cdot \nabla \tilde{\theta_{S}}, \tilde{\theta_{S}} )_{L^2} = 0.$$
Hence obtaining
\[ \frac{d}{dt}  \left( \norm{\tilde{u_{S}}}^2_{L^2} + \norm{\tilde{u_{T}}}^2_{L^2}  + \norm{\tilde{\theta_{S}}}^2_{L^2}  \right) \lesssim \left( \norm{\nabla u^{2}_{S}}_{L^\infty} + \norm{\nabla u^{2}_{T}}_{L^\infty} + \norm{\nabla \theta^{2}_{S}}_{L^\infty} + 1\right)\left(  \norm{\tilde{u_{S}}}^2_{L^2} + \norm{\tilde{u_{T}}}^2_{L^2}  + \norm{\tilde{\theta_{S}}}^2_{L^2} \right), \]
from which, after a standard application of Gr\"onwall's argument, uniqueness follows.\\\\
We have shown we can construct a unique weak local solution of \eqref{ISMP5}-\eqref{ISMP7}. Moreover, the solution actually enjoys higher regularity. This is the last point we need to discuss to conclude our argument. We demonstrate this in four steps: \newline \\
\textit{Step 1: } First note that although the solution $(u_{S}(t),u_{T}(t),\theta_{S}(t))$ of \eqref{ISMP5}-\eqref{ISMP7} was sought in $C_w([0,T];H^s\times H^{s}\times H^{s})$ with initial data $\phi,$ we actually have $\norm{u(t)}_{H} 
\rightarrow \norm{\phi}_{H}$ (strongly!). This holds because $\rho(t) \rightarrow ||\phi||_H^2$ for the solution of the differential equation \eqref{weak}.   \\ \\
\textit{Step 2: } By Step 1, $(u_{S},u_{T},\theta_{S})$ is right-continuous at $T=0.$ It is easy to check that it must also be right-continuous on $[0,T]$ by uniqueness (for $t \in [0,T]$ consider the same PDE with initial data $(u_{S},u_{T},\theta_{S})(t, \cdot)$). \\ \\
\textit{Step 3: } We need to prove that $(u_{S},u_{T},\theta_{S})$ is also left-continuous. However, since the ISM system \eqref{ISM1}-\eqref{ISM4} is not time reversible, the customary argument does not apply. We make the following change of variables: 
\[ \tilde{u}_{S}(X,t):=u_{S}(-X,-t), \quad \tilde{u}_{T}(X,t):=-u_{T}(-X,-t), \quad \tilde{\theta}_{S}(X,t):=-\theta_{S}(-X,-t), \quad \text{with} \ X=(x,z). \]
The new functions satisfy the following system of equations
\begin{subequations}
\begin{align} 
 \label{ISMPR1}
  \partial_t \tilde{u}_{S} + \mathcal{P} (\tilde{u}_{S}\cdot\nabla \tilde{u}_{S}) 
-\mathcal{P} (\tilde{u}_{T}{\hat{x}})
&= 
 \mathcal{P} (\tilde{\theta}_{S}\hat{z}),  \\
 \label{ISMPR2}\partial_t \tilde{u}_{T} + \tilde{u}_{S}\cdot\nabla \tilde{u}_{T}
+ \tilde{u}_{S}\cdot\hat{{x}}
&=   -z ,  \\
 \label{ISMPR3}\partial_t \tilde{\theta}_{S} + \tilde{u}_{S}\cdot\nabla\tilde{\theta}_{S} - \tilde{u}_{T}  &= 0, \\
 \label{ISMPR4} \mathcal{P} \tilde{u}_{S} &= \tilde{u}_{S}.
\end{align}
\end{subequations}
The operator $\tilde{A}$ associated with \eqref{ISMPR1}-\eqref{ISMPR4}  satisfies estimates which are similar to the ones satisfied by $A$ \eqref{ISMP5}-\eqref{ISMP7}. Therefore we can apply the same weak existence and uniqueness theorem in $C_w([0,\tilde{T}];H^s\times H^{s}\times H^{s})$ with initial data in $H^s$ for our new variables $(\tilde{u}_{S}(X,t),\tilde{u}_{T}(X,t),\tilde{\theta}_{S}(X,t))$. The existence time $\tilde{T}$ of the new tilde solutions needs not be the same as the one for the standard ones. Also, by Step 2, the new tilde solutions of \eqref{ISMPR1}-\eqref{ISMPR4} are necessarily right-continuous. \\

\textit{Step 4: } Now let $\left(u_{S},u_{T},\theta_{S}\right) \in C_w([0,T]; H^{s}\times H^{s}\times H^{s})$ solve \eqref{ISMP5}-\eqref{ISMP7}. We need to prove that $(u_{S},u_{T},\theta_{S})$ is left-continuous. For this, construct \[ \left(\tilde{u}_{S}(t),\tilde{u}_{T}(t),\tilde{\theta}_{S}(t) \right) = \left(u_{S}(T-t, \cdot),{u}_{T}(T-t,\cdot),\theta_{S}(T-t,\cdot)\right) \in C_w([0,T]; H^s \times H^{s} \times H^{s}),\]  with $\tilde{\phi} = \left( \tilde{u_{S}}(0, \cdot),\tilde{u}_{T}(0, \cdot),\tilde{\theta}_{S}(0,\cdot)\right) \in H^{s}\times H^{s}\times H^{s}.$ Therefore $\left(\tilde{u}_{S},\tilde{u}_{T},\tilde{\theta}_{S}\right)$ has to be the unique weak local solution of \eqref{ISMPR1}-\eqref{ISMPR4} with initial data $\tilde{\phi}.$ By Step 3, $\left(\tilde{u}_{S},\tilde{u}_{T},\tilde{\theta}_{S}\right)$ is right-continuous, and hence $(u_{S},u_{T},\theta_{S})$ is left-continuous.\ \\
We have shown that there exists a local solution of \eqref{ISMP5}-\eqref{ISMP7} in $C([0,T]; H^{s}\times H^{s}\times H^{s})$, concluding the proof.

\end{proof}

For the sake of completeness we also include, without proof, the following result which states the continuous dependence of the solution on the initial data:
\begin{theorem}  \label{smooth}
Let $s>2$ be an integer. Let $(u_{S},u_{T},\theta_{S}) \in C([0,T]; H^s_\star \times H^s \times H^s)$ be the solution of \eqref{ISM1}-\eqref{ISM4} with initial data $(u^0_{S},u^0_{T},\theta^0_{S}) \in H^s_\star \times H^s \times H^s.$ Let $(u^{j}_{S}, u^{j}_{T}, \theta^{j}_{S})$ be the solution of \eqref{ISM1}-\eqref{ISM4} with initial data $(u^{0,j}_{S},u^{0,j}_{T},\theta^{0,j}_{S}) \in H^s_\star \times H^s \times H^s$$j=1,2,\ldots$, such that $(u^{0,j}_{S},u^{0,j}_{T},\theta^{0,j}_{S}) \rightarrow (u^0_{S},u^0_{T},\theta^0_{S})$ in $H^s \times H^s \times H^s.$ Then  
$$(u^j_{S},u^j_{T},\theta^j_{S}) \rightarrow (u_{S},u_{T},\theta_{S}), \quad in \quad C([0,T]; H^s \times H^s \times H^s).$$
\end{theorem}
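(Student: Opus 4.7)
The plan is to prove continuous dependence by combining three ingredients: a uniform existence time from Theorem \ref{mainth}, a low-regularity stability estimate obtained by the same Gr\"onwall argument used for uniqueness, and a Bona--Smith mollification argument to upgrade convergence from $H^{s'}$ ($s'<s$) to the top norm $H^s$.

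First I would observe that since the existence time $T$ provided by Theorem \ref{mainth} depends only on $\norm{(u^0_S,u^0_T,\theta^0_S)}_{H^s}$ (via the scalar ODE \eqref{differential:equation}), the convergence of initial data in $H^s$ implies that for all $j$ sufficiently large the solutions $(u^j_S,u^j_T,\theta^j_S)$ exist on a common interval $[0,T_0]\subseteq[0,T]$ and satisfy a uniform bound $\sup_j \norm{(u^j_S,u^j_T,\theta^j_S)}_{L^\infty(0,T_0;H^s)} \leq M$. Then, mimicking the uniqueness computation in the proof of Theorem \ref{mainth} with $\tilde u_S=u^j_S-u_S$, $\tilde u_T=u^j_T-u_T$, $\tilde\theta_S=\theta^j_S-\theta_S$ (now with nonzero initial data), and using the Sobolev embedding $H^s\hookrightarrow W^{1,\infty}$ valid for $s>2$, a Gr\"onwall estimate yields
\[
\sup_{t\in[0,T_0]} \norm{(u^j_S-u_S,\,u^j_T-u_T,\,\theta^j_S-\theta_S)(t)}_{L^2} \lesssim \norm{(u^{0,j}_S-u^0_S,\,u^{0,j}_T-u^0_T,\,\theta^{0,j}_S-\theta^0_S)}_{L^2}.
\]
Interpolating this against the uniform $H^s$ bound via $\norm{v}_{H^{s'}} \lesssim \norm{v}_{L^2}^{1-s'/s}\norm{v}_{H^s}^{s'/s}$ gives convergence of $(u^j_S,u^j_T,\theta^j_S)$ to $(u_S,u_T,\theta_S)$ in $C([0,T_0];H^{s'})$ for every $s'\in[0,s)$.

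The remaining and decidedly harder step is upgrading to convergence in $H^s$ itself, since interpolation does not reach the endpoint. I would implement the Bona--Smith procedure: mollify the initial data with a divergence-preserving family $J_\varepsilon$ (adapted to the boundary so that $\mathcal{P}$ and the condition $u_S\cdot n=0$ are respected), and apply Theorem \ref{mainth} at regularity level $s+1$ to produce smooth solutions $u^\varepsilon$ and $u^{j,\varepsilon}$ defined on $[0,T_0]$ with uniformly bounded $H^s$ norms. The crux is then to prove the uniform-in-$j$ estimate
\[
\sup_{t\in[0,T_0]} \norm{u^\varepsilon(t)-u(t)}_{H^s} + \sup_{j,\,t\in[0,T_0]} \norm{u^{j,\varepsilon}(t)-u^j(t)}_{H^s} \longrightarrow 0 \quad \text{as } \varepsilon\to 0.
\]
Once available, the triangle inequality $\norm{u^j-u}_{H^s} \leq \norm{u^\varepsilon-u}_{H^s} + \norm{u^{j,\varepsilon}-u^\varepsilon}_{H^s} + \norm{u^{j,\varepsilon}-u^j}_{H^s}$ closes the argument: the outer terms are made small by choosing $\varepsilon$ small uniformly in $j$, and the middle term (a difference of $H^{s+1}$ solutions with close initial data) converges to zero in $H^s$ by the interpolation step applied at regularity level $s+1$.

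The main obstacle is the Bona--Smith convergence estimate for $u^\varepsilon-u$ in $H^s$. Differentiating the equations to order $s$, pairing in $L^2$, and summing produces transport commutators that must be controlled by the Kato--Ponce-type inequality Lemma \ref{calculusineq}(\ref{calculusineq:2}), together with integration by parts exploiting $\nabla\cdot u_S=0$ and $u_S\cdot n=0$. The inhomogeneity in the resulting differential inequality involves the mollification defect $\norm{J_\varepsilon\phi-\phi}_{H^s}$, which is $o(1)$ as $\varepsilon\to 0$ but admits no quantitative rate in $H^s$ for a generic $\phi\in H^s$; it is precisely this non-quantitative character that forces the extra regularisation at level $s+1$ in the Bona--Smith scheme. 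Executing this on the bounded domain $\Omega$, where mollification must be reconciled with the boundary condition and with the Leray projection $\mathcal{P}$, is the most delicate point and is where I expect the bulk of the technical work to lie.
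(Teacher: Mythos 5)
The paper itself contains no proof of this theorem: the authors explicitly defer to the continuous-dependence argument of Kato and Lai \cite{KatLai}, so there is no in-paper argument to match yours against line by line. Your opening steps are correct and would begin either route: the common existence time read off from the scalar ODE \eqref{differential:equation}, the $L^2$ Gr\"onwall estimate recycled from the uniqueness computation (using $H^s\hookrightarrow W^{1,\infty}$ for $s>2$), and interpolation against the uniform $H^s$ bound to get convergence in $C([0,T_0];H^{s'})$ for every $s'<s$. Where you genuinely diverge is at the endpoint. You propose a Bona--Smith regularisation, whereas the Kato--Lai route avoids mollification altogether: the low-norm convergence plus the uniform bound gives $u^j(t)\rightharpoonup u(t)$ weakly in $H^s$ for each $t$; the majorants $\rho_j(t)\to\rho(t)$ solving \eqref{weak} with $\rho_j(0)=\norm{\phi_j}^2_H\to\norm{\phi}^2_H$, combined with the time-reversal device already built in Steps 3--4 of the proof of Theorem \ref{mainth} (which forces the norm to attain its majorant in the limit rather than drop below it), give $\norm{u^j(t)}_H\to\norm{u(t)}_H$; and in a Hilbert space weak convergence together with convergence of norms is strong convergence, with uniformity in $t$ following from equicontinuity. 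That route buys exactly the step your plan leaves open.

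The open step in your proposal is not cosmetic. On the bounded domain $\Omega$ with the constraints $\nabla\cdot u_S=0$ and $u_S\cdot n=0$ on $\partial\Omega$, you need a family $J_\varepsilon$ mapping $H^s_\star$ into $H^{s+1}_\star$, commuting acceptably with the Leray projector $\mathcal{P}$, and satisfying the quantitative rates $\norm{J_\varepsilon\phi-\phi}_{H^{s-1}}\lesssim\varepsilon\norm{\phi}_{H^s}$ and $\norm{J_\varepsilon\phi}_{H^{s+1}}\lesssim\varepsilon^{-1}\norm{\phi}_{H^s}$ on which the whole Bona--Smith bookkeeping rests; without these rates the $\varepsilon$-uniform-in-$j$ convergence of $u^{j,\varepsilon}-u^j$ in $H^s$ does not close. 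A standard convolution mollifier does none of this near $\partial\Omega$, so you would need either an extension operator compatible with $H^s_\star$ or a regularisation at the level of the stream function, and you give neither. As written, the argument is therefore a plausible programme with its hardest step unexecuted; the weak-convergence-plus-norms argument of \cite{KatLai} is both shorter and runs entirely on machinery the paper has already set up.
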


\begin{remark}
We do not provide the proof of Theorem \ref{smooth} since it can be performed by imitating the arguments in \cite{KatLai}.
\end{remark}

\section{Blow-up criterion for the ISM}   \label{4}
In this section we prove the following blow-up criterion for the Incompressible Slice Model.
\begin{theorem}\label{blowupcriteria}
Assume that $(u^{0}_{S},u^{0}_{T},\theta^{0}_{S})\in H^{s}_\star (\Omega)\times H^{s} (\Omega)\times H^{s} (\Omega)$ for $s>2$ integer, and that the solution $(u_{S},u_{T},\theta_{S})$ of \eqref{ISM1}-\eqref{ISM4} with boundary condition \eqref{ISM5} is of class $C([0,T];  H_{\star}^{s}\times H^{s}\times H^{s})$. Then for $T^*< \infty,$ the following two statements are equivalent:
\begin{eqnarray}
&(i)& \quad E(t) < \infty, \quad \forall t < T^*, \quad \mbox{ and } \quad \limsup_{t \to T^*} E(t) = \infty,
\label{eq:X(t)toinfty} \\
&(ii)& \quad \int_0^{t} \norm{\nabla u_{S}(s)}_{L^{\infty}} \, \diff s < \infty, \quad \forall t < T^*, \quad \mbox{ and } \quad \int_0^{T^*}  \norm{\nabla u_{S}(s)}_{L^{\infty}} \, \diff s = \infty,
\label{eq:utoinfty}
\end{eqnarray}
where $E(t)=\norm{u_{S}}^2_{{H}^s}+\norm{u_{T}}^2_{{H}^s}+\norm{\theta_{S}}^2_{{H}^s}$. If such $T^*$ exists then it is called the first-time blow-up and \eqref{eq:utoinfty} is a blow-up criterion. 
\end{theorem}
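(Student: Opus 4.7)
The plan is to reduce the biconditional to a BKM-type a priori bound combined with a Sobolev embedding. The direction (ii)$\Rightarrow$(i) is immediate by contrapositive: since $s>2$, the Sobolev embedding $H^{s-1}(\Omega)\hookrightarrow L^{\infty}(\Omega)$ gives $\|\nabla u_S(t)\|_{L^\infty}\lesssim \|u_S(t)\|_{H^s}\leq \sqrt{E(t)}$, so if $E$ stayed bounded on $[0,T^*]$ the integral in (ii) would be finite, contradicting its assumed divergence. For the substantive direction (i)$\Rightarrow$(ii) I argue by contradiction: assume $\int_0^{T^*}\|\nabla u_S\|_{L^\infty}\,ds<\infty$ and prove $E$ stays bounded on $[0,T^*)$, contradicting $\limsup_{t\to T^*}E(t)=\infty$.

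The core is the $H^s$ energy identity obtained by applying $D^\alpha$ for $|\alpha|\leq s$ to each of \eqref{ISM1}--\eqref{ISM3} and pairing in $L^2$ with $D^\alpha u_S$, $D^\alpha u_T$, $D^\alpha\theta_S$. The symmetric transport piece $(u_S\cdot\nabla D^\alpha f,D^\alpha f)_{L^2}$ vanishes thanks to $\nabla\cdot u_S=0$ together with the tangency condition \eqref{ISM5}. The commutator $([D^\alpha,u_S\cdot\nabla]f,D^\alpha f)_{L^2}$ is bounded via Lemma \ref{calculusineq}(ii) by $(\|\nabla u_S\|_{L^\infty}\|f\|_{H^s}+\|u_S\|_{H^s}\|\nabla f\|_{L^\infty})\|f\|_{H^s}$; for $f=u_S$ this collapses to $\|\nabla u_S\|_{L^\infty}\|u_S\|_{H^s}^2$, while for $f\in\{u_T,\theta_S\}$ auxiliary control of $\|\nabla f\|_{L^\infty}$ is required. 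I would obtain this by differentiating \eqref{ISM2}--\eqref{ISM3} once in space and reading along characteristics the ODE inequalities
\begin{align*}
\tfrac{d}{dt}\|\nabla u_T(t)\|_{L^\infty}&\leq \|\nabla u_S\|_{L^\infty}\|\nabla u_T\|_{L^\infty}+\|\nabla u_S\|_{L^\infty}+1,\\
\tfrac{d}{dt}\|\nabla\theta_S(t)\|_{L^\infty}&\leq \|\nabla u_S\|_{L^\infty}\|\nabla\theta_S\|_{L^\infty}+\|\nabla u_T\|_{L^\infty},
\end{align*}
whose successive Gr\"onwall integration, under the standing hypothesis on $\int\|\nabla u_S\|_{L^\infty}\,ds$, yields a uniform constant $M_\star$ bounding $\|\nabla u_T(t)\|_{L^\infty}+\|\nabla\theta_S(t)\|_{L^\infty}$ on $[0,T^*]$. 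The pressure contribution $(D^\alpha\nabla p,D^\alpha u_S)_{L^2}$ is controlled by $\|\nabla p\|_{H^s}\|u_S\|_{H^s}$, where $\nabla p$ is reconstructed from the Neumann problem $\Delta p=\partial_x u_T+\partial_z\theta_S-\mathrm{tr}((\nabla u_S)^2)$ with boundary data $\partial_n p=(-u_S\cdot\nabla u_S+u_T\hat{x}+\theta_S\hat{z})\cdot n$, derived by dotting \eqref{ISM1} with $n$ and using $\partial_t(u_S\cdot n)=0$; Lemma \ref{Neumann}, the trace theorem \ref{traceth}, and the Moser estimate Lemma \ref{calculusineq}(i) then produce $\|\nabla p\|_{H^s}\lesssim (1+\|\nabla u_S\|_{L^\infty})\|u_S\|_{H^s}+\|u_T\|_{H^s}+\|\theta_S\|_{H^s}$.

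Assembling the bounds yields $\frac{d}{dt}E(t)\leq C(1+M_\star+\|\nabla u_S(t)\|_{L^\infty})(E(t)+1)$, and a final Gr\"onwall application gives $E(t)\leq C_0\exp\!\bigl(C\int_0^{T^*}(1+\|\nabla u_S\|_{L^\infty})\,ds\bigr)<\infty$ on $[0,T^*)$, the sought contradiction with (i). The main obstacle is the coupling between the three equations: the $u_T$- and $\theta_S$-commutators naturally generate $\|\nabla u_T\|_{L^\infty}$ and $\|\nabla\theta_S\|_{L^\infty}$ factors which are not a priori controlled by $\|\nabla u_S\|_{L^\infty}$ alone, and the resolution is precisely the pair of transport ODEs above, which exploit that $u_T$ and $\theta_S$ are advected by $u_S$ in order to propagate gradient-$L^\infty$ control from $u_S$ to itself. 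A closely related bookkeeping subtlety is that $(D^\alpha\nabla p,D^\alpha u_S)_{L^2}$ must be estimated by Cauchy--Schwarz rather than by integration by parts, since $D^\alpha u_S$ need not be tangent to $\partial\Omega$ and a direct IBP would generate uncontrolled boundary contributions.
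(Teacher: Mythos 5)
Your proposal is correct and follows essentially the same route as the paper's proof: an $H^s$ energy estimate in which the symmetric transport term is killed by incompressibility and tangency, commutators handled by Lemma \ref{calculusineq}(ii), the pressure recovered from the Neumann problem via Lemma \ref{Neumann} and the trace theorem, auxiliary $L^\infty$ control of $\nabla u_T$ and $\nabla\theta_S$ in terms of $\int_0^t\norm{\nabla u_S}_{L^\infty}\,\diff\tau$, and a closing Gr\"onwall argument combined with the Sobolev embedding $\norm{\nabla u_S}_{L^\infty}\lesssim E(t)$. The one genuine methodological difference is in the auxiliary step: you propagate $\norm{\nabla u_T}_{L^\infty}$ and $\norm{\nabla\theta_S}_{L^\infty}$ by reading the once-differentiated transport equations along characteristics (which remain in $\Omega$ thanks to $u_S\cdot n=0$), whereas the paper performs $L^p$ estimates on $\nabla u_T$ and $\nabla\theta_S$ with constants uniform in $p$ and then lets $p\to\infty$ to reach \eqref{BKM2}; both yield the same exponential bound, yours trading the limit passage for a Dini-derivative argument on a sup-norm. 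One caveat: in the pressure step you cannot feed the boundary datum $-(u_S\cdot\nabla u_S)\cdot n$ directly into Lemma \ref{Neumann}, since estimating it in $H^{s-\frac12}(\partial\Omega)$ by the trace theorem would cost $s+1$ derivatives of $u_S$ and the bound would not close; as in the paper (following Temam), one must first exploit the tangency $u_S\cdot n=0$ to rewrite $(u_S\cdot\nabla u_S)\cdot n=-\sum_{i,j}u_{S,i}u_{S,j}\psi_{ij}$ on $\partial\Omega$, with $\psi_{ij}$ built from a level-set representation of the smooth boundary, so that no derivatives of $u_S$ appear in the boundary term. With that step supplied, your claimed bound is exactly Lemma \ref{estimationpressure} and the argument closes as in the paper.
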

\begin{remark}
Theorem \ref{blowupcriteria} can be a very useful tool for studying the problem of blow-up versus global existence in the Incompressible Slice Model. Note that, for instance, it can be applied to check whether a numerical simulation shows blow-up in finite time. 
\end{remark}
\begin{remark}
Notice that we cannot expect (as one has in 3D Euler), a Beale-Kato-Majda criterion, stating that if
\[\int_{0}^{T^*} \norm{\omega}_{\infty}< \infty,\]
then the corresponding solution stays regular on $[0,T^*]$. Here $\omega=\text{curl} \ u$. The main problem is that we cannot control properly $\theta_{S}$ and $u_{T}$ in terms of the vorticity only. If the equations were not coupled, this could be easily done by using a logarithmic inequality as in 3D Euler. However, in the ISM case it seems that controlling $\norm{\omega}_{\infty}$ is not enough for global regularity. 
\end{remark}
\begin{proof}[Proof of Theorem \ref{blowupcriteria}]
The proof is divided into several steps. \\ \\
\textit{Step 1: Estimates for $\norm{u_{S}}_{H^{s}}$, $\norm{u_{T}}_{H^{s}},$ and $\norm{\theta_S}_{H^{s}}$.} Setting $|\alpha|\leq s$, we apply $D^{\alpha}$ on both sides of \eqref{ISM1}, multiply by $D^{\alpha}u_{S}$ and integrate over $\Omega$ to obtain
\begin{align} 
\label{estimationu1} (D^{\alpha}\partial_{t}u_{S},D^{\alpha}u_{S})_{L^2}+(D^{\alpha}(u_{S}\cdot\nabla u_{S}),D^{\alpha} u_{S})_{L^2} &= (D^{\alpha}(u_{T}\hat{x}),D^{\alpha}u_{S})_{L^2}+(D^{\alpha}(\theta_{S}\hat{z}),D^{\alpha}u_{S})_{L^2}  \\
\label{estimationu2} &-(D^{\alpha}\nabla p, D^{\alpha}u_{S})_{L^2}. 
\end{align}
The second term on the left-hand side of \eqref{estimationu1} can be rewritten as
\begin{equation} \label{commutator}
 (D^{\alpha}(u_{S}\cdot \nabla u_{S}),D^{\alpha}u_{S})_{L^2}= (D^{\alpha}(u_{S}\cdot\nabla u_{S})-u_{S}\cdot D^{\alpha}\nabla u_{S}, D^{\alpha}u_{S})_{L^2}+(u_{S}\cdot D^{\alpha}\nabla u_{S},D^{\alpha}u_{S})_{L^2} .
\end{equation}
Therefore, applying the Cauchy-Schwarz inequality in \eqref{estimationu1}-\eqref{estimationu2}, we have that
\begin{align*}
 \frac{d}{dt} \norm{D^{\alpha} u_{S}}^{2}_{L^{2}} &\lesssim \norm{D^{\alpha}u_{S}}_{L^{2}}\norm{D^{\alpha}(u_{S}\cdot\nabla u_{S})-u_{S}\cdot D^{\alpha}\nabla u_{S}}_{L^{2}} \\
 &+ \norm{D^{\alpha}u_{S}}_{L^{2}}\left(\norm{D^{\alpha}(u_{T}\hat{x})}_{L^{2}}+\norm{D^{\alpha}(\theta_{S}\hat{z})}_{L^{2}}+\norm{D^{\alpha}\nabla p}_{L^{2}}\right), 
 \end{align*}
where we have used that the last term in \eqref{commutator} vanishes after integration by parts due to $\text{div} \ u_{S}=0$ and $u_{S}\cdot n=0$ on $\partial\Omega$.  Summing over $|\alpha| \leq s$, applying Young's inequality, and using \ref{calculusineq:2} in Lemma \ref{calculusineq} (with $f=u_{S}$ and $g=\nabla u_{S}$) yields
\begin{equation}\label{finalu}
\frac{d}{d t} \norm{u_{S}}^{2}_{H^{s}} \lesssim \norm{u_{S}}^{2}_{H^{s}}\left(\norm{u_{S}}_{W^{1,\infty}}+ 1 \right) + \norm{u_{T}}^{2}_{H^{s}}+\norm{\theta_{S}}^{2}_{H^{s}}+\norm{u_{S}}_{H^{s}}\norm{\nabla p}_{H^{s}}.
\end{equation}
To estimate $\norm{u_{T}}_{H^{s}}$ and $\norm{\theta_{S}}_{H^{s}}$ we proceed in a similar fashion. By applying these same techniques to \eqref{ISM2}, using  Lemma \ref{calculusineq}, and Cauchy-Schwarz inequality, we derive
\[ \frac{d}{d t} \norm{D^{\alpha}u_{T}}^{2}_{L^{2}} \lesssim \norm{D^{\alpha}u_T}_{L^{2}}\left(
\norm{u_{S}}_{W^{1,\infty}}\norm{u_{T}}_{H^{s}}+\norm{u_{S}}_{H^{s}}\norm{\nabla u_{T}}_{L^{\infty}} + \norm{D^{\alpha} u_{S}}_{L^{2}}+\norm{D^{\alpha}z}_{L^{2}} \right).
\]
Summing over $|\alpha|\leq s$ and using Young's inequality gives
\begin{equation}\label{finalv}
\frac{d}{d t} \norm{u_{T}}^{2}_{H^{s}} \lesssim \norm{u_{T}}^{2}_{H^{s}}\norm{u_{S}}_{W^{1,\infty}}+\norm{\nabla u_{T}}_{L^{\infty}}\norm{u_{S}}_{H^s}\norm{u_{T}}_{H^{s}}+\norm{u_{S}}^{2}_{H^{s}}+\norm{u_{T}}^{2}_{H^{s}}+1.
\end{equation}
A similar argument can be carried out to estimate $\norm{\theta_{S}}_{H^{s}}$, obtaining
\begin{equation}\label{finaltheta}
\frac{d}{d t} \norm{\theta_{S}}^{2}_{H^{s}} \lesssim \norm{\theta_{S}}^{2}_{H^{s}}\norm{u_{S}}_{W^{1,\infty}}+\norm{\nabla \theta_{S}}_{L^{\infty}}\norm{u_{S}}_{H^{s}}\norm{\theta_{S}}_{H^s}+\norm{\theta_{S}}^{2}_{H^{s}}+\norm{u_{T}}^{2}_{H^{s}}.
\end{equation}
Finally, by \eqref{finalu},\eqref{finalv}, and \eqref{finaltheta}, we conclude
\begin{equation}\label{BKM1}
\frac{d}{d t} E(t) \lesssim \left(\norm{u_{S}}_{W^{1,\infty}}+\norm{\nabla u_{T}}_{L^{\infty}}+\norm{\nabla \theta_{S}}_{L^{\infty}}+1\right)\left(E(t)+1\right) +\norm{\nabla p}_{H^{s}}\norm{u_{S}}_{H^{s}},
 \end{equation}
where $E(t)=\norm{u_{S}}^2_{{H}^s}+\norm{u_{T}}^2_{{H}^s}+\norm{\theta_{S}}^2_{{H}^s}$. \\\\
\textit{Step 2: Estimate the pressure term $\norm{\nabla p}_{H^{s}}$.} 
In order to do this, we need the following lemma.
\begin{lemma}\label{estimationpressure} If $u_{S},u_{T},\theta_{S}$, and $p$ satisfy equations \eqref{ISM1}-\eqref{ISM4} with boundary condition \eqref{ISM5}, then for $s\geq 3$ we have the following estimate
\[ \norm{\nabla p}_{H^{s}}  \lesssim \norm{u_{S}}_{H^{s}}\norm{u_{S}}_{W^{1,\infty}}+\norm{u_{T}}_{H^{s}
}+\norm{\theta_{S}}_{H^{s}}.\]

\end{lemma}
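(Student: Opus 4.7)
The plan is to recover $p$ from a Neumann boundary value problem for the Laplacian, and then to apply Lemma \ref{Neumann} together with the product and trace inequalities. First I would take the divergence of the momentum equation \eqref{ISM1}. Since $\nabla\cdot u_S = 0$ by \eqref{ISM4}, the time derivative drops out and one obtains
\begin{equation*}
-\Delta p = \nabla\cdot(u_S\cdot\nabla u_S) - \partial_x u_T - \partial_z\theta_S.
\end{equation*}
Using incompressibility once more, the nonlinear term simplifies to $\nabla\cdot(u_S\cdot\nabla u_S) = \partial_i u_S^j\,\partial_j u_S^i$, which is purely of order one in $u_S$. Thus $\Delta p = f$ with
\begin{equation*}
f := -\partial_i u_S^j\,\partial_j u_S^i + \partial_x u_T + \partial_z\theta_S.
\end{equation*}

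Next I would derive the boundary condition by dotting \eqref{ISM1} with the outward unit normal $n$ on $\partial\Omega$. Since the boundary is fixed, $\partial_t(u_S\cdot n) = (\partial_t u_S)\cdot n$, and this vanishes on $\partial\Omega$ by \eqref{ISM5}. Rearranging yields $\partial_n p = g$, where
\begin{equation*}
g := -(u_S\cdot\nabla u_S)\cdot n + u_T n_x + \theta_S n_z.
\end{equation*}
The compatibility condition $\int_\Omega f\,dV = \int_{\partial\Omega} g\,dS$ needed to invoke Lemma \ref{Neumann} is then a direct consequence of the divergence theorem applied to the three contributions in $f$, bearing in mind that $-\nabla\cdot(u_S\cdot\nabla u_S)$ on $\Omega$ integrates to $-\int_{\partial\Omega}(u_S\cdot\nabla u_S)\cdot n\,dS$, and analogously for the $u_T$ and $\theta_S$ pieces.

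With the Poisson-Neumann setup in place, Lemma \ref{Neumann} (with $k=s-1$ and $p=2$) gives
\begin{equation*}
\|\nabla p\|_{H^s(\Omega)} \lesssim \|f\|_{H^{s-1}(\Omega)} + \|g\|_{H^{s-1/2}(\partial\Omega)}.
\end{equation*}
To control $\|f\|_{H^{s-1}}$, I would use the Moser-type product estimate in Lemma \ref{calculusineq}(\ref{calculusineq:1}) applied to $\partial_i u_S^j \partial_j u_S^i$ (which has $H^{s-1}$-norm bounded by $\|u_S\|_{W^{1,\infty}}\|u_S\|_{H^s}$), while the linear contributions $\partial_x u_T$ and $\partial_z\theta_S$ give $\|u_T\|_{H^s} + \|\theta_S\|_{H^s}$. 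For the boundary piece, I would construct a smooth extension $\tilde n$ of $n$ into $\Omega$ and define a corresponding interior extension $\tilde g$ of $g$; by the Trace Theorem \ref{traceth},
\begin{equation*}
\|g\|_{H^{s-1/2}(\partial\Omega)} \lesssim \|\tilde g\|_{H^s(\Omega)} \lesssim \|u_S\cdot\nabla u_S\|_{H^s} + \|u_T\|_{H^s} + \|\theta_S\|_{H^s},
\end{equation*}
and a second application of Lemma \ref{calculusineq}(\ref{calculusineq:1}) bounds the quadratic term by $\|u_S\|_{W^{1,\infty}}\|u_S\|_{H^s}$. Combining the two estimates yields the stated inequality.

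The main obstacle I anticipate is essentially bookkeeping rather than a deep difficulty: one must pay close attention to the boundary term, since the trace of a product is one derivative more delicate than its interior counterpart, which is why the hypothesis $s\geq 3$ (equivalently $s-1>2/2$, so that $H^{s-1}(\Omega)$ is a Banach algebra in dimension two and the trace into $H^{s-1/2}(\partial\Omega)$ is well defined) is required. The other subtlety is confirming that no time derivative of $u_S$ leaks into the boundary condition; this works precisely because $u_S\cdot n\equiv 0$ along $\partial\Omega$.
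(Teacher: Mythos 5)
Your overall strategy (derive a Poisson--Neumann problem for $p$, invoke Lemma \ref{Neumann}, and estimate the data with the Moser product inequality and the Trace Theorem) is the same as the paper's, and your treatment of the interior term $\partial_i u_S^j\,\partial_j u_S^i$ in $H^{s-1}$ is fine. The gap is in the boundary term. You keep the datum $g = -(u_S\cdot\nabla u_S)\cdot n + u_T n_x + \theta_S n_z$ as written and claim that Lemma \ref{calculusineq}(\ref{calculusineq:1}) bounds $\norm{u_S\cdot\nabla u_S}_{H^s(\Omega)}$ by $\norm{u_S}_{W^{1,\infty}}\norm{u_S}_{H^s}$. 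It does not: applied with $f=u_S$, $g=\nabla u_S$, that lemma produces the term $\norm{u_S}_{L^\infty}\norm{D^s\nabla u_S}_{L^2}\sim \norm{u_S}_{L^\infty}\norm{u_S}_{H^{s+1}}$, which is one full derivative more than the lemma's conclusion allows. Equivalently, the trace of $(u_S\cdot\nabla u_S)\cdot n$ naively lands only in $H^{s-3/2}(\partial\Omega)$, not the $H^{s-1/2}(\partial\Omega)$ needed by Lemma \ref{Neumann}. This derivative loss on the boundary is the real difficulty of the lemma, and your proposal does not resolve it.

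The paper's resolution is Temam's classical trick: because $u_S\cdot n\equiv 0$ on $\partial\Omega$, the field $u_S$ is tangent to the boundary, so $u_S\cdot\nabla$ acts tangentially there and $u_S\cdot\nabla(u_S\cdot n)=0$ on $\partial\Omega$. Writing $\partial\Omega$ locally as $\{\phi=0\}$ with $n=\nabla\phi/|\nabla\phi|$, this identity converts the boundary term into
\[
\sum_{i,j} u_{S,i}(D_i u_{S,j})\,n_j \;=\; -\sum_{i,j} u_{S,i}\,u_{S,j}\,\psi_{ij}, \qquad \psi_{ij}=\frac{D_{ij}\phi}{|\nabla\phi|},
\]
which is \emph{zeroth order} in $u_S$. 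Its $H^{s-1/2}(\partial\Omega)$ norm is then controlled, via the Trace Theorem and the product estimate, by $\norm{u_S}_{H^s}\norm{u_S}_{L^\infty}$, closing the argument at the level of $\norm{u_S}_{H^s}$. Without this step (or an equivalent device exploiting the tangency of $u_S$), the stated estimate cannot be obtained by your route.
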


\begin{proof}[Proof of Lemma \ref{estimationpressure}]
The proof follows closely the lines of \cite{Temam} for the incompressible Euler equation. There, $\norm{\nabla p}_{H^{s}}$ is bounded in terms of $\norm{u}^{2}_{H^{s}}$. We perform some simple modifications of this idea in order to obtain convenient estimates. First take the divergence in \eqref{ISM1} and dot it against the outward normal $n=(n_{1},n_{2})$. Using the divergence-free condition and  $u_S \cdot n=0$ on $\partial \Omega$, we obtain the following  Neumann problem for $p$
\begin{align}
 \label{5.9}  \Delta p &= \text{div} \ (u_{T}\hat{x}+\theta_{S}\hat{z})- \displaystyle\sum_{i,j=1}^{2}D_{j}u_{S,i}\cdot D_{i}u_{S,j}, \ \text{in \ } \Omega, \\
 \label{5.10}\frac{\partial p}{\partial n}& = (u_{T}\hat{x}+\theta_{S}\hat{z})\cdot n-\displaystyle\sum_{i,j=1}^{2}u_{S,i}(D_{i}u_{S,j})n_{j}, \  \ \text{on \ } \partial\Omega. 
\end{align}
Proceeding as in \cite{Temam}, we can eliminate the derivatives from $u_{S}$ on the right hand side of \eqref{5.10} by representing $\partial\Omega$ locally as a level set of a smooth function, i.e. as $\phi(x)=0$, so that on every local patch we can write
\[ \frac{\partial p}{\partial n}=(u_{T}\hat{x}+\theta_{S}\hat{z})\cdot n-\displaystyle\sum_{i,j=1}^{2}u_{S,i}u_{S,j}\psi_{ij}, \quad \text{on \ } \partial\Omega, \]
where 
$$\psi_{ij}=\frac{D_{ij}\phi(x)}{|\nabla \phi(x)|}.$$ 
Notice that this sort of representation is only possible because the boundary $\partial\Omega$ is smooth enough. Hence we can estimate the pressure term $\norm{\nabla p}_{H^s}$ by applying \ref{calculusineq:1} in Lemma \ref{calculusineq}, combined with the Trace Theorem \ref{traceth}. Indeed, by \eqref{5.9}, \eqref{5.10}, and Lemma \ref{Neumann}:
\begin{align*}
 \norm{\nabla p}_{H^{s}} &\lesssim   \left( \left | \left| \text{div} (u_{T}\hat{x}+\theta_{S}\hat{z})- \displaystyle\sum_{i,j=1}^{2}D_{j}u_{S,i}\cdot D_{i}u_{S,j} \right| \right|_{H^{s-1}(\Omega)}+ \left| \left|(u_{T}\hat{x}+\theta_{S}\hat{z})\cdot n - \displaystyle\sum_{i,j=1}^{2}u_{S,i}u_{S,j}\psi_{ij} \right| \right|_{H^{s-\frac{1}{2}}(\partial \Omega)} \right) \\
 &\lesssim \norm{u_{T}\hat{x}+\theta_{S}\hat{z}}_{H^{s}(\Omega)}+ \left| \left|\displaystyle\sum_{i,j=1}^{2}D_{j}u_{S,i}\cdot D_{i}u_{S,j} \right| \right|_{H^{s-1}(\Omega)}+ \left| \left| \displaystyle\sum_{i,j=1}^{2}u_{S,i}u_{S,j}\psi_{ij} \right| \right|_{H^{s-\frac{1}{2}}(\partial \Omega)} \\
  &\lesssim  \norm{u_{T}\hat{x}+\theta_{S}\hat{z}}_{H^{s}(\Omega)}+\displaystyle\sum_{i,j=1}^{2}\left(\norm{D_{j}u_{S,i}}_{H^{s-1}(\Omega)}\norm{D_{i}u_{S,j}}_{L^{\infty}(\Omega)}+\norm{D_{j}u_{S,i}}_{L^\infty(\Omega)}\norm{D_{i}u_{S,j}}_{H^{s-1}(\Omega)}\right) \\ \ \ \ \ 
  &+ \displaystyle\sum_{i,j=1}^{2}\norm{u_{S,i}}_{H^{s}(\Omega)}\norm{u_{S,j}}_{L^\infty(\Omega)}+\norm{u_{S,j}}_{H^{s}(\Omega)}\norm{u_{S,i}}_{L^{\infty}(\Omega)}\\
 &\lesssim \norm{u_{S}}_{H^{s}(\Omega)}\norm{u_{S}}_{W^{1,\infty}(\Omega)}+\norm{u_{T}}_{H^{s}(\Omega)}+\norm{\theta_{S}}_{H^{s}(\Omega)}.
\end{align*}

\end{proof}
\textit{Step 3: Controlling $\norm{\nabla u_{T}}_{L^{\infty}}$ and  $\norm{\nabla \theta_{S}}_{L^{\infty}}$ by $\norm{\nabla u_{S}}_{L^{\infty}}.$ }
Take $\nabla$ in \eqref{ISM2} to obtain
\[ \partial_{t}\nabla u_{T}+\nabla(u_{S} \cdot \nabla u_{T})+\nabla(u_{S}\cdot \hat{x})=-\nabla z.\]
Let $p > 2$ be an integer and compute the $L^{2}$ inner product against $\nabla u_{T}\abs{\nabla u_{T}}^{p-2}$ in the last equation, deriving
\begin{equation*}
\begin{aligned}
(\partial_{t}\nabla u_{T}, \nabla u_{T} \abs{\nabla u_{T}}^{p-2})_{L^2}&+(\nabla (u_{S}\cdot \nabla u_{T}), \nabla u_{T} \abs{\nabla u_T}^{p-2})_{L^2} \\
&+(\nabla (u_{S} \cdot \hat{x}),\nabla u_{T} \abs{\nabla u_{T}}^{p-2})_{L^2}=(-\nabla z, \nabla u_{T} \abs{\nabla u_{T}}^{p-2})_{L^2}
\end{aligned}
\end{equation*}
The first term on the left-hand side is
\[ (\partial_{t}\nabla u_{T}, \nabla u_{T } \abs{\nabla u_{T}}^{p-2})_{L^2} = \frac{1}{p}\frac{d}{d t} \norm{\nabla u_{T}}^{p}_{L^{p}}.\]
We rewrite the second term as
\begin{equation}\label{rewrite}
(\nabla (u_{S}\cdot \nabla u_{T}), \nabla u_{T} \abs{\nabla u_{T}}^{p-2})_{L^2}=((\nabla u_{S}\cdot\nabla)u_{T},\nabla u_{T} \abs{\nabla u_{T}}^{p-2})_{L^2}+\frac{1}{p}(u_S, \nabla (\abs{\nabla u_{T}}^{p}))_{L^2}.
\end{equation}
Therefore, we obtain
\begin{align*}
 \frac{1}{p}\frac{d}{d t} \norm{\nabla u_{T}}^{p}_{L^{p}}= &-((\nabla u_{S}\cdot\nabla u_{T}),\nabla u_{T}\abs{\nabla u_{T}}^{p-2})_{L^2}-(\nabla (u_{S} \cdot\hat{x}),\nabla u_{T} \abs{\nabla u_{T}}^{p-2})_{L^2} \\
&-(\nabla z, \nabla u_{T} \abs{\nabla u_{T}}^{p-2})_{L^2}, 
\end{align*}
where we have taken into account that the second term in the right-hand side of \eqref{rewrite} vanishes after integration by parts. Now using H\"older and Young's inequality we derive
\begin{align}\label{finalLPV}
\frac{1}{p}\frac{d}{d t} \norm{\nabla u_{T}}^{p}_{L^{p}} &\lesssim  \norm{\nabla u_{S}}_{L^{\infty}}\norm{\nabla u_{T}}^{p}_{L^{p}}+\norm{\nabla u_{S}}_{L^{\infty}}\norm{\nabla u_{T}}^{p-1}_{L^{p}}+\norm{\nabla u_{T}}^{p-1}_{L^{p}} \nonumber \\
&\lesssim  \left(\norm{\nabla u_{S}}_{L^{\infty}}+1\right)\left( \norm{\nabla u_{T}}^{p}_{L^{p}}+1\right).
\end{align}
Proceeding as before, we can obtain a similar estimate for $\norm{\nabla \theta_{S}}_{L^{p}}$ in the form 
\begin{equation}\label{Lptheta}
\frac{1}{p}\frac{d}{d t} \norm{\nabla \theta_{S}}^{p}_{L^{p}} \lesssim \norm{\nabla u_{S}}_{L^{\infty}}\norm{ \nabla \theta_{S}}^{p}_{L^{p}}+\frac{1}{p}\norm{\nabla \theta_{S}}^{p-1}_{L^{p}}+ \frac{1}{p}\norm{\nabla u_{T}}^{p}_{L^{p}}.
\end{equation}
Putting together \eqref{finalLPV} and \eqref{Lptheta} we conclude
\[\frac{d}{d t} (\norm{ \nabla \theta_{S}}^{p}_{L^{p}}+\norm{\nabla u_{T}}^{p}_{L^{p}}) \lesssim p \left(\norm{\nabla u_{S}}_{L^{\infty}}+ 1\right)\left(\norm{\nabla \theta_{S}}^{p}_{L^{p}}+\norm{\nabla u_{T}}^{p}_{L^{p}}+1\right).\] 
It is important to note that the constant appearing implicitly in the above inequality does not depend on $p$. Thus, by applying Gr\"onwall's inequality
\[ \norm{\nabla \theta_{S}}^{p}_{L^{p}}+\norm{ \nabla u_{T}}^{p}_{L^{p}} \lesssim \left(\norm{\nabla \theta^{0}_{S}}^{p}_{L^{p}}+\norm{\nabla u^{0}_{T}}^{p}_{L^{p}}\right)\text{ exp} \left( p\int_{0}^{t} (\norm{\nabla u_{S}(\tau)}_{L^{\infty}}+1)\diff \tau\right),\]
which gives
\begin{equation}\label{eq:before:limit}
\norm{\nabla \theta_{S}}_{L^{p}}+\norm{ \nabla u_{T}}_{L^{p}} \lesssim \left(\norm{\nabla \theta^{0}_{S}}_{L^{p}}+\norm{\nabla u^{0}_{T}}_{L^{p}}\right)\text{ exp} \left( \int_{0}^{t} (\norm{\nabla u_{S}(\tau)}_{L^{\infty}}+1)\diff \tau\right).
\end{equation}
Finally, by taking limits in \eqref{eq:before:limit}, we get
\begin{align} 
\norm{\nabla \theta_{S}}_{L^{\infty}} + \norm{\nabla u_{T}}_{L^{\infty}} &= \displaystyle\lim_{p\to\infty} (\norm{\nabla \theta_{S}}_{L^{p}}+\norm{\nabla u_{T}}_{L^{p}}) \nonumber \\
 \label{BKM2}&\lesssim \left(\norm{\nabla \theta^{0}_{S}}_{L^{\infty}}+\norm{\nabla u^{0}_{T}}_{L^{\infty}}\right)\text{ exp} \left( \int_{0}^{t} (\norm{\nabla u_{S}(\tau)}_{L^{\infty}}+1)\diff \tau\right). 
\end{align} \\
\textit{Step 4: Final stage and blow-up criterion.} To conclude the proof we just need to collect estimates \eqref{BKM1} and \eqref{BKM2} to notice that setting $E(t)=1+\norm{u_{S}}^2_{{H}^s}+\norm{u_{T}}^2_{{H}^s}+\norm{\theta_{S}}^2_{{H}^s}$, we have that
\begin{equation*}
\dot{E}(t) \lesssim (\norm{\nabla \theta^{0}_{S}}_{L^{\infty}}+\norm{\nabla u^0_{T}}_{L^{\infty}}) \text{ exp} \left(\int_{0}^{t} (\norm{\nabla u_S(\tau)}_{L^{\infty}}+1)\diff \tau\right)E(t)+(\norm{u_{S}}_{W^{1,\infty}}+1)\ E(t).
\end{equation*}
By the Sobolev embedding there exists a constant $C>0$ such that $\norm{\nabla u_{S}}_{L^{\infty}} \leq C E(t)$.  Consequently, using this fact and Gr\"onwall's inequality, we obtain the equivalence between \eqref{eq:X(t)toinfty} and \eqref{eq:utoinfty}. \\
\begin{remark}
We have derived the blow-up criterion theorem (Theorem \ref{blowupcriteria}) implicitly assuming that $u_{S},u_{T},\theta_{S} \in C([0,T];H^{s+1})$, although it is only guaranteed that $u_{S},u_{T},\theta_{S} \in C([0,T];H^{s})$ by Theorem \ref{mainth}. The estimates can be made rigorous via standard approximation procedure and a routinary convergence argument. We shall omit this part to avoid redundancy.
\end{remark}
\end{proof}

\section{Conclusions}\label{5}
In this paper we have established the local well-posedness of the Incompressible Slice Model (ISM) in the Sobolev space $H^{s}(\Omega)$, with $s>2$ being an integer. This model is intended for the study and simulation of atmospheric fronts, and numerical approximations of it are being used for this purpose, supporting the evidence that fronts can develop from certain initial conditions. Our result provides a first step for addressing other possible questions such as stability/instability phenomena, which we do by providing a characterisation of a broad class of equilibrium solutions of the ISM, and deriving formal and nonlinear stability conditions around them. We have also constructed a blow-up criterion, based on an analysis of the
$L^{p}$ norms of the gradients of $u_T$ and $\theta_S$. This criterion is useful for numerical simulations, since one can track the evolution of the $L^{\infty}$ norm of the gradient of the velocity $u_S$ to see whether solutions are likely to blow up in finite time. 

Notice that we have worked with the most general case $\Omega\subset\mathbb{R}^{2}$ an open domain with smooth boundary, since in the real world flows often move in bounded domains with constraints coming from the boundaries. Moreover, initial boundary value problems manifest quite a particular behavior and richer phenomena occur in contrast with a periodic domain $\mathbb{T}^{2}$ or the real plane $\mathbb{R}^{2}$. Therefore, studying the problem with this generality entails certain extra difficulties we have had to overcome.

\subsection{Outlook for further research}
There still remain many open questions which are left for future research. We would like to present some of these directions here:  \\ \\
\textit{Global existence of smooth solutions vs blow-up in the Incompressible Slice Model.} The problem of finite time blow-up formation for solutions of the Incompressible Slice Model is an intriguing open problem. Since this system of equations bears a resemblance to the 2D inviscid Boussinesq equation and therefore to the 3D axisymmetric Euler equation, this would be a major breakthrough.
\\ \\ 
\textit{Dissipative version of the Incompressible Slice Model.} It would be natural to study the Incompressible Slice Model with added viscosity, namely
\begin{align*}
  \partial_t u_S + u_S \cdot \nabla u_S 
-u_T{\hat{x}}
&= -\nabla p
 + \theta_S \hat{z} + \nu\Delta u_S,  \\
 \partial_t u_T + u_S \cdot\nabla u_T
+ u_S \cdot\hat{{x}}
&=   -z,  \\
\partial_t \theta_S + u_S \cdot\nabla\theta_S + u_T  &= 0,\\
\nabla\cdot u_S &=  0, 
\end{align*}
where $\nu>0$ is a positive constant. \\ \\
\textit{Stochastic version of the Incompressible Slice Model.}
An approach for including stochastic processes as multiplicative cylindrical noise in systems of PDEs was proposed in \cite{Principal}. Introduction of stochasticity in fluid systems can help account for two main factors:
\begin{enumerate}[1.]
\item The effect of small scale processes. \\
\item  The uncertainty coming from the numerical methods used to resolve these equations. 
\end{enumerate}
Recently, there have been several works studying the well-posedness of fluid dynamics equations with multiplicative cylindrical noise \cite{stoEuler2017,stoEuler2018,stochBouss2018,stoBurgers2018}. This approach can be also applied to the Incompressible Slice Model, yielding a new stochastic systems of equations
\begin{align*}
 \diff \omega_S + \mathcal{L}_{u_S} \omega_S \ \diff t+\displaystyle\sum_{i=1}^{\infty}  \mathcal{L}_{\xi_{i}} \omega_S \circ  \diff B^{i}_{t}&= \partial_{x} \theta_S \ \diff t +\partial_{z} u_T \ \diff t,  \\ 
  \diff u_T + \mathcal{L}_{u_S} u_T \ \diff t + u_S \cdot \hat{x} \ \diff t + \displaystyle\sum_{i=1}^{\infty}  \mathcal{L}_{\xi_{i}} u_T\circ \diff B^{i}_{t}&= -z \ \diff t, \\
 \diff \theta_S + \mathcal{L}_{u_S} \theta_S \ \diff t + \displaystyle\sum_{i=1}^{\infty}  \mathcal{L}_{\xi_{i}}\theta_S \circ \diff B^{i}_{t} + u_T \ \diff t &= 0, \\
 \nabla \cdot u_S = 0, \quad \nabla \cdot \xi_i&=0, \hspace{0.2cm} i=1,\ldots, \infty,
\end{align*}
where $\circ \diff B^i_t$ is understood as integration with respect to Brownian motion in the Stratonovich sense, and $\omega_S=\nabla^{\perp}\cdot u_S$ is the slice component of the vorticity. From this point, well-posedness of the stochastic problem can be investigated.

\section*{Acknowledgements} We are deeply indebted to Darryl Holm for many useful suggestions which have significantly improved this manuscript. The first author is has been partially supported by the grant  MTM2017-83496-P from the Spanish Ministry of Economy and Competitiveness and through the Severo Ochoa Programme for Centres of Excellence in R\&D€ (SEV-2015-0554). The second author has been supported by the Mathematics of Planet Earth Centre of Doctoral Training.

\newcommand{\etalchar}[1]{$^{#1}$}

\end{document}